\DeclareMathOperator{\Hom}{Hom}
\DeclareMathOperator{\Ext}{Ext}
\DeclareMathOperator{\Tor}{Tor}
\DeclareMathOperator{\tor}{t}
\DeclareMathOperator{\gr}{gr}
\DeclareMathOperator{\coker}{coker}
\DeclareMathOperator{\img}{im}
\DeclareMathOperator{\Img}{Im}
\DeclareMathOperator{\coim}{coim}
\DeclareMathOperator{\LL}{L}
\DeclareMathOperator{\RR}{R}
\DeclareMathOperator{\Aid}{Aid}
\DeclareMathOperator{\Ann}{Ann}
\DeclareMathOperator{\codim}{codim}
\DeclareMathOperator{\Tot}{Tot}
\DeclareMathOperator{\cp}{cp}
\DeclareMathOperator{\Proj}{Proj}
\newcommand{\tM}{{{\mathtt{M}}}}
\newcommand{\ttF}{{{\mathtt{F}}}}
\newcommand{\id}{{\mathrm{id}}}
\renewcommand{\phi}{{\varphi}}
\newcommand{\Id}{\mathrm{Id}}
\newcommand\cocoa{{\hbox{\rm C\kern-.13em o\kern-.07em C\kern-.13em o\kern-.15em A}}}
\newcommand\colorC{{\color{blue}C}}
\newcommand\colorA{{\color{darkgreen}A}}
\newcommand\colorR{{\color{brown}R}}
\definecolor{darkgreen}{rgb}{0.008,0.617,0.067}
\definecolor{brown}{rgb}{0.6,0.4,0.2}
\newcommand{\lklammer}[1]{      
  \ensuremath{\left\{ \rule{0pt}{#1} \right.}
}
\newcommand{\rklammer}[1]{
  \ensuremath{\left\} \rule{0pt}{#1} \right.}
}
\newcommand{\jumpdir}[2]{\save[]+#2*{#1} \restore}
\newcommand{\kommentar}[1]{}
\begin{document}

\lstset{basicstyle=\tiny,
        frame=single,
        stringstyle=\ttfamily,
        showstringspaces=true}

\author{Mohamed Barakat}
\address{Department of mathematics, University of the Saarland, 66041 Saarbrücken, Germany}
\email{\href{mailto:Mohamed Barakat <barakat@math.uni-sb.de>}{barakat@math.uni-sb.de}}

\title{Spectral Filtrations via Generalized Morphisms}

\begin{abstract}
This paper introduces a reformulation of the classical convergence theorem for spectral sequences of filtered complexes which provides an algorithm to \emph{effectively} compute the induced filtration on the total (co)homology, as soon as the complex is of finite type, its filtration is finite, and the underlying ring is computable. So-called \emph{generalized maps} play a decisive role in simplifying and streamlining all involved algorithms.
\end{abstract}

\maketitle

\tableofcontents

\section{Introduction}

The motivation behind this work was the need for algorithms to explicitly construct several natural filtrations of modules. It is already known that all these filtrations can be described in a unified way using spectral sequences of filtered complexes, which in turn suggests a unified algorithm to construct all of them. Describing this algorithm is the main objective of the present paper.

Since {\sc Verdier} it became more and more apparent that one should be studying complexes of modules rather than single modules. A single module is then represented by one of its resolutions, all quasi-isomorphic to each other. The idea is now very simple:

\smallskip
\begin{center}
\framebox{
\begin{minipage}[c]{0.95\linewidth}
    If there is no direct way to construct a certain natural filtration on a module $M$, it might be simpler to explicitly realize $M$ as one of the (co)homologies $H_n(C)$ of some complex $C$ with some easy constructible (natural) filtration, such that the filtration induced on $H_n(C)$ (by the one on $C$) maps by the explicit isomorphism $H_n(C)\cong M$ onto the looked-for filtration on $M$.
\end{minipage}
 }
\end{center}

\smallskip
In this work it will be shown how to compute the induced filtration on $H_n(C)$ using spectral sequences of filtered complexes, enriched with some extra data. This provides a unified approach for constructing numerous important filtrations of modules and sheaves of modules (cf.~\cite[Chap.~5]{WeiHom} and \cite[Chap.~11]{Rot}). Since we are interested in effective computations we restrict ourself for simplicity to \emph{finite type} complexes carrying \emph{finite} filtrations.

\smallskip
When talking about $D$-modules the ring $D$ is assumed associative with one.

\begin{defn}[Filtered module]
  Let $M$ be a $D$-module.
  \begin{enumerate}
    \item[(a)] A chain of submodules $(F_p M)_{p\in\Z}$ of the module $M$ is called an \textbf{ascending filtration} if $F_{p-1} M \leq F_p M$. The $p$-th \textbf{graded part} is the subfactor module defined by $\gr_p M := F_p M / F_{p-1} M$.
    \item[(d)] A chain of submodules $(F^p M)_{p\in\Z}$ of the module $M$ is called a \textbf{descending filtration} if $F^p M \geq F^{p+1} M$. The $p$-th \textbf{graded part} is the subfactor module defined by $\gr^p M := F^p M / F^{p+1} M$.
  \end{enumerate}
All filtrations of modules will be assumed \textbf{exhaustive} (i.e.\  $\bigcup_p F_p M = M$), \textbf{Hausdorff} (i.e.\  $\bigcap_p F_p M= 0$), and will have \textbf{finite length} $m$ (i.e.\  the difference between the highest and the lowest stable index is at most $m$). Such filtrations are called $m$-step filtrations.
\end{defn}

We start with two examples that will be pursued in Section \ref{appl}:

\begin{enumerate}
  \item[\textbf{(d)}] Let $M$ and $N$ be right $D$-modules and $M^*:=\Hom_D(M,D)$ the dual (left) $D$-module of $M$. The map
    \[
      \phi:
        \left\{
          \begin{array}{ccc}
            N\otimes_D M^* &\to& \Hom_D(M,N) \\
            n \otimes \alpha &\mapsto& (m \mapsto n\alpha(m))
          \end{array}
        \right.
    \]
    is in general neither injective nor surjective. In fact, $\img\phi$ is the last (graded) part of a \textbf{d}escending filtration of $\Hom(M,N)$.
    \begin{equation*}\label{HomMN}
\xymatrix@C=3cm@R=0.62cm{
  &  *=0{\mbox{\small\textbullet}}
       \jumpdir{\Hom(M,N)}{/:a(-90) +1.3cm/}
       \ar@{-}[d]
  \\
  &  *=0{{\mbox{\small\textbullet}}}
       {\ar@{.}[d]}
  \\
  &  *=0{{\mbox{\small\textbullet}}}
       \ar@{-}[d]
  \\
  *=0{\mbox{\small\textbullet}}
  \ar@{-}[d] \jumpdir{N\otimes M^*}{/:a(190) +0.5cm/}
  \ar[r]
  & *=0{{\mbox{\small\textbullet}}}
      {\jumpdir{\rklammer{1.2cm}\coker\phi}{/:a(50)1.65cm/}}
      \ar@{-}[d]
  \\
  *=0{{\mbox{\small\textbullet}}}
  {\ar@{-}[d]}
  \jumpdir{\phi}{/:a(100) +1.6cm/}
  \ar[r]
  {\jumpdir{\phantom{\coim\phi \big\{}}{/:a(160)+1cm/}}
  {\jumpdir{\coim\phi \big\{}{/:a(161)+0.98cm/}}
  & *=0{\mbox{\small\textbullet}}
  {\jumpdir{\big\}\img\phi}{/:a(23)+0.85cm/}}
  \\
  *=0{{\mbox{\small\textbullet}}} {\jumpdir{\ker\phi \big\{}{/:a(160)+0.85cm/}}
}\end{equation*}
  \item[\textbf{(a)}] Dually, let $M$ be a left module, $L$ a right module, and
      \[
        \varepsilon:M \to M^{**}:=\Hom(\Hom(M,D),D)
      \]
      the \textbf{evaluation map}. The composition $\psi$
      \[
       \xymatrix@1{
         L\otimes_D M
         \ar@/_1pc/[rr]_{\psi}
         {\ar[r]^<(.25){\id\otimes\varepsilon}} &
         {L\otimes M^{**} \ar[r]^>(.75)\phi} &
         \Hom_D(M^*,L)
       }
    \]
    is in general neither injective nor surjective. It will turn out that its coimage $\coim\psi$ is the last graded part of an \textbf{a}scending filtration of $L\otimes M$.
    \begin{equation*}\label{MN}
\xymatrix@C=3cm@R=0.62cm{
  &  *=0{\mbox{\small\textbullet}}
       \jumpdir{\Hom(M^*,L)}{/:a(-80) +1.3cm/}
       \ar@{-}[d]
  \\
  *=0{\mbox{\small\textbullet}}
  \ar@{-}[d] \jumpdir{L\otimes M}{/:a(190) +0.5cm/}
  \ar[r]
  & *=0{{\mbox{\small\textbullet}}}
      {\jumpdir{\big\}\coker\psi}{/:a(20)1.07cm/}}
      \ar@{-}[d]
  \\
  *=0{{\mbox{\small\textbullet}}}
  {\ar@{-}[d]}
  \jumpdir{\psi}{/:a(100) +1.6cm/}
  \ar[r]
  {\jumpdir{\phantom{\coim\psi \big\{}}{/:a(160)+1cm/}}
  {\jumpdir{\coim\psi \big\{}{/:a(161)+0.98cm/}}
  & *=0{\mbox{\small\textbullet}}
  {\jumpdir{\big\}\img\psi}{/:a(22)+0.85cm/}}
  \\
  *=0{{\mbox{\small\textbullet}}}
  {\jumpdir{\ker\psi \lklammer{0.98cm}}{/:a(200)+0.9cm/}}
  {\ar@{.}[d]}
  \\
  *=0{{\mbox{\small\textbullet}}}
  \ar@{-}[d]
  \\
  *=0{\mbox{\small\textbullet}}
}\end{equation*}
\end{enumerate}

Example \textbf{(a)} has a geometric interpretation.

\begin{enumerate}
  \item[\textbf{(a')}] Let $D$ be a commutative {\sc Noether}ian ring with $1$. Recall that the {\sc Krull} dimension  $\dim D$ is defined to be the length $d$ of a maximal chain of prime ideals $D>\mathfrak{p}_0> \cdots > \mathfrak{p}_d$. For example, the \textbf{{\sc Krull} dimension} of a field $k$ is zero, $\dim \Z = 1$, and $\dim D[x_1,\ldots,x_n]=\dim D+n$.
  
  The definition of the {\sc Krull} dimension is then extended to nontrivial $D$-modules using
  \[
    \dim M := \dim \frac{D}{\Ann_D(M)}.
  \]
  Define the \textbf{codimension} of a nontrivial module $M$ as
  \[
    \codim M := \dim D - \dim M
  \]
  and set the codimension of the zero module to be $\infty$. If for example $D$ is a (commutative) principal ideal domain which is not a field, then the finitely generated $D$-modules of codimension $1$ are precisely the finitely generated torsion modules.
  \begin{defn}[Purity filtration]
    Let $D$ be a commutative {\sc Noether}ian ring with $1$ and $M$ a $D$-module. Define the submodule $\tor_{-c}M$ as the biggest submodule of $M$ of codimension $\geq c$. The \emph{ascending} filtration
    \[
      \cdots \leq \tor_{-(c+1)}M \leq \tor_{-c}M \leq \cdots \leq \tor_{-1} M  \leq \tor_0 M := M
    \]
    is called the \textbf{purity filtration} of $M$ \cite[Def.~1.1.4]{HL}. The graded part $M_c:=\tor_{-c}/\tor_{-(c+1)}$ is \textbf{pure} of codimension $c$, i.e.\  any nontrivial submodule of $M_c$ has codimension $c$. $\tor_{-1} M$ is nothing but the torsion submodule $\tor(M)$. This suggests calling $\tor_{-c} M$ the \textbf{$c$-th (higher) torsion submodule} of $M$.
  \end{defn}
 
Early references to the purity filtration are {\sc J.-E.~Roos}'s pioneering paper \cite{Roos} where he introduced the \textbf{bidualizing complex}, {\sc M.~Kashiwara}'s master thesis (December~1970) \cite[Theorem 3.2.5]{Kash} on algebraic $D$-modules, and {J.-E.~Björk}'s standard reference \cite[Chap.~2, Thm.~4.15]{Bjo}. All these references address the construction of this filtration from a homological\footnote{{\sc Kashiwara} did not use spectral sequences: ``Instead of using spectral sequences, Sato devised [...] a method using associated cohomology'', \cite[Section~3.2]{Kash}.} point of view, where the assumption of commutativity of the ring $D$ can be dropped.

Under some mild conditions on the \emph{not} necessarily commutative ring $D$ one can characterize the purity filtration in the following way: There exist so-called \textbf{higher evaluation maps} $\varepsilon_c$, generalizing the standard evaluation map, such that the sequence
\[
  0 \xrightarrow{} \tor_{-(c+1)}M \xrightarrow{} \tor_{-c} M \xrightarrow{\varepsilon_c} \Ext^c_D(\Ext^c_D(M,D),D)
\]
is exact (cf.~\cite{AB,QEB}). $\varepsilon_c$ can thus be viewed as a \textbf{natural transformation} between the \textbf{$c$-th torsion functor} $\tor_{-c}$ and the \textbf{$c$-th bidualizing functor} $\Ext^c(\Ext^c(-,D),D)$. In Subsection~\ref{bidual} it will be shown how to use spectral sequences of filtered complexes to construct all the higher evaluation maps $\varepsilon_c$. More generally it is evident that spectral sequences are natural birthplaces for many natural transformations.

Now to see the connection to the previous example (a) set $L=D$ as a right $D$-module. $\psi$ then becomes the evaluation map $\varepsilon$.
\end{enumerate}

There still exists a misunderstanding concerning spectral sequences of filtered complexes and it might be appropriate to address it here. Let $C$ be a filtered complex (cf.~Def.~\ref{filt_complex} and Remark~\ref{comp}). (*) We even assume $C$ of \emph{finite type} and the filtration \emph{finite}. The filtration on $C$ \emph{induces} a filtration on its (co)homologies $H_n(C)$. It is sometimes believed that the spectral sequence $E^r_{pq}$ associated to the filtered complex $C$ cannot be used to determine the induced filtration on $H_n(C)$, but can only be used to determine its graded parts $\gr_p H_n(C)$. One might be easily led to this conclusion since the last page of the spectral sequence consists of precisely these graded parts $E^\infty_{pq}=\gr_p H_{p+q}(C)$, and computing the last page is traditionally regarded as the last step in determining the spectral sequence. It is clear that even the knowledge of the total (co)homology $H_n(C)$ as a whole (along with the knowledge of the graded parts $\gr_p H_n(C)$) is in general \emph{not} enough to determine the filtration. Another reason might be the use of the phrase ``computing a spectral sequence''. Very often this means a successful attempt to figure out the morphisms on some of the pages of the spectral sequence, or even better, working skillfully around determining most or even all of these morphisms and nevertheless deducing enough or even all information about of the last page $E^\infty$. This often makes use of ingenuous arguments only valid in the example or family of examples under consideration. For this reason we add the word \textbf{effective} to the above phrase, and by ``effectively computing the spectral sequence'' we mean \emph{explicitly} determining \emph{all} morphisms on \emph{all} pages of the spectral sequence. Indeed, the definition one finds in standard textbooks like \cite[Section~5.4]{WeiHom} of the spectral sequence associated to a complex of \emph{finite type} carrying a \emph{finite} filtration \emph{is constructive} in the sense that it can be implemented on a computer (see \cite{homalg-package}). The message of this work is the following:

\smallskip
\begin{center}
\framebox{
\begin{minipage}[c]{0.9\linewidth}
    If the spectral sequence of a filtered complex is effectively computable, then, with some extra work, the induced filtration on the total (co)homology is effectively computable as well.
\end{minipage}
 }
\end{center}

\smallskip
By definition, the objects $E^r_{pq}$ of the spectral sequence associated to the filtered complex $C$ are subfactors of the total object $C_{p+q}$ (see Sections~\ref{les} and \ref{filt}). In Section~\ref{genmor} we introduce the notion of a \textbf{generalized embedding} to keep track of this information. The central idea of this work is to use the generalized embeddings $E^\infty_{pq} \to C_{p+q}$ to filter the total (co)homology $H_{p+q}(C)$ --- also a subfactor of $C_{p+q}$. This is the content of Theorem~\ref{mainthm}.

Effectively computing the induced filtration is not a main stream application of spectral sequences. Very often, especially in topology, the total filtered complex is not completely known, or is of \emph{infinite} type, although the (total) (co)homology is known to be of finite type. But from some page on, the objects of the spectral sequence become \emph{intrinsic} and of \emph{finite type}. Pushing the spectral sequence to convergence and determining the isomorphism type of the low degree total (co)homologies is already highly nontrivial. The reader is referred to \cite{RS} and the impressive program {\tt Kenzo} \cite{kenzo}. In its current stage, {\tt Kenzo} is able to compute $A_\infty$-structures on cohomology. The goal here is nevertheless of different nature, namely to effectively compute the induced filtration on the \emph{a priori known} (co)homology. The shape of the spectral sequence starting from the \emph{intrinsic} page will also be used to define new numerical invariants of modules and sheaves of modules (cf.~Subsection~\ref{codegree}).

\smallskip
The approach favored here makes extensive use of \textbf{generalized maps}, a concept motivated in Section~\ref{les}, introduced in Section~\ref{genmor}, and put into action starting from Section~\ref{filt}.

\smallskip
\begin{center}
\framebox{
\begin{minipage}[c]{0.9\linewidth}
Generalized maps can be viewed as a \emph{data structure} that allows \emph{reorganizing} many algorithms in homological algebra as \emph{closed formulas}.
\end{minipage}
 }
\end{center}

\medskip
Although the whole theoretical content of this work can be done over an abstract abelian category, it is sometimes convenient to be able to refer to elements. The discussion in \cite[p.~203]{Har} explains why this can be assumed without loss of generality.

\section{A generality on subobject lattices}

The following situation will be repeatedly encountered in the sequel. Let $C$ be an object in an abelian category, $Z$, $B$, and $A$ subobjects with $B\leq Z$. Then the subobject lattice\footnote{I learned drawing these pictures from Prof.~{\sc Joachim Neubüser}. He made intensive use of subgroup lattices in his courses on finite group theory to visualize arguments and even make proofs.} of $C$ is at most a \textbf{degeneration} of the one in Figure~\ref{ABZ}. 
\begin{figure}[htb]
  \begin{minipage}[c]{0.55\linewidth}
    \centering
    \psfrag{$C$}{$C$}
    \psfrag{$A$}{$A$}
    \psfrag{$B$}{$B$}
    \psfrag{$Z$}{$Z$}
    \psfrag{$A'$}{$A'$}
    \includegraphics[width=0.4\textwidth]{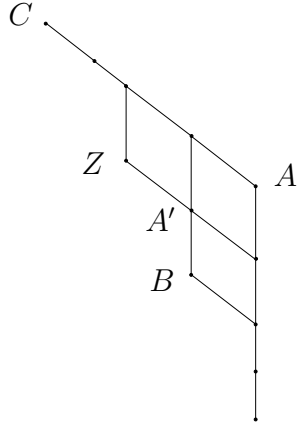}
  \end{minipage}
  \caption{A general lattice with subobjects $B\leq Z$ and $A$}
  \label{ABZ}
\end{figure}

This lattice makes no statement about the ``size'' of $B$ or $Z$ compared to $A$, since, in general, neither $B$ nor $Z$ is in a $\leq$-relation with $A$. The \textbf{second\footnote{Here we follow the numbering in {\sc Emmy Noether}'s fundamental paper \cite{HomSatz}.} isomorphism theorem} can be applied ten times within this lattice, two for each of the five parallelograms. The subobject $A$ leads to the \textbf{intermediate subobject} $A':=(A+B)\cap Z$ sitting between $B$ and $Z$, which in general neither coincides with $Z$ nor with $B$. Hence, a $2$-step filtration $0\leq A \leq C$ leads to a $2$-step filtration $0 \leq A'/B \leq Z/B$. 

Arguing in terms of subobject lattices is a manifestation of the isomorphism theorems, all being immediate corollaries of the homomorphism theorem (cf.~\cite{HomSatz}).

\section{Long exact sequences as spectral sequences}\label{les}

Long exact sequences are in a precise sense a precursor of spectral sequences of filtered complexes. They have the advantage of being a lot easier to comprehend. The core idea around which this work is built can already be illustrated using long exact sequences, which is the aim of this section.

Long exact sequences often occur as the sequence connecting the homologies
\[
  \cdots \leftarrow{} {\color{darkgreen} H_{n-1}(A)} \xleftarrow{{\color{red}\partial_*}}  {\color{brown} H_n(R)}
  \xleftarrow{\nu_*} {\color{blue} H_n(C)} \xleftarrow{\iota_*}
  {\color{darkgreen} H_n(A)}\xleftarrow{{\color{red}\partial_*}} {\color{brown} H_{n+1}(R)} \xleftarrow{} \cdots
\]
of a \emph{short exact} sequence of complexes $0 \xleftarrow{} {\color{brown}R} \xleftarrow{\nu} {\color{blue}C} \xleftarrow{\iota} {\color{darkgreen}A} \xleftarrow{} 0$. If one views $(A,\partial_A)$ as a subcomplex of $(C,\partial)$, then $(R,\partial_R)$ can be identified with the quotient complex $C/A$. Moreover $\partial_A$ is then $\partial_{|A}$ and $\partial_R$ is boundary operator induced by $\partial$ on the quotient $R$. The natural maps $\partial_*$ appearing in the long exact sequence are the so-called connecting homomorphisms and are, like $\partial_A$ and $\partial_R$, induced by the boundary operator $\partial$ of the total complex $C$.

To see in which sense a long exact sequence is a special case of a spectral sequence of a filtered complex we first recall the definition of a filtered complex.

\begin{defn}[Filtered complex]\label{filt_complex}
  We distinguish between chain and cochain complexes:
  \begin{enumerate}
    \item[(a)] A chain of subcomplexes $(F_p C)_{p\in\Z}$ (i.e. $\partial(F_p C_n) \leq F_p C_{n-1}$ for all $n$)
      of the chain complex $(C_\bullet,\partial)$ is called an \textbf{ascending filtration}
      if $F_{p-1} C \leq F_p C$. The $p$-th \textbf{graded part} is the subfactor chain complex
      defined by $\gr_p C := F_p C / F_{p-1} C$.
    \item[(d)] A chain of subcomplexes $(F^p C^n)_{p\in\Z}$ (i.e. $\partial(F^p C^n) \leq F^p C^{n+1}$ for all $n$)
      of the \emph{co}chain complex $(C^\bullet,\partial)$ is called a \textbf{descending filtration}
      if $F^p C \geq F^{p+1} C$. The $p$-th \textbf{graded part} is the subfactor cochain complex
      defined by $\gr^p C := F^p C / F^{p+1} C$.
  \end{enumerate}
Like for modules all filtrations of complexes will be \textbf{exhaustive} (i.e.\  $\bigcup_p F_p C = C$), \textbf{Hausdorff} (i.e.\  $\bigcap_p F_p C= 0$), and will have \textbf{finite length} $m$ (i.e.\  the difference between the highest and the lowest stable index is at most $m$). Such filtrations are called $m$-step filtrations in the sequel.

Convention: For the purpose of this work filtrations on chain complexes are automatically ascending whereas on \emph{co}chain complexes descending.
\end{defn}

\begin{rmrk}
Before continuing with the previous discussion it is important to note that
  \begin{enumerate}
    \item[(a)] The filtration $(F_p C_n)$ of $C_n$ \emph{induces} an ascending filtration
      on the homology $H_n(C)$. Its $p$-th graded part is denoted by $\gr_p H_n(C)$.
    \item[(d)] The filtration $(F^p C^n)$ of $C^n$ \emph{induces} a descending filtration
      on the cohomology $H^n(C)$.  Its $p$-th graded parts is denoted by $\gr^p H^n(C)$.
  \end{enumerate}
  More precisely, $F_p H_n(C)$ is the image of the morphism $H_n(F_p C) \to H_n(C)$.
\end{rmrk}

A short exact sequence of (co)chain complexes $0 \xleftarrow{} {\color{brown}R} \xleftarrow{\nu} {\color{blue}C} \xleftarrow{\iota} {\color{darkgreen}A} \xleftarrow{} 0$ can be viewed as a $2$-step filtration $0\leq A \leq C$ of the complex $C$ with graded parts $A$ and $R$. Following the above convention the filtration is ascending or descending depending on whether $C$ is a chain or cochain complex.

The main idea behind long exact sequences is to relate the homologies of the total chain complex $C$ with the homologies of its graded parts $A$ and $R$. This precisely is also the idea behind spectral sequences of filtered complexes but generalized to $m$-step filtrations, where $m$ may now be larger than $2$. Roughly speaking, the spectral sequence of a filtered complex measures how far the graded part $\gr_p H_n(C)$ of the filtered $n$-th homology $H_n(C)$ of the total filtered complex $C$ is away from simply being the homology $H_n(\gr_p C)$ of the $p$-th graded part of $C$. This would for example happen if the filtration $F_p C$ is induced by its own grading\footnote{In the context of long exact sequences this would mean that the short exact sequence of complexes $0 \xleftarrow{} Q \xleftarrow{\nu} C \xleftarrow{\iota} T \xleftarrow{} 0$ splits.}, i.e.\  $F_p C = \bigoplus_{p' \leq p} \gr_p' C$, since then the homologies of $C$ will simply be the direct sum of the homologies of the graded parts $\gr_p C$. In general, $\gr_p H_n(C)$ will only be a \emph{subfactor} of $H_n(\gr_p C)$.

Long exact sequences do not have a direct generalization to $m$-step filtrations, $m>2$. The language of spectral sequences offers in this respect a better alternative. In order to make the transition to the language of spectral sequences notice that the graded parts $\coker(\iota_*)$ and $\ker(\nu_*)$ of the filtered total homology $H_n(C)$ indicated in the diagram below
\begin{equation}\label{LES}
  \xymatrix@R=0.4cm{
    {\color{darkgreen}H_{n-1}(A)}
    &
    \color{brown}H_n(R)
    {\ar[l]_<(.2){\color{red}\partial_*}}
    &
    {\color{blue}H_n(C)}
    {\ar[l]_<(.18){\nu_*}}
    &
    {\color{darkgreen}H_n(A)}
    {\ar[l]_<(.18){\iota_*}}
    &
    {\color{brown}H_{n+1}(R)}
    {\ar[l]_{\color{red}\partial_*}}
    \\
    *=0{{\color{darkgreen}\mbox{\tiny\textbullet}}}
    {\ar@{-}[d]}
    \\
    *=0{{\mbox{\tiny\textbullet}}}
    {\ar@{-}[d]}
    &
    *=0{{\color{brown}\mbox{\tiny\textbullet}}}
    {\ar@{-}[d]}
    {\color{red}\ar[l]}
    \\
    *=0{{\mbox{\tiny\textbullet}}}
    &
    *=0{{\mbox{\tiny\textbullet}}}
    {\ar@{-}[d]}
    {\ar[l]}
    {\jumpdir{\color{red}\partial_*}{/:a(-11)1.1cm/}}
    &
    *=0{\color{blue}\mbox{\tiny\textbullet}}
    \ar@{-}[d]
    {\ar[l]}
    \\
    &
    *=0{\mbox{{\tiny\textbullet}}}
    &
    *=0{{\mbox{\tiny\textbullet}}}
    \ar@{-}[d]
    {\ar[l]}
    {\jumpdir{\nu_*}{/:a(-13)1.07cm/}}
    {\jumpdir{\iota_*}{/:a(-17)-1.1cm/}}
    {\jumpdir{{\color{brown}\}}\mbox{ \tiny$\coker(\iota_*)$}}{/:a(19)-0.9cm/}}
    {\jumpdir{\mbox{\tiny$\ker(\nu_*)$ }{\color{darkgreen}\big\{}}{/:a(27)0.7cm/}}
    &
    *=0{{\color{darkgreen}\mbox{\tiny\textbullet}}}
    {\ar@{-}[d]}
    {\ar[l]}
    \\
    &
    &
    *=0{\mbox{\tiny\textbullet}}
    &
    *=0{{\mbox{\tiny\textbullet}}}
    {\ar@{-}[d]}
    {\ar[l]}
    &
    *=0{{\color{brown}\mbox{\tiny\textbullet}}}
    {\ar@{-}[d]}
    {\ar[l]}
    \\
    &
    &
    &
    *=0{{\mbox{\tiny\textbullet}}}
    &
    *=0{{\mbox{\tiny\textbullet}}}
    {\ar@{-}[d]}
    {\ar[l]}
    {\jumpdir{\color{red}\partial_*}{/:a(-15)1.1cm/}}
    \\
    &
    &
    &
    &
    *=0{{\mbox{\tiny\textbullet}}}
  }
\end{equation}
both have an alternative description in terms of the connecting homomorphisms:
\begin{equation}\label{iota_nu}
 \coker(\iota_*) \cong {{\color{brown}\ker}({\color{red}\partial_*}) \quad\quad \mbox{and} \quad \quad \ker(\nu_*) \cong \color{darkgreen}\coker}({\color{red}\partial_*}).
\end{equation}
These natural isomorphisms are nothing but the statement of the homomorphism theorem applied to $\iota_*$ and $\nu_*$.

Below we will give the definition of a spectral sequence and in Section \ref{filt} we will recall how to associate a spectral sequence to a filtered complex. But before doing so let us describe in simple words the rough picture, valid for general spectral sequences (even for those not associated to a filtered complex).

A spectral sequence can be viewed as a book with several pages $E^a$, $E^{a+1}$, $E^{a+2}$, $\ldots$ starting at some integer $a$. Each page contains a double array $E^r_{pq}$ of objects, arranged in an array of complexes. The pattern of arranging the objects in such an array of complexes depends only on the integer $a$ and is fixed by a common convention once and for all. The objects on page $r+1$ are the homologies of the complexes on page $r$. It follows that the object $E^r_{pq}$ on page $r$ are \emph{subfactors} of the objects $E^t_{pq}$ on \emph{all} the previous pages $t<r$.

Now we turn to the morphisms of the complexes. From what we have just been saying we know that at least the source and the target of a morphism on page $r+1$ are completely determined by page $r$. This can be regarded as a sort of restriction on the morphism, and indeed, in the case when zero is the only morphism from the given source to the given target, the morphism then becomes uniquely determined. This happens for example whenever either the source or the target vanishes, but may happen of course in other situations ($\Hom_\Z(\Z/2\Z,\Z/3\Z)=0$). So now it is natural to ask whether page $r$ or any of its previous pages impose further restrictions on the morphisms on page $r+1$, apart from determining their sources and targets. The answer is, in general, no. This will become clear as soon as we construct the spectral sequence associated to a $2$-step filtered complex below (or more generally for an $m$-step filtration in Section \ref{filt}) and understand the nature of data on each page.

Summing up: Taking homology only determines the objects of the complexes on page $r+1$, but not their morphisms. Choosing these morphisms not only completes the $(r+1)$-st page, but again determines the objects on the $(r+2)$-nd page. Iterating this process finally defines a spectral sequence.

Typically, in applications of spectral sequences there exists a natural choice of the morphisms on the successive pages. This is illustrated in the following example, where we associate a spectral sequence to a $2$-filtered complex. But first we recall the definition of a spectral sequence.

\begin{defn}[Homological spectral sequence]
  A \textbf{homological spectral sequence} (starting at $r_0$) in an abelian category $\mathcal{A}$ consists of
  \begin{enumerate}
    \item Objects $E^r_{pq} \in \mathcal{A}$, for $p,q,r\in\Z$ and $r \geq r_0\in \Z$; arranged as a sequence (indexed by $r$) of lattices (indexed by $p,q$);
    \item Morphisms $\partial^r_{pq}:E^r_{pq} \to E^r_{p-r,q+r-1}$ with $\partial^r \partial^r=0$, i.e. the sequences of slope $-\frac{r+1}{r}$ in $E^r$ form a chain complex;
    \item Isomorphisms between $E^{r+1}_{pq}$ and the homology $\ker \partial^r_{pq}/\img \partial^r_{p+r,q-r+1}$ of $E^r$ at the spot $(p,q)$.
  \end{enumerate}
  $E^r$ is called the $r$-th \textbf{sheet} (or \textbf{page}, or \textbf{term}) of the spectral sequence.
\end{defn}
Note that $E^{r+1}_{pq}$ is by definition (isomorphic to) a subfactor of $E^r_{pq}$. $p$ is called the \textbf{filtration degree} and $q$ the \textbf{complementary degree}. The sum $n=p+q$ is called the \textbf{total degree}. A morphism with source of total degree $n$, i.e.\  on the $n$-th diagonal, has target of degree $n-1$, i.e. on the $(n-1)$-st diagonal. So the total degree is \emph{decreased} by one.

\begin{figure}[htb]
  \begin{minipage}[c]{1\linewidth}
\[
  \xymatrix{
    *=0{}
     \jumpdir{q}{/:a(90).5cm/}
    &
    E^2_{02}
    \ar@{.}[rd]
    &
    E^2_{12}
    \ar@{.}[rd]
    &
    E^2_{22}
    \\
    &
    E^2_{01}
    \ar@{.}[rd]
    &
    E^2_{11}
    \ar@{.}[rd]
    &
    E^2_{21}
    {\ar[llu]_{{\color{red}\partial}}}
    \\
    &
    E^2_{00}
    &
    E^2_{10}
    &
    E^2_{20}
    {\ar[llu]_{{\color{red}\partial}}}
    \\
    *=0{} \ar[uuu] \ar[rrr]
    &&&
    *=0{}
    \jumpdir{p}{/:a(-100).5cm/}
  }
\]
\end{minipage}
  \caption{$E^2$}
  \label{E_2_with_arrows}
\end{figure}

\begin{defn}[Cohomological spectral sequence]
  A \textbf{cohomological spectral seq\-uence} (starting at $r_0$) in an abelian category $\mathcal{A}$ consists of
  \begin{enumerate}
    \item Objects $E_r^{pq} \in \mathcal{A}$, for $p,q,r\in\Z$ and $r \geq r_0\in \Z$; arranged as a sequence (indexed by $r$) of lattices (indexes by $p,q$);
    \item Morphisms $d_r^{pq}:E_r^{pq} \to E_r^{p+r,q-r+1}$ with $d_r d_r=0$, i.e. the sequences of slope $-\frac{r+1}{r}$ in $E_r$ form a cochain complex;
    \item Isomorphisms between $E_{r+1}^{pq}$ and the cohomology of $E_r$ at the spot $(p,q)$.
  \end{enumerate}
  $E_r$ is called the $r$-th \textbf{sheet} of the spectral sequence.
\end{defn}
Here the total degree $n=p+q$ is \emph{increased} by one. Reflecting a cohomological spectral sequence at the origin $(p,q)=(0,0)$, for example, defines a homological one $E^r_{pq}=E_r^{-p,-q}$, and vice versa. For more details and terminology (\textbf{boundedness}, \textbf{convergence}, \textbf{fiber terms}, \textbf{base terms}, \textbf{edge homomorphisms}, \textbf{collapsing}, \textbf{$E^\infty$ term}, \textbf{regularity}) see \cite[Section~5.2]{WeiHom}.

Part of the data we have in the context of long exact sequences can be put together to construct a spectral sequence with three pages $E^0$, $E^1$, and $E^2$:
\[
\xymatrix{
  E^{0}_{pq}:
  &
  {\color{darkgreen}A_n}
  \ar@{.}[rd]
  &
  {\color{brown}R_{n+1}}
  \\
  &
  {\color{darkgreen}A_{n-1}}
  \ar@{.}[rd]
  &
  {\color{brown}R_n}
  \\
  &
  {\color{darkgreen}A_{n-2}}
  &
  {\color{brown}R_{n-1}}
}
\xymatrix{
&& \\
& \ar@{~>}[r]^{\textrm{add the}}_{\textrm{arrows}} &
}
\xymatrix{
  E^{0}_{pq}:
  &
  {\color{darkgreen}A_n}
  {\ar[d]^{\partial_\colorA}}
  \ar@{.}[rd]
  &
  {\color{brown}R_{n+1}}
  {\ar@{}[d]^{\phantom{\partial_\colorR}}}
  {\ar[d]^{\partial_\colorR}}
  \\
  &
  {\color{darkgreen}A_{n-1}}
  {\ar[d]^{\partial_\colorA}}
  \ar@{.}[rd]
  &
  {\color{brown}R_n}
  {\ar[d]^{\partial_\colorR}}
  \\
  &
  {\color{darkgreen}A_{n-2}}
  &
  {\color{brown}R_{n-1}}
}
\]
\[
\xymatrix{
& \ar@{~>}[dl]^{\textrm{homology}}_{\textrm{take}} \\
\mbox{\phantom{$1$}} &
}
\]
\[
\xymatrix{
  E^{1}_{pq}:
  &
  \ar@{.}[rd]
  {\color{darkgreen}H_n(A)}
  &
  {\color{brown}H_{n+1}(R)}
  \\
  &
  \ar@{.}[rd]
  {\color{darkgreen}H_{n-1}(A)}
  &
  {\color{brown}H_n(R)}
  \\
  &
  {\color{darkgreen}H_{n-2}(A)}
  &
  {\color{brown}H_{n-1}(R)}
}
\xymatrix{
&& \\
& \ar@{~>}[r]^{\textrm{add the}}_{\textrm{arrows}} &
}
\xymatrix{
  E^{1}_{pq}:
  &
  \ar@{.}[rd]
  {\color{darkgreen}H_n(A)}
  &
  {\color{brown}H_{n+1}(R)}
  {\ar[l]_{\color{red}\partial_*}}
  \\
  &
  \ar@{.}[rd]
  {\color{darkgreen}H_{n-1}(A)}
  &
  {\color{brown}H_n(R)}
  {\ar[l]_{\color{red}\partial_*}}
  \\
  &
  {\color{darkgreen}H_{n-2}(A)}
  &
  {\color{brown}H_{n-1}(R)}
  {\ar[l]_{\color{red}\partial_*}}
}
\]
\[
\xymatrix{
& \ar@{~>}[dl]^{\textrm{homology}}_{\textrm{take}} \\
\mbox{\phantom{$1$}} &
}
\]
\[
\xymatrix{
  E^{2}_{pq}:
  &
  \ar@{.}[rd]
  {{\color{darkgreen}\coker}({\color{red}\partial_*})}
  &
  {{\color{brown}\ker}({\color{red}\partial_*})}
  \\
  &
  \ar@{.}[rd]
  {{\color{darkgreen}\coker}({\color{red}\partial_*})}
  &
  {{\color{brown}\ker}({\color{red}\partial_*})}
  \\
  &
  {{\color{darkgreen}\coker}({\color{red}\partial_*})}
  &
  {{\color{brown}\ker}({\color{red}\partial_*})}
}
\xymatrix{
&& \\
& \ar@{~>}[r]^{\textrm{no arrows}}_{\textrm{to add}} &
}
\xymatrix{
  E^{2}_{pq}:
  &
  \ar@{.}[rd]
  {{\color{darkgreen}\coker}({\color{red}\partial_*})}
  &
  {{\color{brown}\ker}({\color{red}\partial_*})}
  \\
  &
  \ar@{.}[rd]
  {{\color{darkgreen}\coker}({\color{red}\partial_*})}
  &
  {{\color{brown}\ker}({\color{red}\partial_*})}
  \\
  &
  {{\color{darkgreen}\coker}({\color{red}\partial_*})}
  &
  {{\color{brown}\ker}({\color{red}\partial_*})}
}
\]
with $p,q\in\Z$, $n=p+q$. Taking the two columns over $p=0$ and $p=1$, for example, is equivalent to setting $F_{-1} C:=0$, $F_0 C:=A$, and $F_1 C:=C$.

Several remarks are in order. First note that all the arrows in the above spectral sequence are induced by $\partial$, the boundary operator of the total complex $C$. Since $\partial$ respects the filtration, i.e. $\partial(F_p C) \leq F_p C$, the induced map $\bar{\partial}:F_p C \to C/F_p C$ vanishes. So respecting the filtration means that $\partial$ cannot carry things up in the filtration. But since $\partial$ does not necessarily respect the grading induced by the filtration it may very well carry things down one or more levels. Now we can interpret the pages: $E^0$ consists of the graded parts $\gr_p C$ with boundary operators $\partial_A$ and $\partial_Q$ chopping off all what $\partial$ carries down in the filtration. $E^1$ describes what $\partial$ carries down exactly one level. This interpretation of the connecting homomorphisms $\partial_*$ puts them on the same conceptual level as $\partial_A$ and $\partial_Q$. Finally, $E^2$ describes what $\partial$ carries exactly two levels down, but since a $2$-step filtration has two levels it should now be clear why $E^2$ does not have arrows.

Second, as we have seen in (\ref{iota_nu}) using the homomorphism theorem, the objects of the last page $E^2$ can be naturally identified with the graded parts $\gr_p H_n(C)$ of the filtered total homology $H_n(C)$. And since the objects on each page are subfactors of the objects on the previous pages one can view the above spectral sequence as a process successively approximating the graded parts $\gr_p H_n(C)$ of the filtered total homology ${\color{blue}H_n(C)}$:
\[
  ({\color{darkgreen}A_n},{\color{brown}R_n}) \leadsto
  ({\color{darkgreen}H_n(A)},{\color{brown}H_n(R)}) \leadsto
  ({\color{darkgreen}\coker}({\color{red}\partial_*}),{\color{brown}\ker}({\color{red}\partial_*})).
\]
The approximation is achieved by successively taking deeper inter-level interaction into account.

Finally one can ask if the spectral sequence above captured all the information in the long exact sequence. The answer is \emph{no}. The long exact sequence additionally contains the short exact sequence
\begin{equation}\label{extension}
  0 \xleftarrow{} {\color{brown}\ker}({\color{red}\partial_*})
  \xleftarrow{\nu_*} {\color{blue}H_n(C)} \xleftarrow{\iota_*}
  {\color{darkgreen}\coker}({\color{red}\partial_*}) \xleftarrow{} 0,
\end{equation}
explicitly describing the total homology ${\color{blue}H_n(C)}$ as an extension of its graded parts
${\color{darkgreen}\coker}({\color{red}\partial_*})$ and ${\color{brown}\ker}({\color{red}\partial_*})$.

Looking to what happens inside the subobject lattice of ${\color{blue}C_n}$ during the approximation process will help understanding how to remedy this defect.

\begin{figure}[htb]
  \begin{minipage}[c]{1\linewidth}
    \centering
    \psfrag{$C_n$}{$\color{blue}C_n$}
    \psfrag{$Z_n(R)$}{$\color{brown}Z_n(R)$}
    \psfrag{$B_n(R)$}{$\color{brown}B_n(R)$}
    \psfrag{$A_n$}{$\color{darkgreen}A_n$}
    \psfrag{$Z_n(A)$}{$\color{darkgreen}Z_n(A)$}
    \psfrag{$B_n(A)$}{$\color{darkgreen}B_n(A)$}
    \psfrag{$Z_n(C)$}{$Z_n(C)$}
    \psfrag{$B_n(C)$}{$B_n(C)$}
    \psfrag{$H_n(C)$}{$\color{blue}H_n(C)$}
    \psfrag{$ker$}{$\cong{\color{brown}\ker}({\color{red}\partial_*})$}
    \psfrag{$coker$}{$\cong{\color{darkgreen}\coker}({\color{red}\partial_*})$}
    \includegraphics[width=0.5\textwidth]{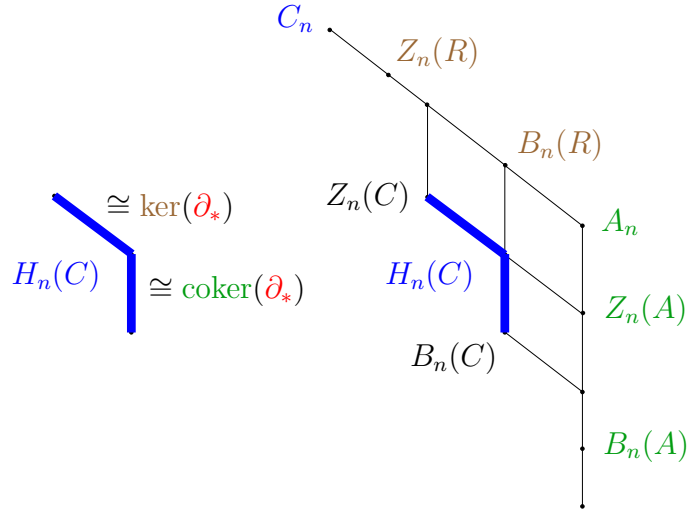}
\end{minipage}
 \caption{The $2$-step filtration $0 \leq \colorA \leq \colorC$ and the induced
   $2$-step filtration on $\color{blue}H_*(C)$}
  \label{LongExactSeq}
\end{figure}

Figure~\ref{LongExactSeq} shows the $n$-th object ${\color{blue}C_n}$ in the chain complex together with the subobjects that define the different homologies: ${\color{brown}H_n(R)}:={\color{brown}Z_n(R)}/{\color{brown}B_n(R)}$, ${\color{darkgreen}H_n(A)}:={\color{darkgreen}Z_n(A)}/{\color{darkgreen}B_n(A)}$, and ${\color{blue}H_n(C)}:={\color{blue}Z_n(C)}/{\color{blue}B_n(C)}$. Here we replaced ${\color{brown}Z_n(R)}$ and ${\color{brown}B_n(R)}$ by their full preimages in ${\color{blue}C_n}$ under the canonical epimorphism ${\color{blue}C_n} \xrightarrow{\nu} {\color{brown}R_n}:={\color{blue}C_n}/{\color{darkgreen}A_n}$.

\begin{figure}[htb]
  \begin{minipage}[c]{0.4\linewidth}
    \centering
    \psfrag{$C_n$}{$\color{blue}C_n$}
    \psfrag{$E^0_{1,n-1}:=R_n$}{${\color{brown}E^0_{1,n-1}}={\color{brown}R_n}$}
    \psfrag{$E^0_{0,n}:=A_n$}{${\color{darkgreen}E^0_{0,n}}={\color{darkgreen}A_n}$}
    \psfrag{$H_n(C)$}{$\color{blue}H_n(C)$}
    \includegraphics[width=0.4\textwidth]{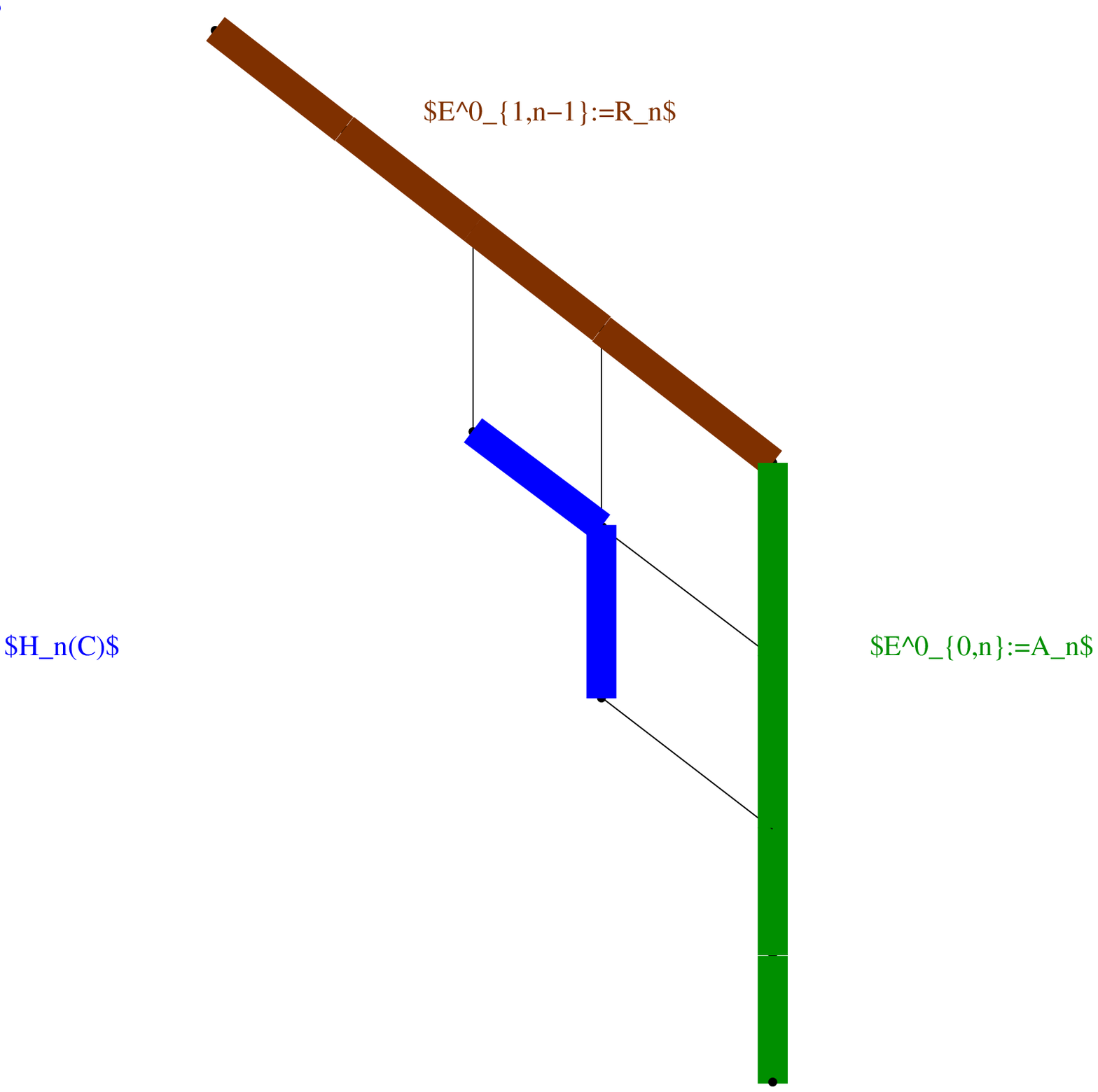}
    \caption{$E^0$}
    \label{E^0}
  \end{minipage}
  \quad $\leadsto$
  \begin{minipage}[c]{0.4\linewidth}
    \centering
    \psfrag{$C_n$}{$\color{blue}C_n$}
    \psfrag{$A_n$}{$\color{darkgreen}A_n$}
    \psfrag{$E^1_{1,n-1}:=H_n(R)$}{${\color{brown}E^1_{1,n-1}}={\color{brown}H_n(R)}$}
    \psfrag{$E^1_{0,n}:=H_n(A)$}{${\color{darkgreen}E^1_{0,n}}={\color{darkgreen}H_n(A)}$}
    \psfrag{$H_n(C)$}{$\color{blue}H_n(C)$}
    \includegraphics[width=0.4\textwidth]{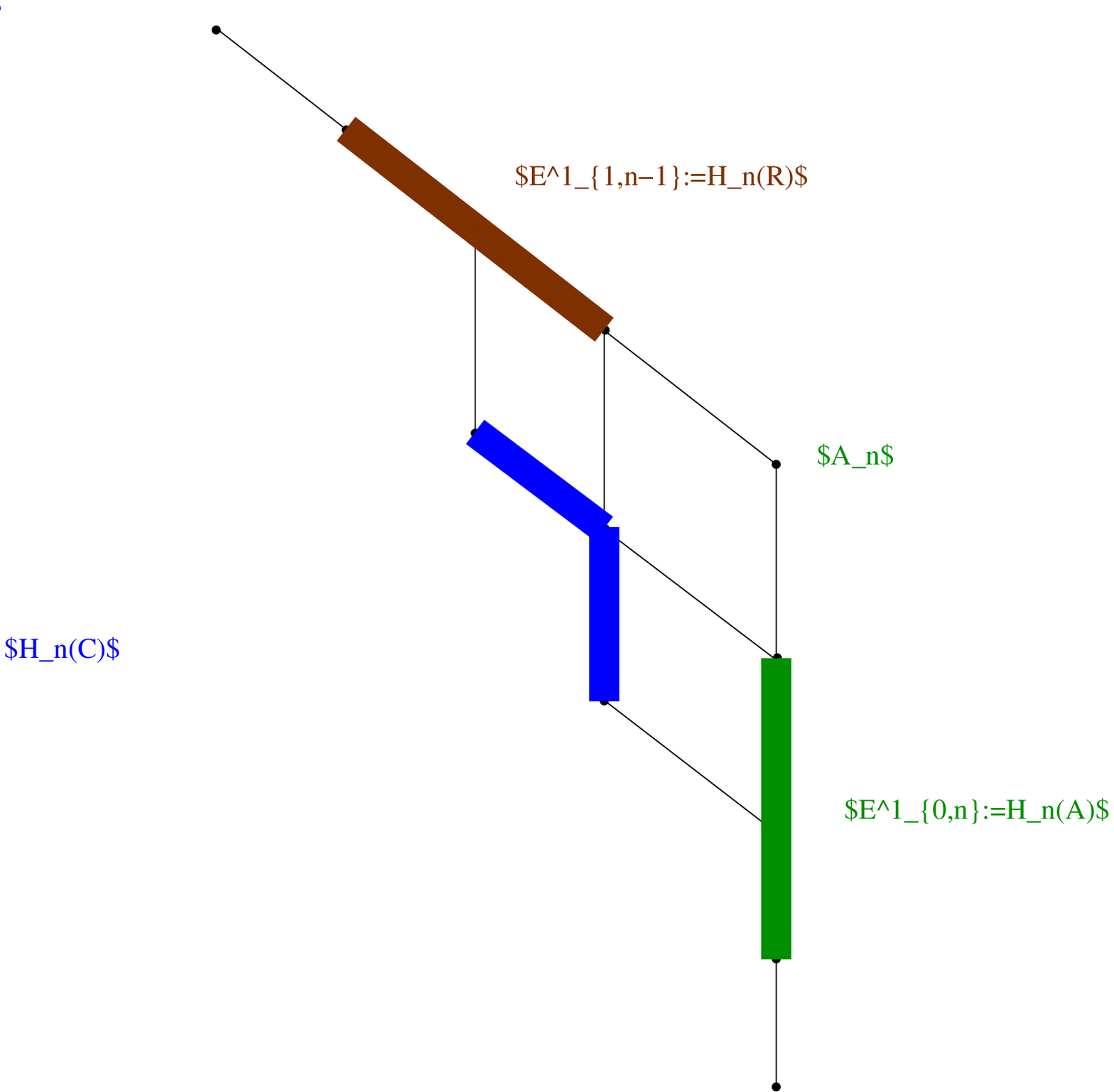}
    \caption{$E^1$}
    \label{E^1}
  \end{minipage}
  \quad \quad $\leadsto$
  \begin{minipage}[c]{0.4\linewidth}
    \vspace{0.5cm}
    \centering
    \psfrag{$C_n$}{$\color{blue}C_n$}
    \psfrag{$A_n$}{$\color{darkgreen}A_n$}
    \psfrag{$E^2_{1,n-1}$}{${\color{brown}E^2_{1,n-1}}={\color{brown}\ker}({\color{red}\partial_*})$}
    \psfrag{$E^2_{0,n}$}{${\color{darkgreen}E^2_{0,n}}={\color{darkgreen}\coker}({\color{red}\partial_*})$}
    \psfrag{$H_n(C)$}{$\color{blue}H_n(C)$}
    \includegraphics[width=0.4\textwidth]{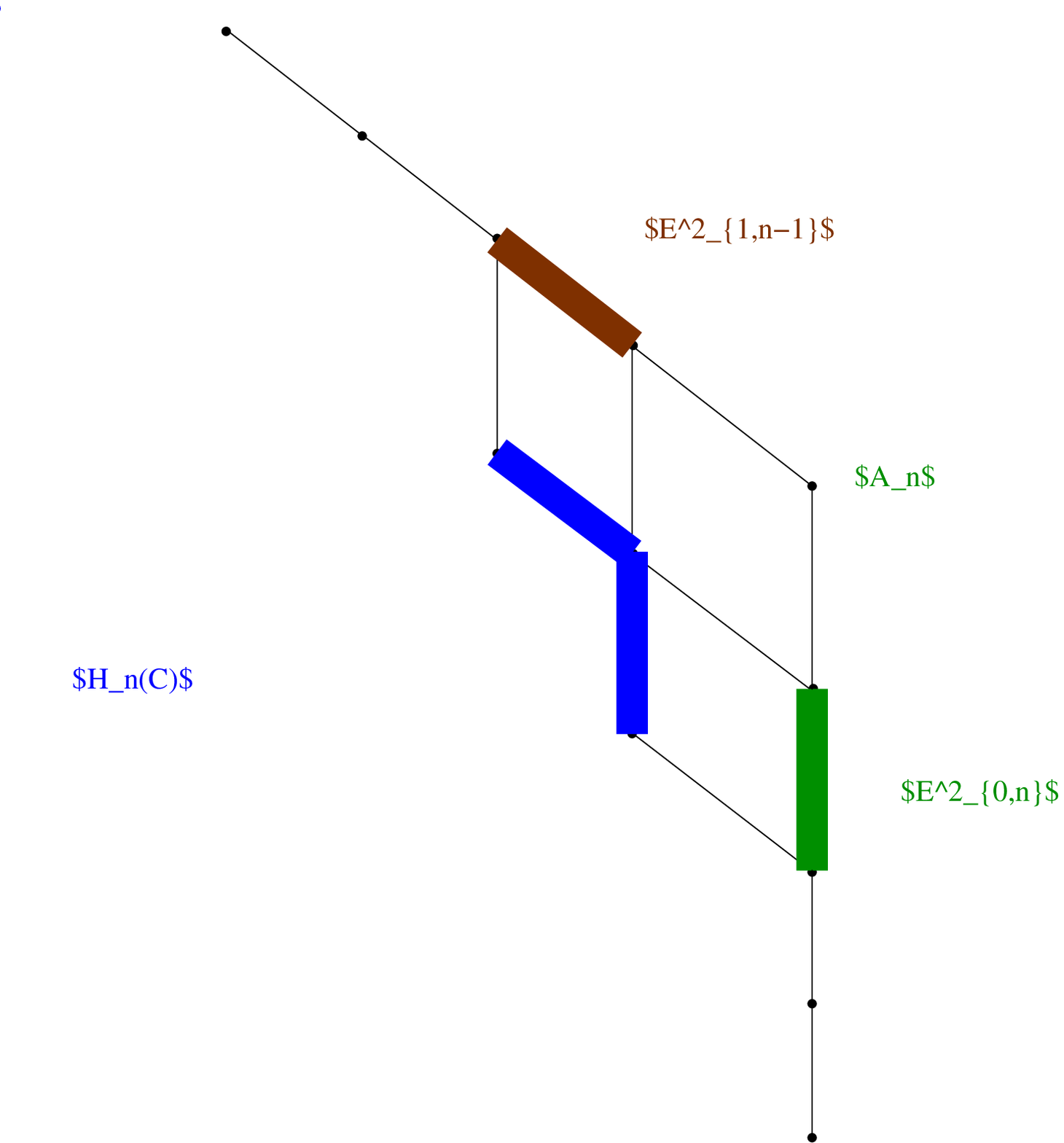}
    \caption{$E^2=E^\infty$}
    \label{E^2}
  \end{minipage}
  \\
  \vspace{0.5cm}
  The approximation process of the graded parts of ${\color{blue}H_n(C)}$
  \label{E}
\end{figure}

Figures~\ref{E^0}-\ref{E^2} show how the graded parts of ${\color{blue}H_n(C)}$ get successively approximated by the objects in the spectral sequence $E^r_{pq}$, naturally identified with certain subfactors of ${\color{blue}C_n}$ for $n=p+q$. Figure~\ref{E^2} proves that the second isomorphism theorem provides \emph{canonical} isomorphisms between the graded parts of the total homology ${\color{blue}H_n(C)}$ and the objects ${\color{brown}E^\infty_{1,n-1}}={\color{brown}E^2_{1,n-1}}$ and ${\color{darkgreen}E^\infty_{0,n}}={\color{darkgreen}E^2_{0,n}}$ of the stable sheet. And modulo these natural isomorphisms Figure~\ref{E^2} further suggests that knowing how to identify ${\color{brown}E^\infty_{1,n-1}}$ and ${\color{darkgreen}E^\infty_{0,n}}$ with the indicated subfactors of ${\color{blue}C_n}$ will suffice to explicitly construct the extension (\ref{extension}) in the form
\begin{equation}\label{extensionE}
  0 \xleftarrow{} {\color{brown}E^\infty_{1,n-1}}
  \xleftarrow{} {\color{blue}H_n(C)} \xleftarrow{}
  {\color{darkgreen}E^\infty_{0,n}} \xleftarrow{} 0.
\end{equation}
But since we cannot use maps to identify objects with subfactors of other objects we are lead to introduce the notion of \textbf{generalized maps} in the next Section. Roughly speaking, this notion enables us to interpret the pairs of horizontal arrows in Figure~\ref{Emb} as \textbf{generalized embeddings}.

\begin{figure}[htb]
  \begin{minipage}[c]{1.1\linewidth}
    \centering
    \psfrag{$C_n$}{$\color{blue}C_n$}
    \psfrag{$A_n$}{$\color{darkgreen}A_n$}
    \psfrag{$E^2_{1,n-1}$}{${\color{brown}E^\infty_{1,n-1}}$}
    \psfrag{$E^2_{0,n}$}{${\color{darkgreen}E^\infty_{0,n}}$}
    \psfrag{$H_n(C)$}{$\color{blue}H_n(C)$}
    \includegraphics[width=0.4\textwidth]{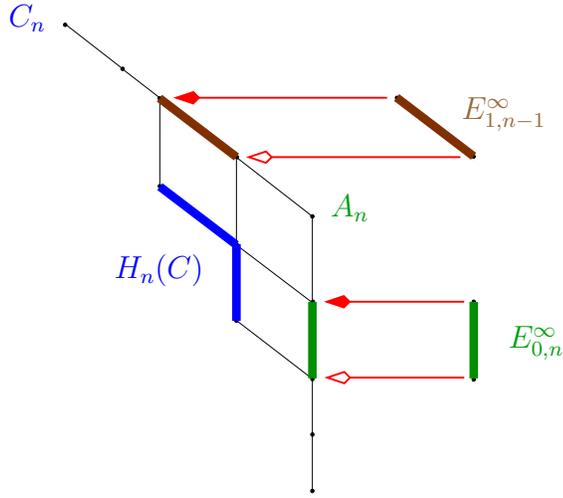}
  \end{minipage}
  \caption{The generalized embeddings}
  \label{Emb}
\end{figure}

\section{Generalized maps}\label{genmor}

A morphism between two objects (modules, complexes, \ldots) induces a map between their lattice of subobjects, and the \textbf{homomorphism theorem} implies that this map gives rise to a bijective correspondence between the subobjects of the target lying in the image and those subobjects of the source containing the kernel. This motivates the visualization in Figure~\ref{Mor} of a morphism $T\xleftarrow{\color{red}\phi}S$ with source $S$ and target $T$. The homomorphism theorem states that the morphism ${\color{red}\phi}$, indicated by the horizontal pair of arrows in Figure~\ref{Mor}, maps $S/\ker({\color{red}\phi})$ onto the \emph{subobject} $\img({\color{red}\phi})$ in a structure-preserving way. In this sense, the exact ladder of morphisms in (\ref{LES}) visualizes part of the long exact homology sequence.
\begin{figure}[htb]
  \begin{minipage}[c]{0.8\linewidth}
    \centering
    \psfrag{$T$}{$T$}
    \psfrag{$S$}{$S$}
    \psfrag{$phi$}{$\color{red}\phi$}
    \psfrag{$im(phi)$}{$\img{\color{red}\phi}$}
    \psfrag{$ker(phi)$}{$\ker{\color{red}\phi}$}
    \includegraphics[width=0.4\textwidth]{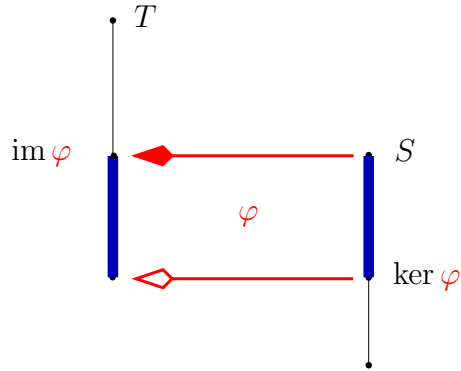}
  \end{minipage}
  \caption{The homomorphism theorem}
  \label{Mor}
\end{figure}

The simplest motivation for the notion of a generalized morphism $T\xleftarrow{\color{red}\psi}S$ is the desire to give sense to the picture in Figure~\ref{Gen} ``mapping'' a quotient of $S$ onto a \emph{subfactor} of $T$.
\begin{figure}[htb]
  \begin{minipage}[c]{0.8\linewidth}
    \centering
    \psfrag{$T$}{$T$}
    \psfrag{$S$}{$S$}
    \psfrag{$L$}{$L$}
    \psfrag{$psi$}{$\color{red}\psi$}
    \psfrag{$im(psi)$}{$\img{\color{red}\psi}$}
    \psfrag{$Im(psi)$}{$\Img{\color{red}\psi}$}
    \psfrag{$ker(psi)$}{$\ker{\color{red}\psi}$}
    \includegraphics[width=0.4\textwidth]{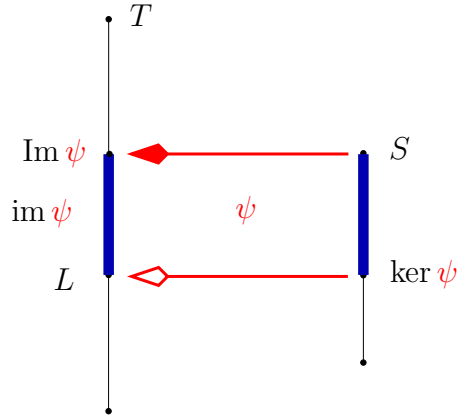}
  \end{minipage}
  \caption{A generalized morphism}
  \label{Gen}
\end{figure}

\begin{defn}[Generalized morphism]
  Let $S$ and $T$ be two objects in an abelian category (of modules over some ring). A \textbf{generalized morphism} $\psi$ with source $S$ and target $T$ is a pair of morphisms $(\bar{\psi},\imath)$, where $\imath$ is a morphism from some third object $F$ to $T$ and $\bar{\psi}$ is a morphism from $S$ to $\coker\imath=T/\img(\imath)$. We call $\bar{\psi}$ the morphism \textbf{associated} to $\psi$ and  $\imath$ the \textbf{morphism aid} of $\psi$ and denote it by $\Aid\psi$. Further we call $L:=\img\imath\leq T$ the \textbf{morphism aid subobject}. Two generalized morphisms $(\bar{\psi},\imath)$ and $(\bar{\phi},\jmath)$ with ($\img \imath = \img \jmath$ and) $\bar{\psi}=\bar{\phi}$ will be identified.
\end{defn}

Philosophically speaking, this definition frees one from the ``conservative'' standpoint of viewing $\psi$ as morphism to the quotient $T/\img\imath$. Instead it allows one to view $\psi$ as a ``morphism'' to the full object $T$ by directly incorporating $\imath$ in the very definition of $\psi$. The intuition behind the notion ``morphism aid'' (resp.\  ``morphism aid subobject'') is that $\imath$ (resp.\  $L=\img\imath$) \emph{aids} $\psi$ to become a (well-defined) morphism. Figure~\ref{GenMor} visualizes the generalized morphism $\psi$ as a pair $(\bar{\psi},\imath)$.

\begin{figure}[htb]
  \begin{minipage}[c]{1.1\linewidth}
    \centering
    \psfrag{$F$}{$F$}
    \psfrag{$T$}{$T$}
    \psfrag{$S$}{$S$}
    \psfrag{$T/im(alpha)$}{$T/\img \imath$}
    \psfrag{$barpsi$}{$\bar{\psi}$}
    \psfrag{$pi$}{$\pi_\imath$}
    \psfrag{$alpha$}{$\imath$}
    \psfrag{$im(psi)$}{$\img{\color{red}\psi}$}
    \psfrag{$im(barpsi)$}{$\img\bar{\psi}$}
    \psfrag{$im(alpha)$}{$L=\img\imath$}
    \psfrag{$ker(barpsi)$}{$\ker\bar{\psi}$}
    \psfrag{$pi^{-1}(im(barpsi))$}{$\pi_\imath^{-1}(\img \bar{\psi}) =: \Img{\color{red}\psi}$}
    \includegraphics[width=0.6\textwidth]{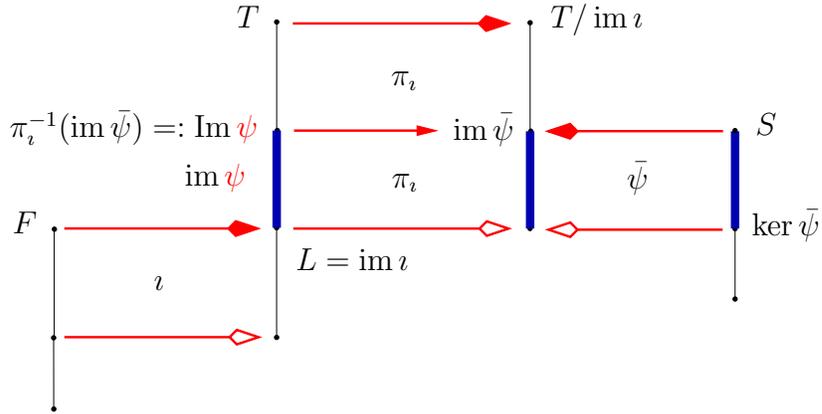}
  \end{minipage}
  \caption{The morphism aid $\imath$ and the associated morphism $\bar{\psi}$}
  \label{GenMor}
\end{figure}

Note that replacing $\imath$ by a morphism with the same image does not alter the generalized morphism. We will therefore often write $(\bar{\psi},L)$ for the generalized morphism $(\bar{\psi},\imath)$, where $\imath$ is any morphism with $\img \imath = L \leq T$. The most natural choice would be the embedding $\imath:L \to T$. Figure~\ref{Gen} visualizes the generalized morphism $\psi$ as a pair $(\bar{\psi},L)$. It also reflects the idea behind the definition more than the ``expanded'' Figure~\ref{GenMor} does.

If $L=\img\imath$ vanishes, then $\psi$ is nothing but the (ordinary) morphism $\bar{\psi}$. Conversely, any morphism can be viewed as a generalized morphism with trivial morphism aid subobject $L=0$.

\begin{defn}[Terminology for generalized morphisms]
  Let  $\psi=(\bar{\psi},\imath): S \to T$ be a generalized morphism. Define the \textbf{kernel} $\ker(\psi):=\ker \bar{\psi}$, the kernel of the associated map. If $\pi_\imath$ denotes the natural epimorphism $T\to T/\img\imath$, then define the \textbf{combined image} $\Img\psi$ to be the \emph{submodule} $\pi_\imath^{-1}(\img \bar{\psi})$ of $T$. In general it differs from the \textbf{image} $\img\psi$ which is defined as the \emph{subfactor} $\Img\psi/\img \imath$ of $T$ (cf.~Figure~\ref{GenMor}). We call $\psi$ a \textbf{generalized monomorphism} (resp.\  \textbf{generalized epimorphism}, \textbf{generalized isomorphism}) if the associated map $\bar{\psi}$ is a monomorphism (resp.\ epimorphism, isomorphism).
\end{defn}

Sometimes we use the terminology \textbf{generalized map} instead of generalized morphism and \textbf{generalized embedding} instead of generalized monomorphism, especially when the abelian category is a category of modules (or complexes of modules, etc.).

\bigskip
As a first application of the notion of generalized embeddings we state the following definition, which is central for this work.

\begin{defn}[Filtration system]\label{system}
Let $\mathcal{I}=(p_0,\ldots,p_{m-1})$ be a finite interval in $\Z$, i.e.\  $p_{i+1}=p_i+1$. \\
A finite sequence of generalized embeddings $\psi_p=(\bar{\psi}_p,L_p)$, $p\in\mathcal{I}$ with common target $M$ is called an \textbf{ascending $m$-filtration system} of $M$ if
\begin{enumerate}
  \item $\psi_{p_0}$ is an ordinary monomorphism, i.e.\  $L_{p_0}$ vanishes;
  \item $L_p=\Img\psi_{p-1}$, for $p=p_1,\ldots,p_{m-1}$;
  \item $\psi_{p_{m-1}}$ is a generalized isomorphism, i.e.\  $\Img\psi_{p_{m-1}}=M$.
\end{enumerate}
\begin{figure}[htb]
  \begin{minipage}[c]{1.1\linewidth}
    \centering
    \psfrag{$psi_0$}{$\psi_{p_0}$}
    \psfrag{$psi_1$}{$\psi_{p_1}$}
    \psfrag{$psi_{m-2}$}{$\psi_{p_{m-2}}$}
    \psfrag{$psi_{m-1}$}{$\psi_{p_{m-1}}$}
    \psfrag{$L_1$}{$L_{p_1}$}
    \psfrag{$L_2$}{$L_{p_2}$}
    \psfrag{$L_{m-2}$}{$L_{p_{m-2}}$}
    \psfrag{$L_{m-1}$}{$L_{p_{m-1}}$}
    \psfrag{$M$}{$M$}
    \includegraphics[width=0.55\textwidth]{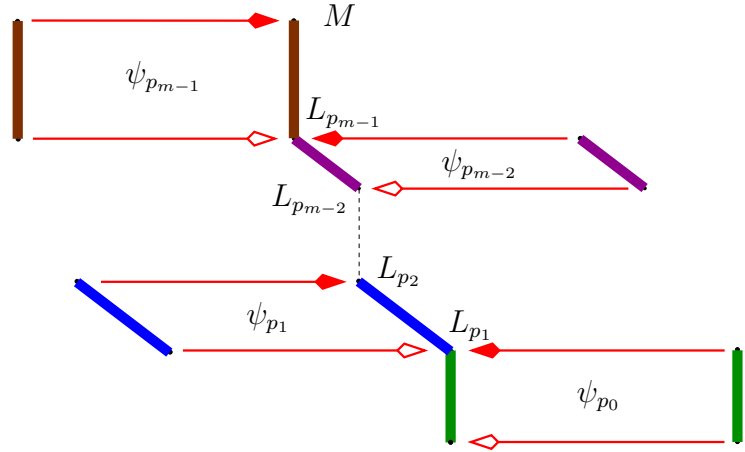}
  \end{minipage}
  \caption{An ascending $m$-filtration system}
  \label{System}
\end{figure}
A finite sequence of generalized embeddings $\psi^p=(\bar{\psi}^p,L^p)$, $p\in\mathcal{I}$ with common target $M$ is called a \textbf{descending $m$-filtration system} of $M$ if
\begin{enumerate}
  \item $\psi^{p_0}$ is a generalized isomorphism, i.e.\  $\Img\psi^{p_0}=M$;
  \item $L^p=\Img\psi^{p+1}$, for $p=p_0,\ldots,p_{m-2}$;
  \item $\psi^{p_{m-1}}$ is an ordinary monomorphism, i.e.\  $L^{p_{m-1}}$ vanishes.
\end{enumerate}
We say $(\psi_p)$ \textbf{computes} a given filtration $(F_p M)$ if $\Img \psi_p = F_p M$ for all $p$.
\end{defn}

\bigskip
Now we come to the definition of the basic operations for generalized morphisms. Two generalized maps $\psi=(\bar{\psi},\imath)$ and $\phi=(\bar{\phi},\jmath)$ are summable only if $\img\imath=\img\jmath$ and we set $\psi\pm\phi:=(\bar{\psi}\pm\bar{\phi},\imath)$.

The following notational convention will prove useful: It will often happen that one wants to alter a generalized morphism $\psi=(\bar{\psi},L_\psi)$ with target $T$ by replacing $L_\psi$ with a larger subobject $L$, i.e. a subobject $L \leq T$ containing $L_\psi$. We will sloppily write $\widetilde{\psi}=(\bar{\psi},L)$, where $\bar{\psi}$ now stands for the composition of $\bar{\psi}$ with the natural epimorphism $T/L_\psi \to T/L$. We will say that $\psi$ was \textbf{coarsened} to $\widetilde{\psi}$ to refer to the passage from $\psi=(\bar{\psi},L_\psi)$ to $\widetilde{\psi}=(\bar{\psi},L)$ with $L_\psi\leq L \leq T$. As Figure~\ref{Coarsen} shows, coarsening $\psi$ might very well enlarge its combined image $\Img\psi$. The word ``coarse'' refers to the fact that the image $\img\widetilde{\psi}$ is naturally isomorphic to a \emph{quotient} of $\img\psi$, and Figure~\ref{Coarsen} shows that this natural isomorphism is given by the second isomorphism theorem. We say that the coarsening $\widetilde{\psi}=(\bar{\psi},L)$ of $\psi=(\bar{\psi},L_\psi)$ is \textbf{effective}, if $\Img\psi \cap L = L_\psi$. Figure~\ref{Coarsen} shows that in this case the images $\img\psi$ and $\img\widetilde{\psi}$ are naturally \emph{isomorphic}.

\begin{figure}[htb]
  \begin{minipage}[c]{1.1\linewidth}
    \centering
    \psfrag{$T$}{$T$}
    \psfrag{$S$}{$S$}
    \psfrag{$L$}{$L$}
    \psfrag{$L_psi$}{$L_\psi$}
    \psfrag{$psi$}{$\psi$}
    \psfrag{$phi$}{$\widetilde{\psi}$}
    \psfrag{$Im(psi)$}{$\Img\psi$}
    \psfrag{$Im(phi)$}{$\Img\widetilde{\psi}$}
    \psfrag{$im(psi)$}{$\img\psi$}
    \psfrag{$im(phi)$}{$\img\widetilde{\psi}$}
    \psfrag{$ker(psi)$}{$\ker\psi$}
    \psfrag{$ker(phi)$}{$\ker\widetilde{\psi}$}
    \includegraphics[width=0.6\textwidth]{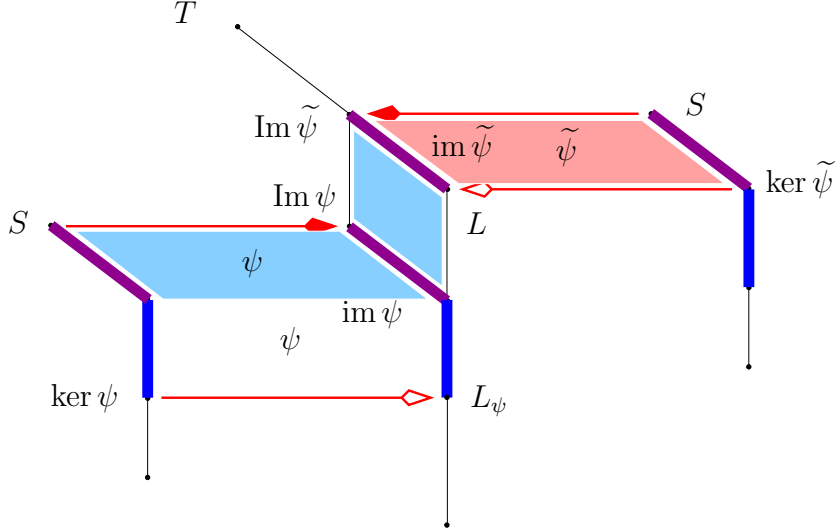}
  \end{minipage}
  \caption{Coarsening the generalized map $\psi=(\bar{\psi},K)$ to $\widetilde{\psi}=(\bar{\psi},L)$}
  \label{Coarsen}
\end{figure}

For the composition $\psi\circ\phi$ of $S_\phi \xrightarrow{\phi} T_\phi=S_\psi \xrightarrow{\psi} T_\psi$ follow the filled area in Figure~\ref{Composition} from left to right.
\begin{figure}[htb]
  \begin{minipage}[c]{1.06\linewidth}
    \centering
    \psfrag{$S_phi$}{$S_\phi$}
    \psfrag{$T_phi=S_psi$}{$T_\phi=S_\psi$}
    \psfrag{$T_psi$}{$T_\psi$}
    \psfrag{$T_psi$}{$T_\psi$}
    \psfrag{$phi$}{$\phi$}
    \psfrag{$ker(phi)$}{$\ker\phi$}
    \psfrag{$ker(tildephi)$}{$\ker \psi\circ\phi=\ker\widetilde{\phi}$}
    \psfrag{$Im(phi)$}{$\Img\phi$}
    \psfrag{$im(phi)$}{$\img\phi$}
    \psfrag{$im(jota)$}{$\img\jmath$}
    \psfrag{$K$}{$K$}
    \psfrag{$psi$}{$\psi$}
    \psfrag{$ker(psi)$}{$\ker\psi$}
    \psfrag{$Im(psi)$}{$\Img\psi$}
    \psfrag{$im(psi phi)$}{$\color{magenta}\img\psi\circ\phi$}
    \psfrag{$Im(psi phi)$}{$\color{magenta}\Img\psi\circ\phi$}
    \psfrag{$L$}{$\color{magenta} L := \pi_\imath^{-1}(\img (\bar{\psi} \circ \jmath))$}
    \psfrag{$im(iota)$}{$\img\imath$}
    \includegraphics[width=0.7\textwidth]{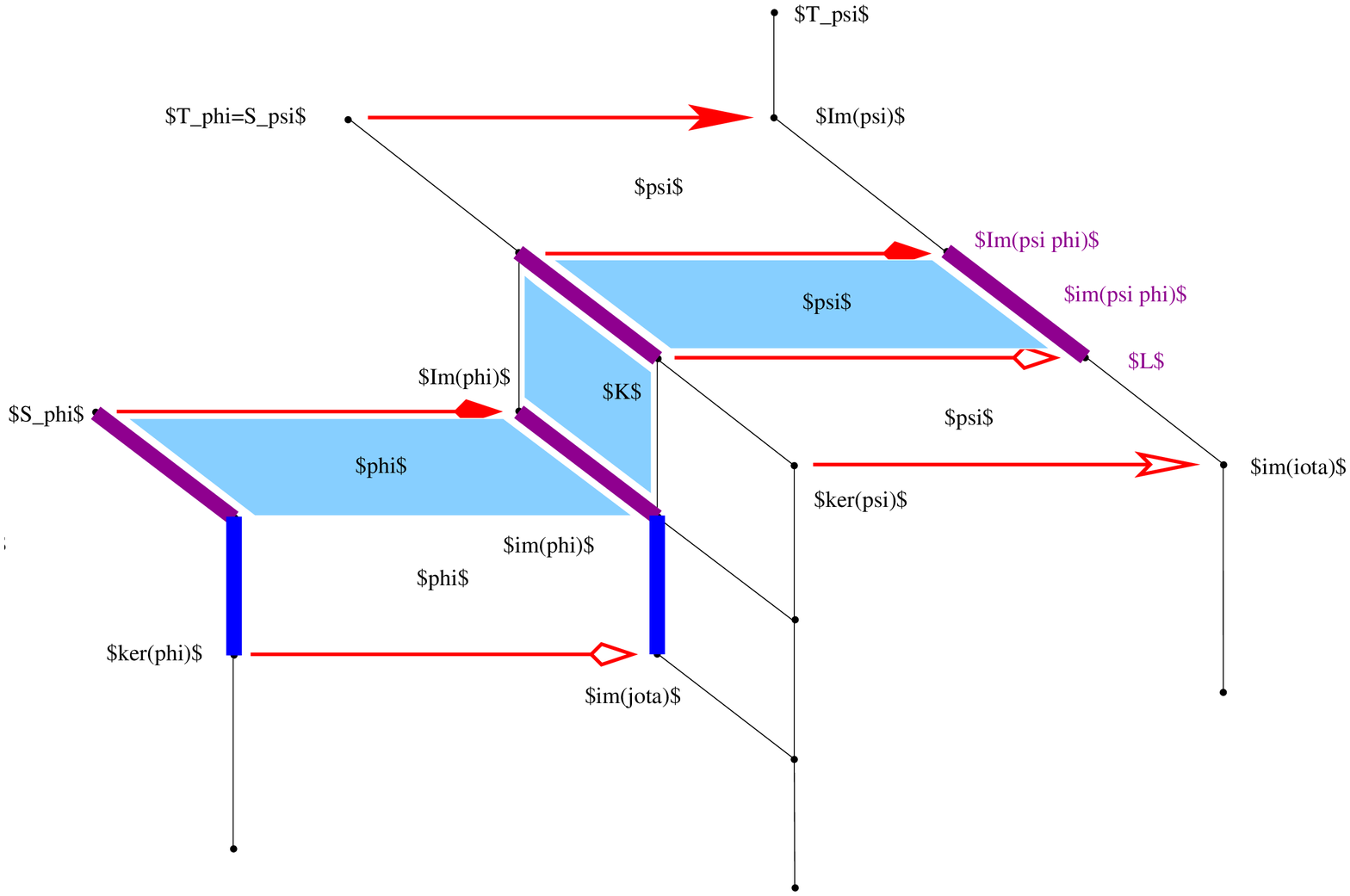}
  \end{minipage}
  \caption{The composition $\psi\circ\phi$}
  \label{Composition}
\end{figure}

Formally, first coarsen $\phi=(\bar{\phi},\jmath) \to \widetilde{\phi}=(\bar{\phi},K)$, where
\[
  K:=\img\jmath + \ker\psi \leq T_\phi.
\]
Then coarsen $\psi=(\bar{\psi},\imath) \to \widetilde{\psi}=(\bar{\psi},L)$, where
\[
  L := \pi_\imath^{-1}(\img (\bar{\psi} \circ \jmath)) =  \pi_\imath^{-1}(\bar{\psi}(K)) \leq T_\psi
\]
and $\pi_\imath$ as above. Now set
\[
   \psi\circ\phi := (\bar{\psi}\circ \bar{\phi},L).
\]
Note that $\ker \psi\circ \phi = \ker\widetilde{\phi}$.

\smallskip
Finally we define the division $\beta^{-1} \circ \gamma$ of two generalized maps $S_\gamma\xrightarrow{\gamma} T \xleftarrow{\beta}S_\beta$ under the conditions of the next definition.

\begin{defn}[The lifting condition]
Let $\gamma=(\bar{\gamma},L_\gamma)$ and $\beta=(\bar{\beta},L_\beta)$ be two generalized morphisms with the same target $N$.
\[
  \xymatrix{
  M' \ar[rd]^\gamma & \\
  N' \ar[r]_\beta & N.
  }
\]
Consider the \textbf{common coarsening} of the generalized maps $\beta$ and $\gamma$, i.e. the generalized maps $\widetilde{\beta}:=(\bar{\beta},L)$ and $\widetilde{\gamma}:=(\bar{\gamma},L)$, where $L=L_\gamma+L_\beta \leq N$. We say $\beta$ \textbf{lifts} $\gamma$ (or \textbf{divides} $\gamma$) if the following two conditions are satisfied:
\begin{enumerate}
  \item[\textbf{(im)}] The combined image of $\widetilde{\beta}$ contains the combined image of $\widetilde{\gamma}$:
  \[
  \Img\widetilde{\gamma} \leq \Img\widetilde{\beta}.
  \]
  \item[\textbf{(eff)}] The coarsening $\gamma \to \widetilde{\gamma}$ is effective, i.e.\  $\Img\gamma \cap L = L_\gamma$.
\end{enumerate}
\end{defn}

We will refer to $\widetilde{\gamma}$ as \textbf{the effective coarsening of $\gamma$ with respect to $\beta$}.
The following lemma justifies this definition. Both the definition and the lemma are visualized in Figure~\ref{Lemma}. To state the lemma one last notion is needed: Define two generalized morphisms $\psi=(\bar{\psi},L_\psi)$ and $\phi=(\bar{\phi},L_\phi)$ to be \textbf{equal up to effective common coarsening} or \textbf{quasi-equal} if their common coarsenings  $\widetilde{\psi}:=(\overline{\psi},L)$ and $\widetilde{\phi}:=(\overline{\phi},L)$ coincide \emph{and} are \emph{both} effective. We write $\psi\triangleq\phi$.

\begin{figure}[htb]
  \begin{minipage}[c]{1\linewidth}
    \centering
    \psfrag{$N$}{$N$}
    \psfrag{$M'$}{$M'$}
    \psfrag{$N'$}{$N'$}
    \psfrag{$L$}{$L$}
    \psfrag{$Im(alpha)$}{$\Img\alpha$}
    \psfrag{$im(alpha)$}{$\img\alpha$}
    \psfrag{$L_alpha$}{$L_\alpha$}
    \psfrag{$L_beta$}{$L_\beta$}
    \psfrag{$L_gamma$}{$L_\gamma$}
    \psfrag{$Im(tildebeta)$}{$\Img\widetilde{\beta}$}
    \psfrag{$Im(tildegamma)$}{$\Img\widetilde{\gamma}$}
    \psfrag{$beta$}{$\beta$}
    \psfrag{$ker(beta)$}{$\ker\beta$}
    \psfrag{$Im(beta)$}{$\Img\beta$}
    \psfrag{$gamma$}{$\gamma$}
    \psfrag{$ker(gamma)$}{$\ker\gamma$}
    \psfrag{$Im(gamma)$}{$\Img\gamma$}
    \psfrag{$phi$}{$\widetilde{\psi}$}
    \psfrag{$Im(psi)$}{$\Img\psi$}
    \psfrag{$Im(phi)$}{$\Img\widetilde{\psi}$}
    \psfrag{$im(psi)$}{$\img\psi$}
    \psfrag{$im(phi)$}{$\img\widetilde{\psi}$}
    \psfrag{$ker(psi)$}{$\ker\psi$}
    \psfrag{$ker(phi)$}{$\ker\widetilde{\psi}$}
    \includegraphics[width=0.8\textwidth]{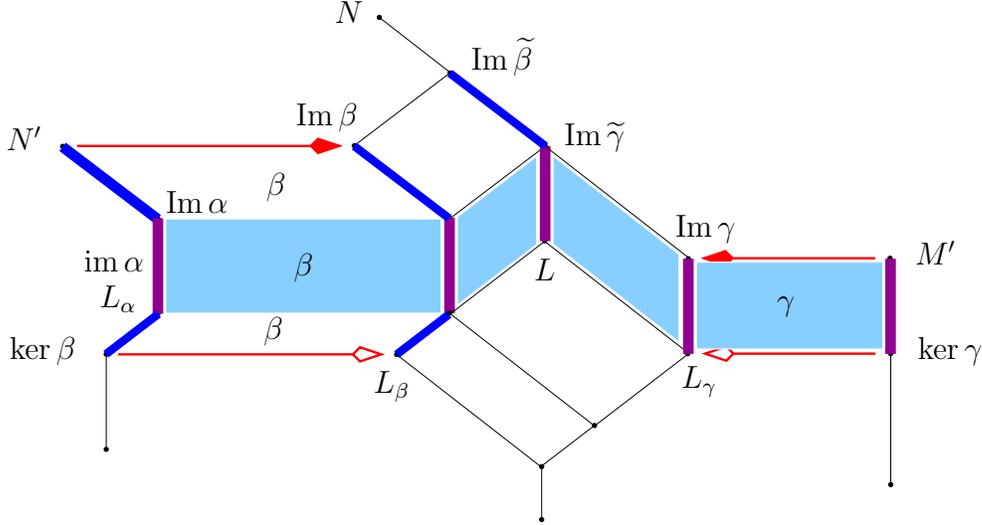}
  \end{minipage}
  \caption{The lifting condition and the lifting lemma}
  \label{Lemma}
\end{figure}

\begin{lemma}[The lifting lemma]\label{lifting_lemma}
Let $\gamma=(\bar{\gamma},L_\gamma)$ and $\beta=(\bar{\beta},L_\beta)$ be two generalized morphisms with the same target $N$. Suppose that $\beta$ lifts $\gamma$. Then there exists a generalized morphism $\alpha:M' \to N'$ with $\beta\circ\alpha \triangleq \gamma$,
\[
  \xymatrix{
  M' \ar[rd]^\gamma \ar[d]^\alpha & \\
  N' \ar[r]_\beta & N.
  }
\]
i.e.\  $\beta\circ\alpha$ is equal to $\gamma$ up to effective common coarsening. $\alpha$ is called \textbf{a lift} of $\gamma$ along $\beta$. \\
Further let $\widetilde{\gamma}:=(\bar{\gamma},L_{\widetilde{\gamma}})$ be the effective coarsening of $\gamma$ with respect to $\beta$, i.e.\  $L_{\widetilde{\gamma}}=L=L_\gamma+L_\beta$. Then there exists a \emph{unique} lift $\alpha=(\bar{\alpha},L_\alpha)$ satisfying
\begin{enumerate}
  \item[(a)] $\Img\alpha = \bar{\beta}^{-1}(\Img\widetilde{\gamma})$ and
  \item[(b)] $L_\alpha = \bar{\beta}^{-1}(L_{\widetilde{\gamma}})$.
\end{enumerate}
This $\alpha$ is called \textbf{the lift} of $\gamma$ along $\beta$, or \textbf{the quotient} of $\gamma$ by $\beta$ and is denoted by $\beta^{-1}\circ \gamma$ or by ${\gamma}/{\beta}$.
\end{lemma}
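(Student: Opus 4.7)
The plan is to construct $\alpha$ directly from the common coarsening target $N/L$, where $L := L_\gamma + L_\beta$. Write $\widetilde{\bar\gamma}\colon M' \to N/L$ and $\widetilde{\bar\beta}\colon N' \to N/L$ for the associated morphisms of $\widetilde\gamma$ and $\widetilde\beta$; hypothesis (im) then reads $\img\widetilde{\bar\gamma} \le \img\widetilde{\bar\beta}$ inside $N/L$. Set $L_\alpha := \bar\beta^{-1}(L_{\widetilde\gamma}) = \ker\widetilde{\bar\beta}$, which already forces property (b). By the homomorphism theorem, $\widetilde{\bar\beta}$ factors as the canonical epimorphism $N' \twoheadrightarrow N'/L_\alpha$ followed by a monomorphism $\widehat\beta\colon N'/L_\alpha \hookrightarrow N/L$ with image exactly $\img\widetilde{\bar\beta}$; the inclusion (im) then allows the definition
\[
  \bar\alpha \;:=\; \widehat\beta^{-1} \circ \widetilde{\bar\gamma}\colon M' \longrightarrow N'/L_\alpha,
  \qquad
  \alpha := (\bar\alpha, L_\alpha).
\]

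To establish (a), use $\img\bar\alpha = \widehat\beta^{-1}(\img\widetilde{\bar\gamma})$ to rewrite $\Img\alpha = \pi_{L_\alpha}^{-1}(\img\bar\alpha) = \widetilde{\bar\beta}^{-1}(\img\widetilde{\bar\gamma})$, and refactor the preimage through $N' \xrightarrow{\bar\beta} N/L_\beta \to N/L$ to identify this with $\bar\beta^{-1}(\Img\widetilde\gamma)$. For quasi-equality $\beta\circ\alpha \triangleq \gamma$, I would run the composition recipe of Section~\ref{genmor}: since $\ker\bar\beta \le L_\alpha$ the first coarsening is trivial ($K = L_\alpha$), and the second yields $L_{\mathrm{final}} := \pi_{L_\beta}^{-1}(\bar\beta(L_\alpha)) = L \cap \Img\beta$. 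Pushing $\beta\circ\alpha$ further down modulo $L$ produces $\widehat\beta\circ\bar\alpha = \widetilde{\bar\gamma}$, so the common coarsenings of $\beta\circ\alpha$ and $\gamma$ with morphism aid $L$ coincide. Effectiveness of $\gamma \to \widetilde\gamma$ is exactly (eff); on the $\beta\circ\alpha$-side, $\Img(\beta\circ\alpha) \le \Img\beta$ together with $L_{\mathrm{final}} = L \cap \Img\beta$ yields $\Img(\beta\circ\alpha) \cap L = L_{\mathrm{final}}$, as required.

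Uniqueness is then immediate: any $\alpha' = (\bar{\alpha}', L_{\alpha'})$ satisfying (a) and (b) has $L_{\alpha'} = L_\alpha$ by (b), and the required quasi-equality $\beta \circ \alpha' \triangleq \gamma$ forces $\widehat\beta \circ \bar{\alpha}' = \widetilde{\bar\gamma} = \widehat\beta \circ \bar\alpha$; since $\widehat\beta$ is mono, $\bar{\alpha}' = \bar\alpha$. The main obstacle I anticipate is the pair of equalities $L_{\mathrm{final}} = L \cap \Img\beta$ and $\Img(\beta\circ\alpha) \cap L = L_{\mathrm{final}}$, both of which need careful diagram chases in the subobject lattice of $N$ using the modular law (cf.\ Figure~\ref{ABZ}). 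Hypothesis (eff) enters precisely to prevent $\img\widetilde{\bar\gamma}$ from capturing spurious contributions living in $(L \cap \Img\gamma) \setminus L_\gamma$, which is the subfactor that distinguishes $\img\gamma$ from $\img\widetilde\gamma$ and is the exact obstruction one must rule out to identify the coarsened image of $\beta\circ\alpha$ with $\Img\widetilde\gamma$.
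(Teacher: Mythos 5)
Your proposal is correct and is in essence the same argument the paper makes, namely an invocation of the homomorphism theorem; the difference is that the paper's ``proof'' is just the instruction to read the filled region of Figure~\ref{Lemma} from right to left, whereas you unpack that picture into an explicit construction of $\bar\alpha$ as $\widehat\beta^{-1}\circ\widetilde{\bar\gamma}$ after factoring $\widetilde{\bar\beta}$ through $N'/L_\alpha$, together with a verification of (a), (b), quasi-equality, and uniqueness. One small step you take for granted in the uniqueness paragraph is worth flagging: to conclude $\widehat\beta\circ\bar{\alpha}'=\widetilde{\bar\gamma}$ from $\beta\circ\alpha'\triangleq\gamma$ you need the common coarsening aid $L_{\beta\circ\alpha'}+L_\gamma$ to equal $L=L_\gamma+L_\beta$; this holds because $L_{\beta\circ\alpha'}=L\cap\Img\beta$ contains $L_\beta$, but it deserves a line. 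Otherwise the computations ($L_\alpha=\ker\widetilde{\bar\beta}$, $L_{\mathrm{final}}=L\cap\Img\beta$, $\Img(\beta\circ\alpha)\le\Img\beta$) are exactly the lattice identities Figure~\ref{Lemma} encodes, and your reading of where hypothesis (eff) enters --- to prevent $\img\widetilde{\bar\gamma}$ from acquiring contributions from $(L\cap\Img\gamma)\setminus L_\gamma$ --- is the right one.
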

\begin{proof}
  The subobject lattice(s) in Figure~\ref{Lemma} describes the most general setup imposed by conditions (im) and (eff), in the sense that all other subobject lattices of configurations satisfying these two conditions are at most degenerations of the one in Figure~\ref{Lemma}. Now to construct the unique $\alpha$ simply follow the filled area from right to left.
\end{proof}

The reader may have already noticed that the choice of the symbol $\triangleq$ for quasi-equality was motivated by Figure~\ref{Lemma}, with $L$ at the tip of the pyramid. The proof makes it clear that the lifting lemma is yet another incarnation of the homomorphism theorem.

\begin{rmrk}[Effective computability]\label{comp}
  Note that the lift $\alpha=(\bar{\alpha},L_\alpha)$ sees from $N'$ only its subfactor $N'/L_\alpha$. Replacing $N'$ by its subfactor $N'/L_\alpha$ turns $\beta$ into a generalized embedding, which we again denote by $\beta$. Now $\gamma$ and \emph{this} $\beta$ have effective common coarsenings $\widetilde{\gamma}=(\bar{\gamma},L)$ and $\widetilde{\beta}=(\bar{\beta},L)$, which see from $N$ only $N/L$, where $L=L_\gamma+L_\beta$. And modulo $L$ the generalized morphism $\widetilde{\gamma}$ becomes a morphism and the generalized embedding $\widetilde{\beta}$ becomes an (ordinary) embedding. So from the point of view of effective computations the setup can be reduced to the following situation: $\gamma:M'\to N$ is a morphism and $\beta:N' \to N$ is a \emph{monomorphism}. When $M'$, $N'$, and $N$ are finitely presented modules over a \textbf{computable ring} (cf.~Def.~\ref{compdef}) it was shown in \cite[Subsection~3.1.1]{BR} that in this case the unique morphism $\alpha:M'\to N$ is \textbf{effectively} computable.
\end{rmrk}

With the notion of a generalized embedding at our disposal we can finally give the horizontal arrows in Figure~\ref{Emb} a meaning. Now consider the three generalized embeddings $\iota:  \color{blue}H_n(C) \to \color{blue}C_n$, $\iota_0: {\color{darkgreen}E^\infty_{0,n}} \to \color{blue}C_n$, and $\iota_1: {\color{brown}E^\infty_{1,n-1}} \to 	\color{blue}C_n$ in Figure~\ref{Lift}. $\iota_p$ is called the \textbf{total embedding} of $E^\infty_{p,n-p}$.

\begin{figure}[htb]
  \begin{minipage}[c]{1.1\linewidth}
    \centering
    \psfrag{$C_n$}{$\color{blue}C_n$}
    \psfrag{$A_n$}{$\color{darkgreen}A_n$}
    \psfrag{$E^2_{1,n-1}$}{${\color{brown}E^\infty_{1,n-1}}$}
    \psfrag{$E^2_{0,n}$}{${\color{darkgreen}E^\infty_{0,n}}$}
    \psfrag{$H_n(C)$}{$\color{blue}H_n(C)$}
    \psfrag{$iota$}{$\iota$}
    \psfrag{$iota0$}{$\iota_0$}
    \psfrag{$iota1$}{$\iota_1$}
    \includegraphics[width=0.55\textwidth]{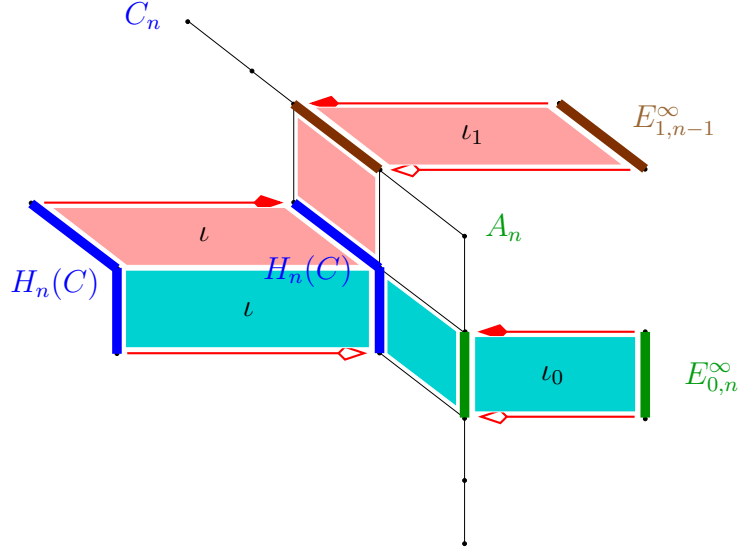}
  \end{minipage}
  \caption{$\iota$ lifts $\iota_0$ and $\iota_1$}
  \label{Lift}
\end{figure}

\begin{coro}\label{coro_2-filt}
The generalized embedding $\iota$ in Figure~\ref{Lift} lifts both total embeddings $\iota_0$ and $\iota_1$. Thus the two lifts $\epsilon_0 := {\iota_0}/{\iota}$ and $\epsilon_1 := {\iota_1}/{\iota}$ are generalized embeddings that form a filtration system of $H_n(C)$, visualized in Figure~\ref{2-Filtration}. More precisely, $\epsilon_0$ is an (ordinary) embedding and $\epsilon_1$ is a generalized isomorphism.
\end{coro}
\begin{figure}[htb]
  \begin{minipage}[c]{1.1\linewidth}
    \centering
    \psfrag{$E^2_{1,n-1}$}{${\color{brown}E^\infty_{1,n-1}}$}
    \psfrag{$E^2_{0,n}$}{${\color{darkgreen}E^\infty_{0,n}}$}
    \psfrag{$H_n(C)$}{$\color{blue}H_n(C)$}
    \psfrag{$epsilon0$}{$\epsilon_0 = \iota_0 /  \iota$}
    \psfrag{$epsilon1$}{$\epsilon_1 = \iota_1 /  \iota$}
    \includegraphics[width=0.55\textwidth]{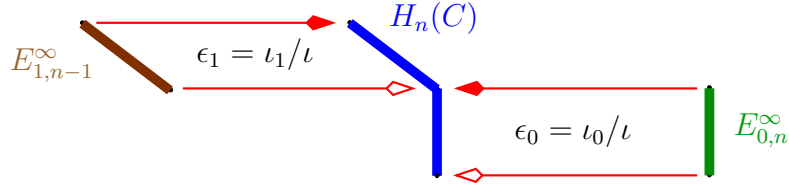}
  \end{minipage}
  \caption{The filtration of $H_n(C)$ given by the $2$-filtration system $\epsilon_0$, $\epsilon_1$}
  \label{2-Filtration}
\end{figure}
\begin{proof}
There are two obvious degenerations of the subobject lattice(s) in Figure~\ref{Lemma}, both leading to a sublattice of the lattice in Figure~\ref{Lift}, one for the pair $(\beta,\gamma)=(\iota,\iota_0)$ and the other for $(\beta,\gamma)=(\iota,\iota_1)$. In other words: Following the two filled areas from right to left constructs $\epsilon_0:= {\iota}^{-1}\circ{\iota_0}$ and $\epsilon_1:={\iota}^{-1}\circ{\iota_1}$.
\end{proof}

\begin{coro}[Generalized inverse]\label{geninv}
  Let $\psi:S \to T$ be a generalized epimorphism. Then there exists a \emph{unique} generalized epimorphism $\psi^{-1}:T \to S$, such that $\psi^{-1}\circ\psi = (\id_{S},\ker\psi)$ and $\psi\circ\psi^{-1} = (\id_{T},\Aid\psi)$. $\psi^{-1}$ is called the \textbf{generalized inverse} of $\psi$. In particular, if $\psi$ is an (ordinary) epimorphism, then $\psi^{-1}$ is a generalized isomorphism, and vice versa.
\end{coro}
\begin{proof}
  Since $\psi$ lifts $\id_T$ define $\psi^{-1}:=\id_T/\psi$.
\end{proof}

Rephrasing short exact sequences (also called $1$-extensions) in terms of $2$-filtration systems is now an easy application of this corollary. In particular, the information in the short exact sequence (\ref{extensionE}) is fully captured by the $2$-filtration system in Figure~\ref{2-Filtration}. This is last step of remedying the defect mentioned while introducing the short exact sequence (\ref{extension}) in Section~\ref{les}.

\section{Spectral sequences of filtered complexes}\label{filt}

Everything substantial already happened in Sections~\ref{les} and \ref{genmor}. Here we only show how the ideas already developed for $2$-filtrations and their $2$-step spectral sequences easily generalize to $m$-filtrations and their $m$-step spectral sequences.

We start by recalling the construction of the \textbf{spectral sequence associated to a filtered complex}. The exposition till Theorem~\ref{mainthm} closely follows \cite[Section~5.4]{WeiHom}. We also remain loyal to our use of subobject lattices as they are able to sum up a considerable amount of relations in one picture.

Consider a chain complex $C$ with (an ascending) filtration $F_p C$. The complementary degree $q$ and the total degree $n$ are dropped for better readability. Define the natural projection $F_pC \to F_p C/F_{p-1} C =: E^0_p$. It is elementary to check that the \textbf{subobjects of $r$-approximate cycles}
\[
  A^r_p := \ker (F_p C \to F_p C / F_{p-r} C) = \{ c \in F_p C \mid \partial c \in F_{p-r} C\}
\]
satisfy the relations of Figure~\ref{E_r}, with $Z^r_p := A^r_p + F_{p-1} C$, $B^r_p := \partial A^{r-1}_{p+(r-1)} + F_{p-1} C$, and $E^r_p := Z^r_p / B^r_p$. These definitions deviate a bit from those in \cite[Section~5.4]{WeiHom}. Here $Z^r_p$ and $B^r_p$ sit between $F_p C$ and $F_{p-1} C$. His $Z^r_p$ and $B^r_p$ are the projections under $\eta_p$ onto $E^0_p:=F_p C / F_{p-1} C$ of the ones here, and hence sit in the objects of the $0$-th sheet $E^0_p$. The subobject lattice in Figure~\ref{E_r} should by now be considered an old friend as it is ubiquitous throughout all our arguments.

\begin{figure}[htb]
  \begin{minipage}[c]{1.1\linewidth}
    \centering
    \psfrag{$F_p C$}{$F_p C$}
    \psfrag{$F_{p-1} C$}{$F_{p-1} C$}
    \psfrag{$E^r_p$}{$E^r_p$}
    \psfrag{$\iota^r_p$}{}
    \psfrag{$Z^r_p$}{$Z^r_p$}
    \psfrag{$B^r_p$}{$B^r_p$}
    \psfrag{$A^r_p$}{$A^r_p$}
    \psfrag{$A^{r-1}_{p-1}$}{$A^{r-1}_{p-1}$}
    \psfrag{$partial(A^{r-1})$}{$\partial A^{r-1}_{p+(r-1)}$}
    \psfrag{$partial(A^r)$}{$\partial A^r_{p-1+(r)}$}
   \includegraphics[width=0.4\textwidth]{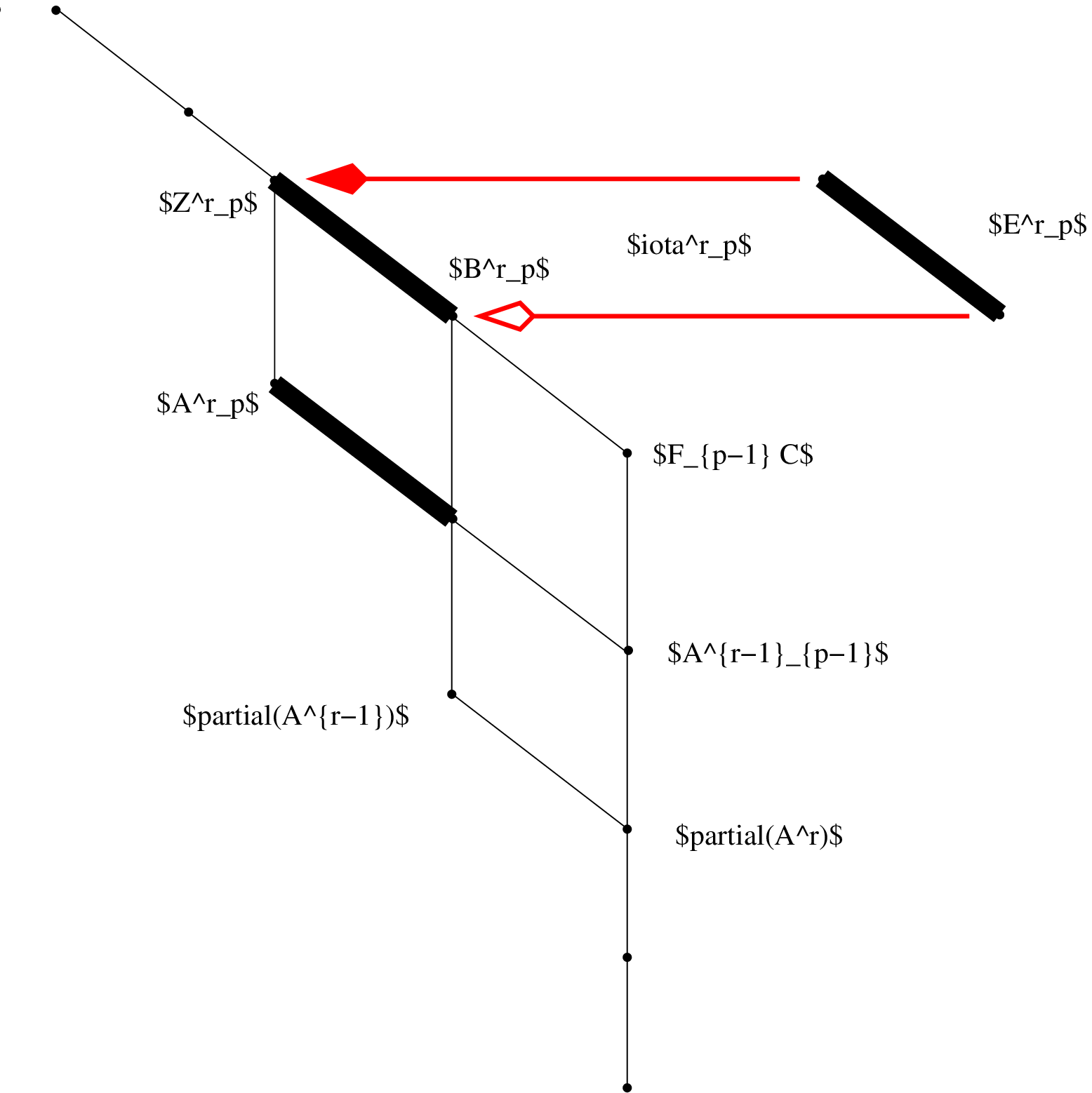}
  \end{minipage}
  \caption{The fundamental subobject lattice}
  \label{E_r}
\end{figure}

\bigskip
Setting $Z^\infty_p := \cap_{r=0}^\infty Z^r_p$ and $B^\infty_p := \cup_{r=0}^\infty B^r_p$ completes the 
\textbf{tower} of subobjects
\[
  F_{p-1} C =
  B^0_p \leq B^1_p \leq \cdots \leq B^r_p \leq \cdots \leq
  B^\infty_p \leq Z^\infty_p
  \leq \cdots \leq Z^r_p \leq \cdots \leq Z^1_p \leq Z^0_p
  = F_p C
\]
between $F_{p-1}C$ and $F_p C$.

From Figure~\ref{E_r} it is immediate that
\[
  E^r_p := \frac{Z^r_p}{B^r_p} \cong \frac{A^r_p}{\partial A^{r-1}_{p+(r-1)}+A^{r-1}_{p-1}}.
\]
It is now routine to verify that the total boundary operator $\partial$ induces morphisms
\[
  \partial^r_p:E^r_p \to E^r_{p-r}.
\]
And as mentioned in Section~\ref{les} these morphisms decrease the filtration degree by $r$. They complete the definition of the $r$-th sheet.

From the point of view of effective computations the above definition of $\partial^r_p$ \emph{is constructive}, as long as all involved objects are of \emph{finite type}. In fact, it can easily be turned into an algorithm using generalized maps. But since the filtered complexes relevant to our applications are total complexes of bicomplexes, the description of this algorithm is deferred to Section~\ref{bicomplexes}, where the bicomplex structure will be exploited.

To see that $(E^r)$ indeed defines a spectral sequence it remains to show the taking homology in $E^r$ reproduces the objects of $E^{r+1}$ up to (natural) isomorphisms. For this purpose one uses the statements encoded in Figure~\ref{E_r} to deduce that
\begin{enumerate}
  \item[(a)] ${Z^r_p}/{Z^{r+1}_p} \cong {B^{r+1}_{p-r}}/{B^r_{p-r}}$,
  \item[(b)] $\ker \partial^r_p \cong {Z^{r+1}_p}/{B^r_p}$,
  \item[(c)] $\img \partial^r_{p+r} \cong {B^{r+1}_p}/{B^r_p}$, and finally
  \item[(d)] $E^{r+1}_p \cong \ker \partial^r_p / \img \partial^r_{p+r}$.
\end{enumerate}
(c) follows from (a) and (b) since they state that $\partial^r_p$ decomposes as
\[
  E^r_p := {Z^r_p}/{B^r_p} \xrightarrow{\text{(b)}} {Z^r_p}/{Z^{r+1}_p}
 \xrightarrow{\text{(a)}} {B^{r+1}_{p-r}}/{B^r_{p-r}} \hookrightarrow {Z^r_{p-r}}/{B^r_{p-r}} =: E^r_{p-r},
\]
showing that $\img \partial^r_p \cong {B^{r+1}_p}/{B^r_p}$. Now replace $p$ by $p+r$. (d) is the first isomorphism theorem applied to $E^{r+1}_p:=Z^{r+1}_p/B^{r+1}_p$ using (b) and (c). For (a) and (b) see \cite[Lemma 5.4.7 and the subsequent discussion]{WeiHom}.

Before stating the main theorem we make some remarks about convergence. Recall that all our filtrations are assumed finite of length $m$. This means that $E^m$ runs out of arrows and thus stabilizes, i.e. $E^m = E^{m+1} = \cdots$. We already saw this for $m=2$ in Section~\ref{les}. As customary, the stable sheet is denoted by $E^\infty$. The stable form of Figure~\ref{E_r} is Figure~\ref{E_infty}, where $A_p^\infty:=\cup_{r=0}^\infty A^r_p$ and $A^\infty_{p+\infty}:=\cup_{r=0}^\infty A^r_{p+r}$.
\begin{figure}[htb]
  \begin{minipage}[c]{1.1\linewidth}
    \centering
    \psfrag{$F_p C$}{$F_p C$}
    \psfrag{$F_{p-1} C$}{$F_{p-1} C$}
    \psfrag{$E^r_p$}{$E^\infty_p$}
    \psfrag{$\iota^r_p$}{$\iota_p$}
    \psfrag{$Z^r_p$}{$Z^\infty_p$}
    \psfrag{$B^r_p$}{$B^\infty_p$}
    \psfrag{$A^r_p$}{$A^\infty_p$}
    \psfrag{$A^{r-1}_{p-1}$}{$A^\infty_{p-1}$}
    \psfrag{$partial(A^{r-1})$}{$\partial A^\infty_{p+\infty}$}
    \psfrag{$partial(A^r)$}{$\partial A^\infty_{p-1+\infty}$}
   \includegraphics[width=0.4\textwidth]{E_r.eps}
  \end{minipage}
  \caption{The stable fundamental subobject lattice}
  \label{E_infty}
\end{figure}

The identities
\begin{equation}\label{infty}
  A^\infty_p = \ker \partial_{\mid F_p C} = \{ c \in F_p C \mid \partial c = 0 \}
\end{equation}
and
\begin{equation}\label{p+infty}
  \partial A^\infty_{p+\infty} = \img \partial_{\mid F_p C} = \partial C \cap F_p C
\end{equation}
are direct consequences of the respective definitions.

\begin{axiom}[Beyond $E^\infty$]\label{mainthm}
  Let $C$ be a chain complex with an ascending $m$-step filtration. The generalized embedding $\iota:H(C) \to C$ divides all generalized embeddings  $\iota_p: E^\infty_p \to C$, called the \textbf{total embedding} of $E^\infty_p$. The quotients $\epsilon_p := \iota_p / \iota$ form an $m$-filtration system which computes the induced filtration on $H(C)$.
\end{axiom}
\begin{proof}
  We only need to verify the two lifting conditions for the pairs $(\iota,\iota_p)$. Everything else is immediate. For the morphism aid subobjects of $\iota_p$ and $\iota$ we have
\[
  L_{\iota_p}=\partial A^\infty_{p+\infty}+F_{p-1} C
\] (see Figure~\ref{E_infty}) and
\[
  L_\iota = \partial C.
\]
Define
\[
  L:=L_{\iota_p}+L_\iota=(\partial A^\infty_{p+\infty}+F_{p-1} C) + \partial C = \partial C + F_{p-1} C.
\]
\textbf{Condition (im)}: Since $\Img \iota_p = A^\infty_p + F_{p-1} C$ and $\Img \iota = \ker \partial$ we obtain
\begin{eqnarray*}
 \Img \widetilde{\iota}_p \leq \Img \widetilde{\iota} & \iff & (A^\infty_p + F_{p-1} C) + L \leq \ker\partial + L \\
 &\iff& A^\infty_p + \partial C+ F_{p-1} C \leq \ker\partial + F_{p-1} C.
\end{eqnarray*}
Now $\partial C \leq \ker \partial$ since $\partial$ is a boundary operator, and $A^\infty_p \leq \ker \partial$ by (\ref{infty}). \\
\textbf{Condition (eff)}:
\begin{eqnarray*}
  \Img\iota_p \cap L &=& (\partial C + F_{p-1} C) \cap (A^\infty_p + F_{p-1} C) \\
  & \stackrel{\text{(\ref{infty})}} {=} &(\partial C \cap F_p C) + F_{p-1} C \\
  & \stackrel{\text{(\ref{p+infty})}}{=} & \partial A^\infty_{p+\infty} + F_{p-1} C \\
  & = & L_{\iota_p}.
\end{eqnarray*}
The lifting lemma \ref{lifting_lemma} is now applicable, yielding the generalized embeddings $\epsilon_p := \iota_p / \iota$.
\end{proof}

Corollary \ref{coro_2-filt} is the special case $m=2$. In light of Remark \ref{comp} the theorem thus states that the induced filtration on the total (co)homology is effectively computable, as long as the generalized embeddings $\iota$ and $\iota_p$ are effectively computable for all $p$. Hence, it can be viewed as a (more) constructive version of the \textbf{classical convergence theorem} of spectral sequences of filtered complexes, a version that makes use of generalized embeddings:

\begin{axiom}[Classical convergence theorem {\cite[Thm.~5.5.1]{WeiHom}}]
  Let $C$ be chain complex with a finite filtration $(F_pC)$. Then the associated spectral sequence converges to $H_*(C)$:
  \[
    E^0_{pq} := F_p C_{p+q} / F_{p-1} C_{p+q} \Longrightarrow H_{p+q}(C).
  \]
\end{axiom}

Everything in this section can be reformulated for \emph{co}chain complexes and cohomological spectral sequences.

\section{Spectral sequences of bicomplexes}\label{bicomplexes}

Bicomplexes are one of the main sources for filtered complexes in algebra. They are less often encountered in topology. A \textbf{homological bicomplex} is a lattice $B=(B_{pq})$ ($p,q\in\Z$) of objects connected with \textbf{vertical} morphisms $\partial^\mathrm{v}$ pointing \emph{down} and \textbf{horizontal} morphisms $\partial^\mathrm{h}$ pointing \emph{left}, such that $\partial^\mathrm{v}\partial^\mathrm{h}+\partial^\mathrm{h}\partial^\mathrm{v}=0$.
\[
  \xymatrix{
    *=0{}
     \jumpdir{q}{/:a(90).5cm/}
    &
    B_{02}
    \ar@{.}[rd]
    \ar[d]^{\partial_\mathrm{v}}
    &
    B_{12}
    \ar@{.}[rd]
    \ar[d]^{\partial_\mathrm{v}}
    \ar[l]_{\partial_\mathrm{h}}
    &
    B_{22}
    \ar[d]^{\partial_\mathrm{v}}
    \ar[l]_{\partial_\mathrm{h}}
    \\
    &
    B_{01}
    \ar@{.}[rd]
    \ar[d]^{\partial_\mathrm{v}}
    &
    B_{11}
    \ar@{.}[rd]
    \ar[d]^{\partial_\mathrm{v}}
    \ar[l]_{\partial_\mathrm{h}}
    &
    B_{21}
    \ar[d]^{\partial_\mathrm{v}}
    \ar[l]_{\partial_\mathrm{h}}
    \\
    &
    B_{00}
    &
    B_{10}
    \ar[l]_{\partial_\mathrm{h}}
    &
    B_{20}
    \ar[l]_{\partial_\mathrm{h}}
    \\
    *=0{} \ar[uuu] \ar[rrr]
    &&&
    *=0{}
    \jumpdir{p}{/:a(-100).5cm/}
  }
\]

The \textbf{sign trick} $\hat{\partial}_{pq}:=(-1)^p \partial^\mathrm{v}_{pq}$ converts the \textbf{anticommutative} squares into commutative ones, and hence turns the bicomplex into a \textbf{complex of complexes} connected with chain maps as morphisms, and vice versa.

The direct sum of objects $\Tot(B)_n := \bigoplus_{p+q=n} B_{pq}$ together with the \textbf{total boundary operator} $\partial_n := \sum_{p+q=n} \partial^\mathrm{v}_{pq}+\partial^\mathrm{h}_{pq}$ form a chain complex called the \textbf{the total complex} associated to the bicomplex $B$. $\partial \partial=0$ is a direct consequence of the anticommutativity.

The vertical morphisms $d_\mathrm{v}$ of a \textbf{cohomological bicomplex} $(B^{pq})$ point \emph{up} and the horizontal $d_\mathrm{h}$ point \emph{right}. We assume all bicomplexes bounded, i.e.\  only finitely many objects $B_{pq}$ are different from zero.

There exists a natural so-called \textbf{column filtration} of the total complex $\Tot(B)$ such that the $0$-th page $E^0=(E^0_{pq}) = (B_{pq})$ of the spectral sequence associated to this filtration consists of the vertical arrows of $B$ and the $1$-st page $E^1$ contains morphisms induced by the vertical ones. Its associated spectral sequence is called the \textbf{first spectral sequence} of the bicomplex $B$ and is often denoted by ${}^\mathrm{I}E$. For a formal definition see \cite[Def.~5.6.1]{WeiHom}. The \textbf{second spectral sequence} is the (first) spectral sequence of the \textbf{transposed bicomplex} $^\mathrm{tr} B = (^\mathrm{tr} B_{pq}) := (B_{qp})$. It is denoted by ${}^\mathrm{II}E$. Note that $\Tot(B)=\Tot(^\mathrm{tr}B)$, only the two corresponding filtrations and their induced filtrations on the total cohomology $H_*(\Tot(B))$ differ in general. So the short notation
\[
  {}^\mathrm{I}E^a_{pq} \Longrightarrow H_{p+q}(\Tot(B)) \Longleftarrow  {}^\mathrm{II}E^a_{pq}
\]
refers in general to two different filtrations of $H_{p+q}(\Tot(B))$.

Here is an algorithm using generalized maps to compute the arrows
\[
  \partial^r_{pq}:E^r_{pq} \to E^r_{p-r,q+r-1}
\]
of the $r$-th term of the homological (first) spectral sequence $E^r$. Again, everything can be easily adapted for the cohomological case. Denote by
\[
  \alpha_S: E^r_{pq} \to B_{pq} \quad \mbox{resp.} \quad  \alpha_T: E^r_{p-r,q+r-1} \to B_{p-r,q+r-1}
\]
the generalized embedding of the \emph{s}ource resp.\  \emph{t}arget of $\partial^r_{pq}$ into the object $B_{pq}=E^0_{pq} \leq \Tot(B)_{p+q}$ resp.\  $B_{p-r,q+r-1}\leq\Tot(B)_{p+q-1}$. These so-called \textbf{absolute embeddings} are the successive compositions of the \textbf{relative embeddings} $E^r_{pq} \to E^{r-1}_{pq}$. For the sake of completeness we also mention the \textbf{total embeddings}
\[
  \iota_S: E^r_{pq} \to \Tot(B)_{p+q} \quad \mbox{resp.} \quad  \iota_T: E^r_{p-r,q+r-1} \to \Tot(B)_{p+q-1},
\]
the compositions of $\alpha_S$ resp.\  $\alpha_T$ with the \emph{generalized} embeddings\footnote{It identifies $B_{pq}$ with the \emph{subfactor} of $\Tot(B)_{p+q}$ dictated by the filtration.} $B_{pq} \to \Tot(B)_{p+q}$ resp.\  $B_{p-r,q+r-1} \to \Tot(B)_{p+q-1}$.
\begin{figure}[htb]
  \begin{minipage}[c]{1\linewidth}
    \centering
    \psfrag{$E^infty_{pq}$}{$E^\infty_{pq}$}
    \psfrag{$E^r_{pq}$}{$E^r_{pq}$}
    \psfrag{$E^0_{pq}$}{$E^0_{pq}$}
    \psfrag{$C_{p+q}$}{$C_{p+q}=\Tot(B)_{p+q}$}
    \psfrag{$alpha$}{$\alpha_{pq}$}
    \psfrag{$iota$}{$\iota_{pq}$}
    \psfrag{$...$}{$\cdots$}
    \includegraphics[width=0.6\textwidth]{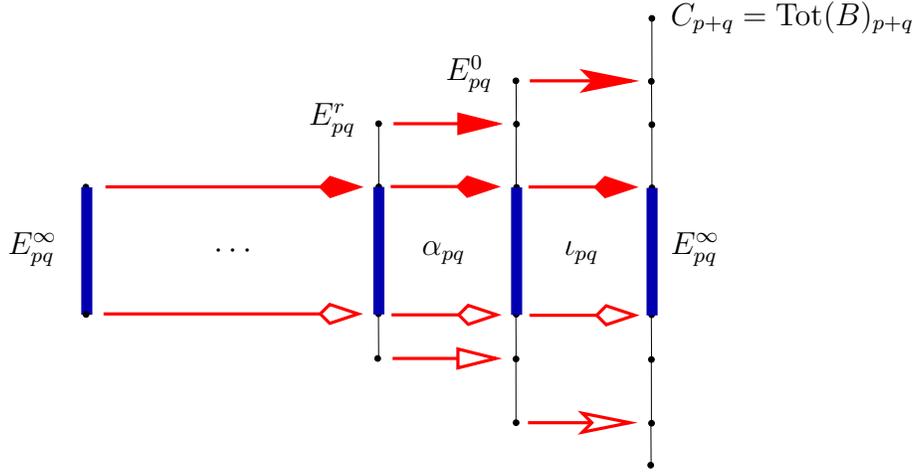}
  \end{minipage}
  \caption{The relative, absolute, and total embeddings}
  \label{TotEmb}
\end{figure}

For $r>1$ let
\[ 
  h^r_{pq}: {\color{brown} B_{pq}} \to {\color{blue} \bigoplus_{i=1}^{r-1} B_{p-i,q+i-1}}
  \quad \mbox{and} \quad
  v^r_{p-r+1,q+r-1}: {\color{red} B_{p-r+1,q+r-1}} \to {\color{blue} \bigoplus_{i=1}^{r-1} B_{p-i,q+i-1}}
\]
be the restrictions of the total boundary operator $\partial_{p+q}$ to the specified sources and targets. Similarly, for $r>2$ let
\[
  l^r_{pq}:{\color{magenta} \bigoplus_{i=1}^{r-2} B_{p-i,q+i}} \to {\color{blue}\bigoplus_{i=1}^{r-1} B_{p-i,q+i-1}},
\]
again the restriction of the total boundary operator $\partial_{p+q}$ to the specified source and target.
\[
  \xymatrix@!C=5.5pc{
    E^r_{p-r,q+r-1} \ar@{_{(}->}[d]_{\alpha_T} \\
    {\color{darkgreen} B_{p-r,q+r-1}} & {\color{red} B_{p-r+1,q+r-1}} \ar[l]_{\partial^\mathrm{h}} \ar[d]^{\partial^\mathrm{v}}\\
    & {\color{blue} B_{p-r+1,q+r-2}} \ar@{.}[rd] & {\color{magenta} B_{p-r+2,q+r-2}} \ar[l] \ar@{.>}[d] \ar@{--}[rd] \\
    & & {\color{blue} \ddots} \ar@{.}[rd] & {\color{magenta} B_{p-1,q+1}} \ar@{.>}[l] \ar[d] \\
    & & & {\color{blue} B_{p-1,q}} & \ar[l]_<(.25){\partial^\mathrm{h}} {\color{brown} B_{pq}} \\
    & & & & E_{pq} \ar@{^{(}->}[u]^{\alpha_S}
  }
\]

We distinguish four cases $r=0,1,2$, and $r>2$.
\begin{enumerate}
  \item[$r=0$:] $\partial^0_{pq}:=\partial^\mathrm{v}_{pq}$. Note that $E^0_{pq}:=B_{pq}$.
  \item[$r=1$:] $\partial^1_{pq}:=\alpha_T ^{-1} \circ (\partial^\mathrm{h}_{pq}\circ\alpha_S)$.
  \item[$r=2$:] $\partial^2_{pq}:=\alpha_T ^{-1} \circ (\partial^\mathrm{h}_{p-1,q+1} \circ (-\beta^{-1} \circ (h^2_{pq}\circ\alpha_S)))$, where $\beta:=v^2_{p-1,q+1}$. Note that $h^2_{pq} = \partial^\mathrm{h}_{pq}$ and $v^2_{p-1,q+1}=\partial^\mathrm{v}_{p-1,q+1}$.
  \item[$r>2$:] $\partial^r_{pq}:=\alpha_T^{-1} \circ (\partial^\mathrm{h}_{p-r+1,q+r-1} \circ (-\beta^{-1} \circ (h^r_{pq}\circ\alpha_S)))$, with $\beta:=(v^r_{p-r+1,q+r-1},l^r_{pq})$, the coar\-sening of $v^r_{p-r+1,q+r-1}$ with aid $l_{pq}$. We say: $v^r_{p-r+1,q+r-1}$ aided by $l^r_{pq}$ lifts $h^r_{pq}\circ\alpha_S$.
\end{enumerate}

\smallskip
We announced an algorithm and provided closed formulas. This is the true value of generalized maps mentioned in the Introduction. As an easy exercise, the reader might try to rephrase the diagram chasing of the snake lemma as a closed formula in terms of generalized maps. The concept of a generalized map evolved during the implementation of the {\tt homalg} package in {\sf GAP} \cite{homalg-package}.

It follows from Remark~\ref{comp} that the spectral sequence of a finite type bounded bicomplex (in fact, of a finite type complex with finite filtration) over a computable ring is effectively computable (cf.~Def.~\ref{compdef}).  The {\tt homalg} package \cite{homalg-package} contains routines to compute spectral sequences of bicomplexes.

\medskip
We end this section with a simple example from linear algebra. Let $k$ be a field and $\lambda\in k$ a field element. The {\sc Jordan}-form matrix
\[
  J(\lambda) =
      \left(\begin{matrix}
      \lambda & 1 & \cdot \\
      \cdot & \lambda & 1 \\
      \cdot & \cdot & \lambda
    \end{matrix}\right) \in k^{3\times 3}
\]
turns $V := k^{1\times 3}$ into a left $k[x]$-module (of finite length), where $x$ acts via $J(\lambda)$, i.e.\  $x v := J(\lambda) v$ for all $v\in V$. The $k[x]$-module $V$ is filtered and the filtrations stems from a bicomplex:

\begin{exmp}[\textbf{Spectrum} of an endomorphism]
  Let $k$ be a field and $\lambda\in k$. Consider the second quadrant bicomplex $B_\lambda$
  \[
    \xymatrix{
      B_{-2,3} \ar[d]_{\left( \begin{smallmatrix} x-\lambda \end{smallmatrix} \right)} \\
      B_{-2,2} & B_{-1,2} \ar[l]_{\left( \begin{smallmatrix} -1 \end{smallmatrix} \right)}
                        \ar[d]_{-\left( \begin{smallmatrix} x-\lambda \end{smallmatrix} \right)} \\
      & B_{-1,1} & B_{0,1} \ar[l]_{\left( \begin{smallmatrix} -1 \end{smallmatrix} \right)}
                            \ar[d]_{\left( \begin{smallmatrix} x-\lambda \end{smallmatrix} \right)} \\
      & & B_{0,0}
    }
  \]
  with $B_{0,0}=B_{0,1}=B_{-1,1}=B_{-1,2}=B_{-2,2}=B_{-2,3}=k[x]$, all other spots being zero. The total complex contains exactly two nontrivial $k[x]$-modules at degrees $0$ and $1$ and a single nontrivial morphism
  \[
    \partial_1(\lambda) : \xymatrix@1{ \Tot(B)_1=k[x]^{1\times 3} \ar[r] & k[x]^{1\times 3} = \Tot(B)_0}
  \]
  with matrix
  \[
    x\Id - J(\lambda) =
    \left(\begin{matrix}
      x-\lambda & -1 & \cdot \\
      \cdot & x-\lambda & -1 \\
      \cdot & \cdot & x-\lambda
    \end{matrix}\right).
  \]
  The first spectral sequences ${}^\mathrm{I}E$ lives in the second quadrant and stabilizes already at ${}^\mathrm{I}E^1 =: {}^\mathrm{I}E^\infty$
  \[
    \xymatrix@!=0.8pc{
      \cdot\ar@{.}[rd] & \cdot \ar@{.}[rd] & \cdot \\
      {}^\mathrm{I}E^1_{-2,-2} \ar@{.}[rd] & \cdot \ar@{.}[rd] & \cdot\\
      \cdot \ar@{.}[rd] & {}^\mathrm{I}E^1_{-1,-1} \ar@{.}[rd] & \cdot \\
      \cdot & \cdot & {}^\mathrm{I}E^1_{0,0}
    }
  \]
  with ${}^\mathrm{I}E^\infty_{0,0}={}^\mathrm{I}E^\infty_{-1,-1}={}^\mathrm{I}E^\infty_{-2,-2}=k[x]/\langle x - \lambda \rangle$.
  
  The second spectral sequences ${}^\mathrm{II}E$ lives in the fourth quadrant, has only zero arrows at levels $1$ and $2$
  \[
    \xymatrix@!=0.9pc{
      {}^\mathrm{II}E^1_{0,0} \ar@{.}[rd] & \cdot \ar@{.}[rd] & \cdot \ar@{.}[rd] & \cdot \\
      \cdot \ar@{.}[rd] & \cdot \ar@{.}[rd] & \cdot \ar@{.}[rd] & \cdot \\
      \cdot & \cdot & \cdot & {}^\mathrm{II}E^1_{3,-2}
    } \quad\quad
    \xymatrix@!=0.8pc{
      {}^\mathrm{II}E^2_{0,0} \ar@{.}[rd] & \cdot \ar@{.}[rd] & \cdot \ar@{.}[rd] & \cdot \\
      \cdot \ar@{.}[rd] & \cdot \ar@{.}[rd] & \cdot \ar@{.}[rd] & \cdot \\
      \cdot & \cdot & \cdot & {}^\mathrm{II}E^2_{3,-2}
    }
  \]
  with ${}^\mathrm{II}E^1_{0,0}={}^\mathrm{II}E^1_{3,-2}=k[x]$, and hence ${}^\mathrm{II}E^2_{0,0}={}^\mathrm{II}E^2_{3,-2}=k[x]={}^\mathrm{II}E^3_{0,0}={}^\mathrm{II}E^3_{3,-2}$. At level $3$ there exists a single nonzero arrow $\partial^3_{3,-2}$ with matrix $\left(\begin{matrix}(x-\lambda)^3\end{matrix}\right)$:
  \[
    \xymatrix@!=0.9pc{
      {}^\mathrm{II}E^3_{0,0} \ar@{.}[rd] & \cdot \ar@{.}[rd] & \cdot \ar@{.}[rd] & \cdot \\
      \cdot \ar@{.}[rd] & \cdot \ar@{.}[rd] & \cdot \ar@{.}[rd] & \cdot\\
      \cdot & \cdot & \cdot & {}^\mathrm{II}E^3_{3,-2} \ar[uulll] |{\partial^3_{3,-2}}
    }
  \]
  ${}^\mathrm{II}E$ finally collapses to its $p$-axes at  ${}^\mathrm{II}E^4 =: {}^\mathrm{II}E^\infty$
  \[
    \xymatrix@!=0.9pc{
      {}^\mathrm{II}E^4_{0,0} \ar@{.}[rd] & \cdot \ar@{.}[rd] & \cdot \ar@{.}[rd] & \cdot \\
      \cdot \ar@{.}[rd] & \cdot \ar@{.}[rd] & \cdot \ar@{.}[rd] & \cdot \\
      \cdot & \cdot & \cdot & \cdot
    }
  \]
  with ${}^\mathrm{II}E^\infty_{0,0}  =  k[x] / \langle ( x - \lambda )^3\rangle$, providing a spectral sequence proof for the elementary fact that
  \[
    \coker\partial_1(\lambda) \cong k[x]/\langle ( x - \lambda )^3\rangle.
  \]
Conversely, this isomorphism implies that the matrix of the morphism $\partial^3_{3,-2}$ is equal to $\left(\begin{matrix}( x - \lambda )^3\end{matrix}\right)$, up to a unit $a\in k^\times$.
\end{exmp}

\section{The {\sc Cartan-Eilenberg} resolution of a complex}

The \textbf{{\sc Cartan-Eilenberg} resolution} generalizes the \textbf{horse shoe lemma} in the following sense: The horse shoe lemma produces a \textbf{simultaneous} projective resolution\footnote{We will only refer to projective resolutions as they are more relevant to effective computations.}
\[
  \xymatrix{
    & 0 \ar[d] & 0 \ar[d]  & \cdots & 0 \ar[d] \\
    0 & M' \ar[l] \ar[d] & P'_0 \ar[l] \ar[d] & \cdots \ar[l] & P'_d \ar[l] \ar[d] & 0 \ar[l] \\
    0 & M \ar[l] \ar[d] & P_0 \ar[l] \ar[d] & \cdots \ar[l] & P_d \ar[l] \ar[d] & 0 \ar[l] \\
    0 & M'' \ar[l] \ar[d] & P''_0 \ar[l] \ar[d] & \cdots \ar[l] & P''_d \ar[l] \ar[d] & 0 \ar[l] \\
    & 0 & 0 & \cdots & 0
  }
\]
of a short exact sequence $0 \xleftarrow{} M'' \xleftarrow{} M \xleftarrow{} M' \xleftarrow{} 0$, where simultaneous means that each row is a projective resolution and all columns are exact. Now let us look at this threefold resolution in the following way: The short exact sequence defines a $2$-step filtration of the object $M$ with graded parts $M'$ and $M''$ and the horse shoe lemma states that any resolutions of the graded parts can be put together to a resolution of the total object $M$. In fact, as $P_i''$ is projective, it follows that the total object $P_i$ must even be the direct sum of the graded parts $P_i'$ and $P_i''$. The non-triviality of the filtration on $M$ is reflected in the fact that the morphisms of the total resolution $P_*$ are in general not merely the direct sum of the morphisms in the resolutions $P_*'$ and $P_*''$ of the graded parts $M'$ and $M''$. This statement can now be generalized to $m$-step filtrations simply by applying the ($2$-step) horse shoe lemma inductively.

Now consider a complex $(C,\partial)$, which is not necessarily exact. On each object $C_n$ the complex structure induces a $3$-step filtration $0\leq B_n \leq Z_n \leq C_n$, with boundaries $B_n := \img \partial_{n+1}$ and cycles $Z_n:=\ker \partial_n$. The above discussion now applies to the three graded parts $B_n$, $H_n:=Z_n/B_n$ and $C_n/Z_n$ and any three resolution thereof can be put together to a resolution of the total object $C_n$. If one takes into account the fact that $\partial_{n+1}$ induces an isomorphism between $C_{n+1}/Z_{n+1}$ and $B_n$ (for all $n$, by the homomorphism theorem), then all total resolutions of all the $C_n$'s can be constructed in a compatible way so that they fit together in one complex of complexes. This complex is called the {\sc Cartan-Eilenberg} resolution of the complex $C$.

A formal version of the above discussion can be found in \cite[Lemma~9.4]{HS} or \cite[Lemma~5.7.2]{WeiHom}. Since the projective horse shoe lemma is constructive, the projective {\sc Cartan-Eilenberg} resolution is so as well.

\section{{\sc Grothendieck}'s spectral sequences}

Let $\mathcal{C} \xleftarrow{F} \mathcal{B} \xleftarrow{G} \mathcal{A}$ be composable functors of abelian categories. The so-called {\sc Gro\-then\-dieck} spectral sequence relates, under mild assumptions, the composition of the derivations of $F$ and $G$ with the derivation of their composition $F\circ G$. There are 16 versions of the {\sc Grothendieck} spectral sequence, depending on whether $F$ resp.\  $G$ is co- or contravariant, and whether $F$ resp.\  $G$ is being left or right derived. Four of them do not use injective resolutions and are therefore rather directly accessible to a computer. In this section two versions out of the four are reviewed: The filtrations of $L\otimes_D M$ and $\Hom_D(M,N)$ mentioned in the Introduction are recovered in the next section as the spectral filtrations induced by these two {\sc Grothen\-dieck} spectral sequences, after appropriately choosing the functors $F$ and $G$.

\begin{axiom}[{\sc Grothendieck} spectral sequence, {\cite[Thm.~11.41]{Rot}}]\label{Gr1}
Let $F$ and $G$ be contravariant functors and let every object in $\mathcal{A}$ and $\mathcal{B}$ has a \emph{finite} projective resolution. Under the assumptions that
\begin{enumerate}
  \item $G$ maps projective objects to $F$-acyclic objects and that
  \item $F$ is left exact,
\end{enumerate}
then there exists a second quadrant homological spectral sequence with
\[
  E^2_{pq}=\RR^{-p}F \circ \RR^qG \Longrightarrow \LL_{p+q}(F\circ G).
\]
\end{axiom}
\begin{proof}
Let $M$ be an object in $\mathcal{A}$ and $P_\bullet=(P_p)$ a finite projective resolution of $M$. Denote by $CE=(CE^{p,q})$ the projective {\sc Cartan-Eilenberg} resolution of the cocomplex $(Q^p):=(G(P_p))$. It exists since $\mathcal{B}$ has enough projectives. Note that $q\leq 0$ since $CE$ is a cohomological bicomplex. Define the homological bicomplex $B=(B_{p,q}):=\left(F(CE^{p,q})\right)$. We call $B$ the \textbf{Grothendieck bicomplex} associated to $M$, $F$, and $G$. It lives in the fourth quadrant and is bounded in both directions. \\
\textbf{The first spectral sequence $^\mathrm{I}E$}: \\
For fixed $p$ the vertical cocomplex $CE^{p,\bullet}$ is, by construction, a projective resolution of $G(P_p)$. Hence $^\mathrm{I}E^1_{pq}=\RR^{-q}F(G(P_p))$. But since $G(P_p)$ is $F$-acyclic by assumption (1), the first sheet collapses to the $0$-th row. The left exactness of $F$ implies that  $R^0F=F$ and hence $^\mathrm{I}E^1_{p0}=(F\circ G)(P_p)$. I.e.\  the $0$-th row of $^\mathrm{I}E^1$ is nothing but the covariant functor $F\circ G$ applied to the projective resolution $(P_p)$ of $M$. The first spectral sequences of $B$ thus stabilizes at level $2$ with the single row $^\mathrm{I}E^2_{n,0}=\LL_n(F\circ G)(M)$. \\
\textbf{The second spectral sequence $^\mathrm{II}E$}: \\
The second spectral sequence of the bicomplex $B$ is by definition the spectral sequence of its transposed $(^\mathrm{tr}B_{pq}):=(B_{qp})$, a second quadrant bicomplex. Obviously $^\mathrm{tr}B=F(^\mathrm{tr}CE)$. By definition, the $q$-th row ${}^\mathrm{II}E^1_{\bullet,q}:=H^\mathrm{vert}_{\bullet,q}(^\mathrm{tr}B)=H^\mathrm{vert}_{\bullet,q}(F(^\mathrm{tr}CE))=F(H_\mathrm{vert}^{\bullet,q}(^\mathrm{tr}CE))$, where the last equality follows from the properties of the {\sc Cartan-Eilenberg} resolution and the additivity of $F$. Now recall that the vertical cohomologies $H_\mathrm{vert}^{\bullet,q}(^\mathrm{tr}CE)$ are for fixed $q$, again by construction, projective resolutions of the cohomology $H^q(G(P_\bullet))=:\RR^q G(M)$. Hence $^\mathrm{II}E^2_{pq} = \RR^{-p}F (\RR^q G(M))$.
\end{proof}

The proof shows that assumptions (1) and (2) only involve the first spectral sequence. Assumption (1) guaranteed the collapse of the first spectral sequence at the first level, while (2) ensures that the natural transformation $F \to \RR^0 F$ is an equivalence. In other words, dropping (2) means replacing $\LL_{p+q}(F\circ G)$ by $\LL_{p+q}(\RR^0F\circ G)$.

\begin{axiom}[{\sc Grothendieck} spectral sequence]\label{Gr2}
Let $F$ be a covariant and $G$ a contravariant functor and let every object in $\mathcal{A}$ and $\mathcal{B}$ has a \emph{finite} projective resolution. Under the assumptions that
\begin{enumerate}
  \item $G$ maps projective objects to $F$-acyclic objects and that
  \item $F$ is right exact,
\end{enumerate}
then there exists a second quadrant cohomological spectral sequence with
\[
  E^2_{pq}=\LL_{-p}F \circ \RR^qG \Longrightarrow \RR^{p+q}(F\circ G).
\]
\end{axiom}
\begin{proof}
Again the first spectral sequence is a fourth quadrant spectral sequence while the second lives in the second quadrant. Assumption (2) ensures that the natural transformation $\LL^0 F \to F$ is an equivalence. The above proof and the subsequent remark can be copied with the obvious modifications.
\end{proof}

\begin{rmrk}[One sided boundedness]\label{bounded}
The existence of finite projective resolutions in $\mathcal{A}$ and $\mathcal{B}$ led the spectral sequences to be bounded in both directions. In order to avoid convergence subtleties it would suffice to assume boundedness in just one direction by requiring that either $\mathcal{A}$ or $\mathcal{B}$ allows finite projective resolutions while the other has enough projectives. The assumption of the existence of \emph{finite} projective resp.\  injective resolutions can be dropped when dealing with the versions of the {\sc Grothendieck} spectral sequences that live in the first resp.\  third quadrant.
\end{rmrk}

\section{Applications}\label{appl}

This section recalls how the natural filtrations mentioned in examples (a), (a'), and (d) of the Introduction can be recovered as \textbf{spectral filtrations}.

Theorems~\ref{Gr1} and \ref{Gr2} admit an obvious generalization. The composed functor $F\circ G$ can be replaced by a functor $H$ that coincides with $F\circ G$ on projectives (for other versions of the {\sc Grothendieck} spectral sequence the ``projectives'' has to be replaced by ``injectives''). As usual, $D$ is an associative ring with $1$. $\Ext^n_D$ and $\Tor_n^D$ are abbreviated as $\Ext^n$ and $\Tor_n$.

\bigskip
\noindent
\textbf{Assumption:} In this section the left \emph{or} right global dimension\footnote{
Recall, the \textbf{left} \textbf{global (homological) dimension} is the supremum over all projective dimensions of \emph{left} $D$-modules (see Subsection~\ref{codegree}). If $D$ is left {\sc Noether}ian, then the left global dimension of $D$ coincides with the \textbf{weak global (homological) dimension}, which is the largest integer $\mu$ such that $\Tor^D_\mu(M,N) \neq 0$ for some right module $M$ and left module $N$, otherwise infinity (cf.~\cite[7.1.9]{MR}). This last definition is obviously left-right symmetric. The same is valid if ``left'' is replaced by ``right''.} of $D$ is assumed finite. The involved spectral sequences will then be bounded in (at least) one direction (see Remark~\ref{bounded}).

\subsection{\texorpdfstring{The double-$\Ext$ spectral sequence and the filtration of $\Tor$}{The double-Ext spectral sequence and the filtration of Tor}}


\begin{coro}[The double-$\Ext$ spectral sequence]\label{doubleExt}
  Let $M$ be a left $D$-module and $L$ a right $D$-module. Then there exists a second quadrant homological spectral sequence with
  \[
    E^2_{pq}=\Ext^{-p}(\Ext^q(M,D),L) \Longrightarrow \Tor_{p+q}(L,M).
  \]
  In particular, there exists an ascending filtration of $\Tor_{p+q}(L,M)$ with $\gr_p \Tor_{p+q}(L,M)$ naturally isomorphic to a subfactor of $\Ext^{-p}(\Ext^q(M,D),L)$, $p\leq 0$.
\end{coro}


The special case $p+q=0$ recovers the filtration of $L\otimes M$ mentioned in Example (a) of the Introduction via the natural isomorphism $L\otimes M \cong \Tor_0 (L,M)$.

\subsubsection{Using the {\sc Grothendieck} bicomplex}\label{HGB}

Corollary~\ref{doubleExt} is a consequence of Theorem~\ref{Gr1} for $F:=\Hom_D(-,L)$ and $G:=\Hom_D(-,D)$, since $F\circ G$ coincides with $L\otimes_D -$ on projectives.

To be able to effectively compute double-$\Ext$ (groups in) the {\sc Grothendieck} bicomplex the ring $D$ must be computable in the sense that \emph{two} sided inhomogeneous linear systems $A_1 X_1 + X_2 A_2 = B$ must be effectively solvable, where $A_1$, $A_2$, and $B$ are matrices over $D$ (see~\cite[Subsection~6.2.4]{BR}). This is immediate for computable commutative rings (cf.~Def.~\ref{compdef}). In \ref{ExtExt} an example over a commutative ring is treated.

\subsubsection{Using the bicomplex $I_L\otimes P^M$}

The \textbf{bifunctoriality} of $\otimes$ leads to the following homological bicomplex
\[
  B := I_L\otimes P^M \cong \Hom(\Hom(P^M,D),I_L),
\]
where $P^M$ is an injective resolution of $M$ and $I_L$ is an injective resolution of $L$. Starting from $r=2$ the first and second spectral sequence of $B$ coincide with those of the {\sc Grothendieck} bicomplex associated to $M$, $F:=\Hom_D(-,L)$, and $G:=\Hom_D(-,D)$. In contrast to the {\sc Grothendieck} bicomplex the bicomplex $B$ is over most of the interesting rings in general highly nonconstructive as an injective resolution enters its definition. In \cite[Lemma~1.1.8]{HL} a sheaf variant of this bicomplex was used to ``compute'' the purity filtration (see below).

\subsubsection{The bidualizing complex}\label{bidual}

Taking $L=D$ as a right $D$-module in Corollary~\ref{doubleExt} recovers the \textbf{bidualizing spectral sequence} of {\sc J.-E.~Roos} \cite{Roos}.
\[
  E^2_{pq}=\Ext^{-p}(\Ext^q(M,D),D) \Longrightarrow \left\{\begin{array}{cc} M & \mbox{ for } p+q = 0, \\ 0 & \mbox{ otherwise.} \end{array}\right.
\]
The {\sc Grothendieck} bicomplex is then known as the \textbf{bidualizing complex}. The case $p+q=0$ defines the \textbf{purity filtration}\footnote{Unlike \cite[Chap.~2, Subsection~4.15]{Bjo}, we only make the weaker assumption stated at the beginning of the section.} $(\tor_{-c} M)$ of $M$, which was motivated in Example (a') of the Introduction. For more details cf.~\cite[Chap.~2, §5,7]{Bjo}.

The module $M_c = E^\infty_{-c,c}$ is for $c=0$ and $c=1$ a submodule of $\Ext^c(\Ext^c(M,D),D)=E^2_{-c,c}$ and for $c\geq 2$ in general only a subfactor. All this is obvious from the shape of the bidualizing spectral sequence.

Since $M_c = \tor_{-c} M / \tor_{-(c+1)}M$ it follows that the \textbf{higher evaluations maps} $\varepsilon_c$
\[
  0 \xrightarrow{} \tor_{-(c+1)}M \xrightarrow{} \tor_{-c} M \xrightarrow{\varepsilon_c} \Ext^c_D(\Ext^c_D(M,D),D)
\]
 mentioned in the Introduction are only a different way of writing the generalized embeddings
\[
  \bar{\varepsilon}_c:M_c \to \Ext^c(\Ext^c(M,D),D).
\]
So without further assumptions $\varepsilon_c$ (resp.\  $\bar{\varepsilon}_c$) is known to be an ordinary morphism (resp.\  embedding) only for $c=0$ and $c=1$. Now assuming that $E^2_{pq}:=\Ext^{-p}(\Ext^q(M,D),D)$ vanishes\footnote{This condition is satisfied for an \textbf{{\sc Auslander} regular} ring $D$: $\Ext^{-p}(\Ext^q(M,D),D) = 0$ for all $p+q>0$ and all $D$-modules $M$. See \cite[Chap.~2: Cor.~5.18, Cor.~7.5]{Bjo}.} for $p+q=1$, then all arrows ending at total degree $p+q=0$ vanish (as they all start at total degree $p+q=1$). It follows that for all $c$ the module $M_c$ is not merely a subfactor of $\Ext^c(\Ext^c(M,D),D)$ but a submodule, or, equivalently, $\varepsilon_c$ (resp.\  $\bar{\varepsilon}_c$) is an ordinary morphism (resp.\  embedding).

In any case the module $\Ext^c(\Ext^c(M,D),D)$ is called the \textbf{reflexive hull} of the \textbf{pure} subfactor $M_c$.

\begin{defn}[Pure, reflexively pure]
 A module $M$ is called \textbf{pure} if it consists of exactly one nontrivial pure subfactor $M_c$ or is zero. A nontrivial module $M$ is called \textbf{reflexively pure} if it is pure and if the generalized embedding $M=M_c \to \Ext^c(\Ext^c(M,D),D)$ is an isomorphism. Define the zero module to be reflexively pure.
\end{defn}

If $M$ is a finitely generated $D$-module, then all ingredients of the bidualizing complex are again finitely generated (projective) $D$-modules, even if the ring $D$ is \emph{non}commutative. It follows that the purity filtration over a computable ring $D$ is effectively computable. A commutative and a noncommutative example are given in \ref{PurityFiltration} and \ref{PurityFiltration:A3} respectively. The latter demonstrates how the purity filtration (as a filtration that always exists) can be used to transform a linear system of PDEs into a triangular form where now a cascade integration strategy can be used to obtain exact solutions. The idea of viewing a linear system of PDEs as a module over an appropriate ring of differential operators was emphasized by {\sc B.~Malgrange} in the late 1960's and according to him goes back to {\sc Emmy Noether}.

\subsubsection{Criterions for reflexive purity}\label{reflexive}

This subsection lists some simple criterions for reflexive purity of modules.

First note that the existence of the bidualizing spectral sequence immediately implies that the set $c(M):=\{c \geq 0 \mid \Ext^c_D(M,D)\neq 0\}$ is empty only if $M=0$. Recall that if $c(M)$ is nonempty, then its minimum is called the \textbf{grade} or \textbf{codimension} of $M$ and denoted by $j(M)$ or $\codim M$. The codimension of the zero module is set to be $\infty$. Further define $\bar{q}(M):=\sup c(M)$ in case $c(M)\neq \emptyset$, and $\infty$ otherwise.

All of the following arguments make use of the shape of the bidualizing spectral sequence in the respective situation.

$\bullet$ If $c(M)$ contains a single element, i.e.\  if $\codim M = \bar{q}(M) =: \bar{q} < \infty$, then $M=M_{\bar{q}}$ is reflexively pure of codimension $\bar{q}$, giving a simple spectral sequence proof of \cite[Thm.~7]{QEB}.

For the remaining criterions assume that $\Ext^{-p}(\Ext^q(M,D),D) = 0$ for $p+q=1$:

$\bullet$ If $\bar{q}:=\bar{q}(M)$ is finite, then $E^2_{-\bar{q},\bar{q}}=E^\infty_{-\bar{q},\bar{q}}$, i.e.\  $M_{\bar{q}}$ is reflexively pure (possibly zero). This generalizes the above criterion (under the assumption just made).

$\bullet$ Now if $M$ is a left (resp.\  right) $D$-module, then assume further that the right (resp.\  left) global dimension $d$ of the ring $D$ is finite. It follows that $E^2_{-c,c}=E^\infty_{-c,c}$ for $c=d$ and $c=d-1$. This means that under the above assumptions the subfactors $M_d$ and $M_{d-1}$ are always reflexively pure\footnote{In case $D=A_n$, the $n$-th {\sc Weyl} algebra over a field, this says that \textbf{holonomic} and \textbf{subholonomic} modules are reflexively pure. See~\cite[Chap.~2, §7]{Bjo}.}.

\subsubsection{Codegree of purity}\label{codegree}

As a {\sc Grothendieck} spectral sequence the bidualizing spectral sequence becomes intrinsic at level $2$. Each $E^2_{-c,c}$ starts to ``shrink'' until it stabilizes at $E^\infty_{-c,c}=M_c$. Motivated by this define the \textbf{codegree of purity} $\cp M$ of a module $M$ as follows:  Set $\cp M$ to $\infty$ if $M$ is not pure. Otherwise $\cp M$ is a tuple of nonnegative integers, the length of which is one plus the number of times $E^a_{-c,c}$ shrinks (nontrivially\footnote{i.e.\  passes to a \emph{true} subfactor.}) for $a\geq 2$ until it stabilizes at $M_c$. The entries of this tuple are the numbers of pages between the drops, i.e.\  the width of the steps in the staircase of objects $(E^a_{-c,c})_{c\geq 2}$. It follows that the sum over the entries of $\cp M$ is the number of pages it takes for $E^2_{-c,c}$ until it reaches $M_c$. In particular, a module is \emph{reflexively pure} if and only if $\cp M = (0)$.

The codegree of purity appears in Examples \ref{PurityFiltration} and \ref{PurityFiltration:A3}. In Example~\ref{CodegreeOfPurity} the codegree of purity is compared with two other classical homological invariants: \\
Recall, the \textbf{projective dimension} of a module $M$ is defined to be the length $d$ of the shortest projective resolution $0\xleftarrow{}M\xleftarrow{}P_0\xleftarrow{} \cdots \xleftarrow{} P_d\xleftarrow{} 0$. \textbf{{\sc Auslander}'s degree of torsion-freeness} of a module $M$ is defined following \cite[Def.~on~p.~2 \& Def.~2.15(b)]{AB} to be the smallest \emph{nonnegative} integer $i$, such that $\Ext^{i+1}(\mathrm{A}(M),D)\neq 0$, otherwise $\infty$, where $\mathrm{A}(M)$ is the so-called \textbf{{\sc Auslander} dual} of $M$ (see also \cite[Def.~5]{QEB}, \cite[Def.~19]{CQR05}). To construct $\mathrm{A}(M)$ take a projective presentation $0\xleftarrow{} M \xleftarrow{} P_0 \xleftarrow{d_1} P_1$ of $M$ and set
\[
  \mathrm{A}(M):=\coker(P_0^* \xrightarrow{d_1^*} P_1^*),
\]
where $d_1^* := \Hom(d_1, D)$ (cf.~\cite[p.~1 \& Def.~2.5]{AB}). Like the syzygies modules, it is proved in  \cite[Prop.~2.6(b)]{AB} that $\mathrm{A}(M)$ is uniquely determined by $M$ up to \textbf{projective equivalence} (see also \cite{Q} and \cite[Thm.~2]{PQ00}). In particular, the degree of torsion-freeness is well-defined. The fundamental exact sequence \cite[(0.1) \& Prop.~2.6(a)]{AB}
\[
  0 \xrightarrow{} \Ext^1_D(\mathrm{A}(M), -) \xrightarrow{}
  M \otimes_D - \xrightarrow{} \Hom_{D}( M^*, - )
  \xrightarrow{} \Ext^2_{D}(\mathrm{A}(M),-) \xrightarrow{} 0,
\]
applied to $D$, characterizes torsion-freeness and reflexivity of the module $M$ (see also \cite[Exercise~IV.7.3]{HS}, \cite[Thm.~6]{CQR05}). For a characterization of projectivity using the degree of torsion-freeness see \cite[Thm.~7]{CQR05}.

The codegree of purity can be defined for quasi-coherent sheaves of modules replacing $D$ by the structure sheaf $\mathcal{O}_X$ or by the dualizing sheaf\footnote{It may even be defined for objects in an abelian category with a dualizing object.} if it exists. It is important to note that the codegree of purity of a coherent sheaf $\mathcal{F}$ of $\mathcal{O}_X$-modules on a projective scheme $X=\Proj(S)$ may differ from the codegree of purity of a graded $S$-module $M$ used to represent $\mathcal{F}=\widetilde{M}=\Proj{M}$. This is mainly due to the fact that $\mathcal{F}=\widetilde{M}$ vanishes for {\sc Artin}ian modules $M$.

There are several obvious ways how one can refine the codegree of purity to get sharper invariants. The codegree of purity is an example of what can be called a \textbf{spectral invariant}.

\subsection{\texorpdfstring{The $\Tor$-$\Ext$ spectral sequence and the filtration of $\Ext$}{The Tor-Ext spectral sequence and the filtration of Ext}}


\begin{coro}[The $\Tor-\Ext$ spectral sequence]\label{TorExt}
  Let $M$ and $N$ be left $D$-modules. Then there exists a second quadrant cohomological spectral sequence with
  \[
    E_2^{pq}=\Tor_{-p}(\Ext^q(M,D),N) \Longrightarrow \Ext^{p+q}(M,N).
  \]
  In particular, there exists a descending filtration of $\Ext^{p+q}(M,N)$ with $\gr^p \Ext^{p+q}(M,N)$ naturally isomorphic to a subfactor of $\Tor_{-p}(\Ext^q(M,D),N)$, $p\leq 0$
\end{coro}


The special case $p+q=0$ recovers the filtration of $\Hom(M,N)$ mentioned in Example (d) of the Introduction via the natural isomorphism $\Hom(M,N) \cong \Ext^0 (M,N)$.

\bigskip
For \textbf{holonomic} modules $M$ over the {\sc Weyl} $k$-algebra $D:=A_n$ the special case formula
\[
  \Hom(M,N) \cong \Tor_n(\Ext^n(M,D),N)
\]
(cf.~\cite[Chap.~2, Thm.~7.15]{Bjo}) was used by {\sc H.~Tsai} and {\sc U.~Walther} in the case when also $N$ is holonomic to compute the finite dimensional $k$-vector space of homomorphisms \cite{TW}.

The induced filtration on $\Ext^1(M,N)$ can be used to attach a numerical invariant to each extension of $M$ with submodule $N$. This gives another example of a \textbf{spectral invariant}.

\subsubsection{Using the {\sc Grothendieck} bicomplex}\label{CGB}

Corollary~\ref{TorExt} is a consequence of Theorem~\ref{Gr2} for $F:=-\otimes_D N$ and $G:=\Hom_D(-,D)$ since $F\circ G$ coincides with $\Hom_D(-,N)$ on projectives. See Example~\ref{TorExt:Grothendieck}.

\subsubsection{Using the bicomplex $\Hom(P^M,P^N)$}\label{HPP}
The \textbf{bifunctoriality} of $\Hom$ leads to the following cohomological bicomplex
\[
  B := \Hom(P^M,P^N) \cong \Hom(P^M,D)\otimes P^N,
\]
where $P^L$ denotes a projective resolution of the module $L$. It is an easy exercise (cf.~\cite[Chap.~2, §4.14]{Bjo}) to show that starting from $r=2$ the first and second spectral sequence of $B$ coincide with those of the {\sc Grothendieck} bicomplex associated to $M$,  $F:=-\otimes_D N$ and $G:=\Hom_D(-,D)$. Both bicomplexes are constructive as only projective resolutions enter their definitions. The bicomplex $B$ has the computational advantage of avoiding the rather expensive {\sc Cartan-Eilenberg} resolution used to define the {\sc Grothendieck} bicomplex. See Example~\ref{TorExt:Bifunctor}. Compare the output of the command {\tt homalgRingStatistics} in Example~\ref{TorExt:Bifunctor} with corresponding output in Example~\ref{TorExt:Grothendieck}.

Since the first spectral sequence of the bicomplex $B:=\Hom(P^M,P^N)$ collapses, a small part of it is often used to compute $\Hom(M,N)$ over a \emph{commutative} ring $D$, as then all arrows of $B$ are again morphisms of $D$-modules. See \cite[p.~104]{GP} and \cite[Subsection~6.2.3]{BR}.

If the ring $D$ is \emph{not} commutative, then the above bicomplex and the {\sc Grothendieck} bicomplex in the previous subsection fail to be $D$-bicomplexes (unless when $M$ or $N$ is a $D$-bimodule). The bicomplexes are even in a lot of applications of interest \emph{not} of finite type over their natural domain of definition. In certain situations there nevertheless exist \emph{quasi-isomorphic} subfactor (bi)complexes which can be used to perform effective computations. In \cite{TW}, cited above, and in the pioneering work \cite{OT} {\sc Kashiwara}'s so-called $V$-filtration is used to extract such subfactors.

\newpage
\appendix

\section{The triangulation algorithm}

\begin{defn}[{Computable ring \cite[Subsection~1.2]{BR}}]\label{compdef}
  A left and right noetherian ring is called \textbf{computable} if there exists an algorithm which solves one sided inhomogeneous linear systems $XA=B$ and $AX=B$, where $A$ and $B$ are matrices with entries in $D$. The word ``solves'' means: The algorithm can decide if a solution exists, and, if solvable, is able to compute a particular solution as well as a finite generating set of solutions of the corresponding homogeneous system.
\end{defn}

From now on the ring $D$ is assumed computable. Let $M$ be a finitely generated left $D$-module. Then $M$ is finitely presented,  i.e. there exists a matrix $\tM \in D^{p\times q}$, viewed as a morphism $\tM:D^{1\times p} \to D^{1\times q}$, such that $\coker \tM \cong M$. $\tM$ is called a \textbf{matrix of relations} or a \textbf{presentation matrix} for $M$. It forms the beginning of a free resolution
\[
  0 \xleftarrow{} M \xleftarrow{} D^{1\times q} \xleftarrow{d_1 = \tM} D^{1\times p} \xleftarrow{d_2} D^{1\times p_2} \xleftarrow{d_3} \cdots.
\]
$d_i$ is called the $i$-th syzygies matrix of $M$ and $K_i:=\coker d_{i+1}$ the $i$-th syzygies module. $K_i$ is uniquely determined by $M$ up to \textbf{projective equivalence}.

Now suppose that $M$ is endowed with an $m$-filtration $F=(F_p M)$. We will sketch an algorithm that, starting from a presentation matrix $\tM\in D^{p \times q}$ for $M$ and presentation matrices $\tM_p$ for the graded parts $M_p:=\gr_p M$, computes another \textbf{upper triangular} presentation matrix $\tM_F$ of the form\footnote{Note that choosing a generating system of $M$ adapted to the filtration $F$ is not enough to produce a triangular presentation matrix, as changing the set of generators only corresponds to column operations on $\tM$.}
\[
  \tM_F = \left(
    \begin{matrix}
      \tM_{p_{m-1}} & * & \cdots & \cdots & * \\
      & \tM_{p_{m-2}} & * & \cdots & * \\
      & & \ddots & \ddots & \vdots \\
      & & & \tM_{p_1} & * \\
      & & & & \tM_{p_0}
    \end{matrix}
  \right) \in D^{p'\times q'}
\]
and an isomorphism $\coker \tM_F \xrightarrow{\cong} \coker \tM$ given by a matrix $\mathtt{T}\in D^{q' \times q}$:

Let $(\psi_p)$ be an ascending $m$-filtration system computing $F$ (cf.~Def.~\ref{system}). To start the induction take $p$ to be the highest degree $p_{m-1}$ in the filtration and set $\ttF_p \tM := \tM$. Since
\[
  \mu_p:=\psi_p:M_p=\coker\tM_p \to \coker \ttF_p\tM
\]
is a generalized isomorphism, its unique generalized inverse exists and is an epimorphism (cf.~Cor.~\ref{geninv}), which we denote by $\pi_p:F_p M \to M_p$. (Note: $\coker \ttF_p\tM=F_p M= M$ for $p=p_{m-1}$.) Since $D$ is computable we are able to determine (a matrix of) an injective morphism $\iota_p$ mapping onto the kernel of $\pi_p$. The source of $\iota_p$ is a module isomorphic to $F_{p-1} M$, which we also denote by $F_{p-1} M$. No confusion can occur since we will only refer to the latter. All maps in the exact-rows diagram
\[
  \xymatrix{
    0 & M_p \ar@{=}[d] \ar[l] & P_0 \ar[d]^{\eta_0} \ar[l]_{\nu} & K_1 \ar[d]^{\eta} \ar[l]_{\tM_p} & 0 \ar[l] \\
    0 & M_p \ar[l]& F_p M \ar[l]_{\pi_p} & F_{p-1}M \ar[l]_{\iota_p} & 0 \ar[l]
  }
\]
are computable, where $P_0$ is a free $D$-module and $K_1$ is the $1$-st syzygies module of $\tM_p$: $\eta_0$ is computable since $P_0$ is free and $\eta$ is computable since $\iota_p$ is injective (see~\cite[Subsection~3.1]{BR}). This yields the short exact sequence
\[
  0 \xrightarrow{} K_1 \xrightarrow{\kappa:=\left(\begin{matrix}\tM_p & \eta \end{matrix}\right)} P_0 \oplus F_{p-1} M \xrightarrow{\rho:=\left(\begin{matrix}-\eta_0 \\ \iota_p\end{matrix}\right)} F_p M \xrightarrow{} 0.
\]
Hence, the cokernel of $\kappa:=\left(\begin{matrix}\tM_p & \eta \end{matrix}\right)$ is isomorphic to $F_p M$ which therefore admits a presentation matrix of the form
\[
  \tM^p_F = \left(\begin{matrix} \tM_p & \eta \\ 0 & \ttF_{p-1} \tM \end{matrix}\right),
\]
where $\ttF_{p-1}\tM$ is a presentation matrix of $F_{p-1} M$ (for more details see~\cite[Subsection~7.1]{BB}). If $\chi: P_0 \oplus F_{p-1} M \xrightarrow{} \coker\kappa=\coker \tM^p_F$ denotes the natural epimorphism and $\rho:=\left(\begin{matrix}-\eta_0 \\ \iota_p\end{matrix}\right)$, then the matrix $\mathtt{T}^p$ of the morphism $T^p := \rho \circ\chi^{-1}$ is an isomorphism between $\coker\tM^p_F$ and $\coker \ttF_p\tM$. By the induction hypothesis we have
\[
  \widetilde{\tM}^{p+1}_F
  :=
  \left(\begin{array}{c|c} \mathrm{stable}_p & \eta_p \\ \hline 0 & \ttF_p \tM \end{array}\right)
  =
  \left( \begin{array}{c|c|c}
     \mathrm{stable}_{p+1} & * & * \\ \hline
     0 & M_{p+1} &  * \\ \hline
     0 & 0 & \ttF_p \tM
   \end{array} \right)
   =
   \left(\begin{array}{c|c} \mathrm{stable}_{p+1} & *\  * \\ \hline 0 & \tM^{p+1}_F \end{array}\right)
\]
with $\coker \widetilde{\tM}^{p+1}_F \cong \coker \tM$. (Since $p$ was decreased by one the old $\ttF_{p-1} \tM$ is now addressed as $\ttF_p \tM$, etc.). Before proceeding inductively on the submatrix $\ttF_p \tM$ of $\widetilde{\tM}^{p+1}_F$ take the quotient
\[
  \mu_p:=(\iota_{p_{m-1}}\circ\cdots\circ \iota_{p+1})^{-1}\circ\psi_p:M_p=\coker\tM_p \to \coker \ttF_p \tM,
\]
which is like $\mu_{p+1}$ again a generalized isomorphism. Note that matrix $\mathtt{T}^p$  of the morphism $T^p:=\rho \circ\chi^{-1}$ providing the isomorphism between $\coker\tM^p_F$ and $\coker \ttF_p\tM$ now has to be multiplied from the right to the submatrix $\eta_p$ of $\widetilde{\tM}^{p+1}_F$ which lies above $\ttF_p \tM$. This completes the induction. The algorithm terminates with $\tM_F := \widetilde{\tM}^{p_0}_F$ and $\mathtt{T}$ is the composition of all the successive column operations on $\tM$. \qed

\bigskip
The above algorithm is implemented in {\tt homalg}  package \cite{homalg-package} under the name {\tt Isomor\-phism\-OfFiltration}. It takes an $m$-filtration system $(\psi_p)$ of $M=\coker\tM$ as its input and returns an isomorphism $\tau:\coker \tM_F \to \coker \tM$ with a triangular presentation matrix $\tM_F$, as described above. {\tt IsomorphismOfFiltration} will be extensively used in the examples in Appendix~\ref{Examples}.

\section{Examples with {\sf GAP}'s {\tt homalg}}\label{Examples}

The packages {\tt homalg}, {\tt IO\_ForHomalg}, and {\tt RingsForHomalg} are assumed loaded:

\medskip
\noindent
{\color{blue}\verb+gap>+}{\color{OrangeRed}\verb+ LoadPackage( "RingsForHomalg" );+}
\begin{verbatim}
true
\end{verbatim}

For details see the {\tt homalg} project \cite{homalg-project}.


\begin{exmp}[{\tt LeftPresentation}]\label{LeftPresentation}
Define a left module $W$ over the polynomial ring $D:=\Q[x,y,z]$. Also define its right mirror $Y$.

\medskip
\noindent
{\small
{\color{blue}\verb+gap>+}{\color{OrangeRed}\verb+ Qxyz := HomalgFieldOfRationalsInDefaultCAS( ) * "x,y,z";;+} \\
{\color{blue}\verb+gap>+}{\color{OrangeRed}\verb+ wmat := HomalgMatrix( "[ \+
\begin{verbatim}
x*y,  y*z,    z,        0,         0,    \
x^3*z,x^2*z^2,0,        x*z^2,     -z^2, \
x^4,  x^3*z,  0,        x^2*z,     -x*z, \
0,    0,      x*y,      -y^2,      x^2-1,\
0,    0,      x^2*z,    -x*y*z,    y*z,  \
0,    0,      x^2*y-x^2,-x*y^2+x*y,y^2-y \
]", 6, 5, Qxyz );
\end{verbatim}
}
\begin{verbatim}
<A homalg external 6 by 5 matrix>
\end{verbatim}
{\color{blue}\verb+gap>+}{\color{OrangeRed}\verb+ W := LeftPresentation( wmat );+}
\begin{verbatim}
<A left module presented by 6 relations for 5 generators>
\end{verbatim}
{\color{blue}\verb+gap>+}{\color{OrangeRed}\verb+ Y := Hom( Qxyz, W );+}
\begin{verbatim}
<A right module on 5 generators satisfying 6 relations>
\end{verbatim}
}
\end{exmp}


\begin{exmp}[Homological {\tt GrothendieckSpectralSequence}]\label{ExtExt}
Example~\ref{LeftPresentation} continued. Compute the double-$\Ext$ spectral sequence for $F:=\Hom(-,Y)$, $G:=\Hom(-,D)$, and the $D$-module $W$. This is an example for Subsection~\ref{HGB}.

\medskip
\noindent
{\small
{\color{blue}\verb+gap>+}{\color{OrangeRed}\verb+ F := InsertObjectInMultiFunctor( Functor_Hom, 2, Y, "TensorY" );+}
\begin{verbatim}
<The functor TensorY>
\end{verbatim}
{\color{blue}\verb+gap>+}{\color{OrangeRed}\verb+ G := LeftDualizingFunctor( Qxyz );;+}\\
{\color{blue}\verb+gap>+}{\color{OrangeRed}\verb+ II_E := GrothendieckSpectralSequence( F, G, W );+}
\begin{verbatim}
<A stable homological spectral sequence with sheets at levels [ 0 .. 4 ]
each consisting of left modules at bidegrees [ -3 .. 0 ]x[ 0 .. 3 ]>
\end{verbatim}
{\color{blue}\verb+gap>+}{\color{OrangeRed}\verb+ Display( II_E );+}
\begin{verbatim}
The associated transposed spectral sequence:

a homological spectral sequence at bidegrees
[ [ 0 .. 3 ], [ -3 .. 0 ] ]
---------
Level 0:

 * * * *
 * * * *
 . * * *
 . . * *
---------
Level 1:

 * * * *
 . . . .
 . . . .
 . . . .
---------
Level 2:

 s s s s
 . . . .
 . . . .
 . . . .

Now the spectral sequence of the bicomplex:

a homological spectral sequence at bidegrees
[ [ -3 .. 0 ], [ 0 .. 3 ] ]
---------
Level 0:

 * * * *
 * * * *
 . * * *
 . . * *
---------
Level 1:

 * * * *
 * * * *
 . * * *
 . . . *
---------
Level 2:

 * * s s
 * * * *
 . * * *
 . . . *
---------
Level 3:

 * s s s
 * s s s
 . . s *
 . . . *
---------
Level 4:

 s s s s
 . s s s
 . . s s
 . . . s
\end{verbatim}
{\color{blue}\verb+gap>+}{\color{OrangeRed}\verb+ filt := FiltrationBySpectralSequence( II_E, 0 );+}
\begin{verbatim}
<An ascending filtration with degrees [ -3 .. 0 ] and graded parts:
   0:	<A non-zero left module presented by 33 relations for 23 generators>
  -1:	<A non-zero left module presented by 37 relations for 22 generators>
  -2:	<A non-zero left module presented by 20 relations for 8 generators>
  -3:	<A non-zero left module presented by 29 relations for 4 generators>
of
<A non-zero left module presented by 111 relations for 37 generators>>
\end{verbatim}
{\color{blue}\verb+gap>+}{\color{OrangeRed}\verb+ ByASmallerPresentation( filt );+}
\begin{verbatim}
<An ascending filtration with degrees [ -3 .. 0 ] and graded parts:
   0:	<A non-zero left module presented by 25 relations for 16 generators>
  -1:	<A non-zero left module presented by 30 relations for 14 generators>
  -2:	<A non-zero left module presented by 18 relations for 7 generators>
  -3:	<A non-zero left module presented by 12 relations for 4 generators>
of
<A non-zero left module presented by 48 relations for 20 generators>>
\end{verbatim}
{\color{blue}\verb+gap>+}{\color{OrangeRed}\verb+ m := IsomorphismOfFiltration( filt );+}
\begin{verbatim}
<An isomorphism of left modules>
\end{verbatim}
}
\end{exmp}


\begin{exmp}[{\tt PurityFiltration}]\label{PurityFiltration}
Example~\ref{LeftPresentation} continued. This is an example for Subsections~\ref{bidual}~and~\ref{codegree}.

\medskip
\noindent
{\small
{\color{blue}\verb+gap>+}{\color{OrangeRed}\verb+ filt := PurityFiltration( W );+}
\begin{verbatim}
<The ascending purity filtration with degrees [ -3 .. 0 ] and graded parts:
   0:	<A codegree-[ 1, 1 ]-pure rank 2 left module presented by
         3 relations for 4 generators>
  -1:	<A codegree-1-pure codim 1 left module presented by
         4 relations for 3 generators>
  -2:	<A cyclic reflexively pure codim 2 left module presented by
         2 relations for a cyclic generator>
  -3:	<A cyclic reflexively pure codim 3 left module presented by
         3 relations for a cyclic generator>
of
<A non-pure rank 2 left module presented by 6 relations for 5 generators>>
\end{verbatim}
{\color{blue}\verb+gap>+}{\color{OrangeRed}\verb+ W;+}
\begin{verbatim}
<A non-pure rank 2 left module presented by 6 relations for 5 generators>
\end{verbatim}
{\color{blue}\verb+gap>+}{\color{OrangeRed}\verb+ m := IsomorphismOfFiltration( filt );+}
\begin{verbatim}
<An isomorphism of left modules>
\end{verbatim}
{\color{blue}\verb+gap>+}{\color{OrangeRed}\verb+ IsIdenticalObj( Range( m ), W );+}
\begin{verbatim}
true
\end{verbatim}
{\color{blue}\verb+gap>+}{\color{OrangeRed}\verb+ Source( m );+}
\begin{verbatim}
<A left module presented by 12 relations for 9 generators (locked)>
\end{verbatim}
{\color{blue}\verb+gap>+}{\color{OrangeRed}\verb+ Display( last );+}
\begin{verbatim}
0,  0,   x, -y,0,1, 0,    0,  0,
x*y,-y*z,-z,0, 0,0, 0,    0,  0,
x^2,-x*z,0, -z,1,0, 0,    0,  0,
0,  0,   0, 0, y,-z,0,    0,  0,
0,  0,   0, 0, x,0, -z,   0,  1,
0,  0,   0, 0, 0,x, -y,   -1, 0,
0,  0,   0, 0, 0,-y,x^2-1,0,  0,
0,  0,   0, 0, 0,0, 0,    z,  0,
0,  0,   0, 0, 0,0, 0,    y-1,0,
0,  0,   0, 0, 0,0, 0,    0,  z,
0,  0,   0, 0, 0,0, 0,    0,  y,
0,  0,   0, 0, 0,0, 0,    0,  x 
Cokernel of the map

Q[x,y,z]^(1x12) --> Q[x,y,z]^(1x9),

currently represented by the above matrix
\end{verbatim}
{\color{blue}\verb+gap>+}{\color{OrangeRed}\verb+ Display( filt );+}
\begin{verbatim}
Degree 0:

0,  0,   x, -y,
x*y,-y*z,-z,0, 
x^2,-x*z,0, -z 
Cokernel of the map

Q[x,y,z]^(1x3) --> Q[x,y,z]^(1x4),

currently represented by the above matrix
----------
Degree -1:

y,-z,0,   
x,0, -z,  
0,x, -y,  
0,-y,x^2-1
Cokernel of the map

Q[x,y,z]^(1x4) --> Q[x,y,z]^(1x3),

currently represented by the above matrix
----------
Degree -2:

Q[x,y,z]/< z, y-1 >
----------
Degree -3:

Q[x,y,z]/< z, y, x >
\end{verbatim}
{\color{blue}\verb+gap>+}{\color{OrangeRed}\verb+ Display( m );+}
\begin{verbatim}
1,   0,    0,  0,   0, 
0,   -1,   0,  0,   0, 
0,   0,    -1, 0,   0, 
0,   0,    0,  -1,  0, 
-x^2,-x*z, 0,  -z,  0, 
0,   0,    x,  -y,  0, 
0,   0,    0,  0,   -1,
0,   0,    x^2,-x*y,y, 
x^3, x^2*z,0,  x*z, -z 

the map is currently represented by the above 9 x 5 matrix
\end{verbatim}
}
\end{exmp}


\begin{exmp}[{\tt PurityFiltration}, \emph{non}commutative]\label{PurityFiltration:A3}
This is a \emph{non}commutative example for Subsections~\ref{bidual}~and~\ref{codegree}. Let $A_3 := \Q[x,y,z]\langle D_x, D_y, D_z \rangle$ be the $3$-dimensional {\sc Weyl} algebra.

\medskip
\noindent
{\small
{\color{blue}\verb+gap>+}{\color{OrangeRed}\verb+ A3 := RingOfDerivations( Qxyz, "Dx,Dy,Dz" );;+}\\
{\color{blue}\verb+gap>+}{\color{OrangeRed}\verb+ nmat := HomalgMatrix( "[ \+
\begin{verbatim}
3*Dy*Dz-Dz^2+Dx+3*Dy-Dz,           3*Dy*Dz-Dz^2,     \
Dx*Dz+Dz^2+Dz,                     Dx*Dz+Dz^2,       \
Dx*Dy,                             0,                \
Dz^2-Dx+Dz,                        3*Dx*Dy+Dz^2,     \
Dx^2,                              0,                \
-Dz^2+Dx-Dz,                       3*Dx^2-Dz^2,      \
Dz^3-Dx*Dz+Dz^2,                   Dz^3,             \
2*x*Dz^2-2*x*Dx+2*x*Dz+3*Dx+3*Dz+3,2*x*Dz^2+3*Dx+3*Dz\
]", 8, 2, A3 );
\end{verbatim}
}
\begin{verbatim}
<A homalg external 8 by 2 matrix>
\end{verbatim}
{\color{blue}\verb+gap>+}{\color{OrangeRed}\verb+ N := LeftPresentation( nmat );+}
\begin{verbatim}
<A left module presented by 8 relations for 2 generators>
\end{verbatim}
{\color{blue}\verb+gap>+}{\color{OrangeRed}\verb+ filt := PurityFiltration( N );+}
\begin{verbatim}
<The ascending purity filtration with degrees [ -3 .. 0 ] and graded parts:
   0:	<A zero left module>
  -1:	<A cyclic reflexively pure codim 1 left module presented by 
         1 relation for a cyclic generator>
  -2:	<A cyclic reflexively pure codim 2 left module presented by 
         2 relations for a cyclic generator>
  -3:	<A cyclic reflexively pure codim 3 left module presented by 
         3 relations for a cyclic generator>
of
<A non-pure codim 1 left module presented by 8 relations for 2 generators>>
\end{verbatim}
{\color{blue}\verb+gap>+}{\color{OrangeRed}\verb+ II_E := SpectralSequence( filt );+}
\begin{verbatim}
<A stable homological spectral sequence with sheets at levels [ 0 .. 2 ]
each consisting of left modules at bidegrees [ -3 .. 0 ]x[ 0 .. 3 ]>
\end{verbatim}
{\color{blue}\verb+gap>+}{\color{OrangeRed}\verb+ Display( II_E );+}
\begin{verbatim}
The associated transposed spectral sequence:

a homological spectral sequence at bidegrees
[ [ 0 .. 3 ], [ -3 .. 0 ] ]
---------
Level 0:

 * * * *
 . * * *
 . . * *
 . . . *
---------
Level 1:

 * * * *
 . . . .
 . . . .
 . . . .
---------
Level 2:

 s . . .
 . . . .
 . . . .
 . . . .

Now the spectral sequence of the bicomplex:

a homological spectral sequence at bidegrees
[ [ -3 .. 0 ], [ 0 .. 3 ] ]
---------
Level 0:

 * * * *
 . * * *
 . . * *
 . . . *
---------
Level 1:

 * * * *
 . * * *
 . . * *
 . . . .
---------
Level 2:

 s . . .
 . s . .
 . . s .
 . . . .
\end{verbatim}
{\color{blue}\verb+gap>+}{\color{OrangeRed}\verb+ m := IsomorphismOfFiltration( filt );+}
\begin{verbatim}
<An isomorphism of left modules>
\end{verbatim}
{\color{blue}\verb+gap>+}{\color{OrangeRed}\verb+ IsIdenticalObj( Range( m ), N );+}
\begin{verbatim}
true
\end{verbatim}
{\color{blue}\verb+gap>+}{\color{OrangeRed}\verb+ Source( m );+}
\begin{verbatim}
<A left module presented by 6 relations for 3 generators (locked)>
\end{verbatim}
{\color{blue}\verb+gap>+}{\color{OrangeRed}\verb+ Display( last );+}
\begin{verbatim}
Dx,-1/3,-2/9*x,
0, Dy,  -1/3,  
0, Dx,  1,     
0, 0,   Dz,    
0, 0,   Dy,    
0, 0,   Dx     
Cokernel of the map

R^(1x6) --> R^(1x3), ( for R := Q[x,y,z]<Dx,Dy,Dz> )

currently represented by the above matrix
\end{verbatim}
{\color{blue}\verb+gap>+}{\color{OrangeRed}\verb+ Display( filt );+}
\begin{verbatim}
Degree 0:

0
----------
Degree -1:

Q[x,y,z]<Dx,Dy,Dz>/< Dx > 
----------
Degree -2:

Q[x,y,z]<Dx,Dy,Dz>/< Dy, Dx >
----------
Degree -3:

Q[x,y,z]<Dx,Dy,Dz>/< Dz, Dy, Dx >
\end{verbatim}
{\color{blue}\verb+gap>+}{\color{OrangeRed}\verb+ Display( m );+}
\begin{verbatim}
1,                1,     
-3*Dz-3,          -3*Dz, 
-3*Dz^2+3*Dx-3*Dz,-3*Dz^2

the map is currently represented by the above 3 x 2 matrix
\end{verbatim}
}
\end{exmp}


\begin{exmp}[Cohomological {\tt GrothendieckSpectralSequence}]\label{TorExt:Grothendieck}
Example~\ref{LeftPresentation} continued. Compute the $\Tor$-$\Ext$ spectral sequence for the triple $F:=-\otimes W$, $G:=\Hom(-,D)$, and the $D$-module $W$. This is an example for Subsection~\ref{CGB}.

\medskip
\noindent
{\small
{\color{blue}\verb+gap>+}{\color{OrangeRed}\verb+ F := InsertObjectInMultiFunctor( Functor_TensorProduct, 2, W, "TensorW" );+}
\begin{verbatim}
<The functor TensorW>
\end{verbatim}
{\color{blue}\verb+gap>+}{\color{OrangeRed}\verb+ G := LeftDualizingFunctor( Qxyz );;+}\\
{\color{blue}\verb+gap>+}{\color{OrangeRed}\verb+ II_E := GrothendieckSpectralSequence( F, G, W );+}
\begin{verbatim}
<A stable cohomological spectral sequence with sheets at levels [ 0 .. 4 ]
each consisting of left modules at bidegrees [ -3 .. 0 ]x[ 0 .. 3 ]>
\end{verbatim}
{\color{blue}\verb+gap>+}{\color{OrangeRed}\verb+ homalgRingStatistics(Qxyz);+}
\begin{verbatim}
rec( BasisOfRowModule := 110, BasisOfColumnModule := 16, 
  BasisOfRowsCoeff := 50, BasisOfColumnsCoeff := 60, DecideZeroRows := 241, 
  DecideZeroColumns := 31, DecideZeroRowsEffectively := 51, 
  DecideZeroColumnsEffectively := 63, SyzygiesGeneratorsOfRows := 184, 
  SyzygiesGeneratorsOfColumns := 63 )
\end{verbatim}
{\color{blue}\verb+gap>+}{\color{OrangeRed}\verb+ Display( II_E );+}
\begin{verbatim}
The associated transposed spectral sequence:

a cohomological spectral sequence at bidegrees
[ [ 0 .. 3 ], [ -3 .. 0 ] ]
---------
Level 0:

 * * * *
 * * * *
 . * * *
 . . * *
---------
Level 1:

 * * * *
 . . . .
 . . . .
 . . . .
---------
Level 2:

 s s s s
 . . . .
 . . . .
 . . . .

Now the spectral sequence of the bicomplex:

a cohomological spectral sequence at bidegrees
[ [ -3 .. 0 ], [ 0 .. 3 ] ]
---------
Level 0:

 * * * *
 * * * *
 . * * *
 . . * *
---------
Level 1:

 * * * *
 * * * *
 . * * *
 . . . *
---------
Level 2:

 * * s s
 * * * *
 . * * *
 . . . *
---------
Level 3:

 * s s s
 . s s s
 . . s *
 . . . s
---------
Level 4:

 s s s s
 . s s s
 . . s s
 . . . s
\end{verbatim}
{\color{blue}\verb+gap>+}{\color{OrangeRed}\verb+ filt := FiltrationBySpectralSequence( II_E, 0 );+}
\begin{verbatim}
<A descending filtration with degrees [ -3 .. 0 ] and graded parts:
  -3:	<A non-zero cyclic left module presented by
         3 relations for a cyclic generator>
  -2:	<A non-zero left module presented by 17 relations for 6 generators>
  -1:	<A non-zero left module presented by 19 relations for 9 generators>
   0:	<A non-zero left module presented by 13 relations for 10 generators>
of
<A left module presented by 66 relations for 41 generators>>
\end{verbatim}
{\color{blue}\verb+gap>+}{\color{OrangeRed}\verb+ ByASmallerPresentation( filt );+}
\begin{verbatim}
<A descending filtration with degrees [ -3 .. 0 ] and graded parts:
  -3:	<A non-zero cyclic left module presented by
         3 relations for a cyclic generator>
  -2:	<A non-zero left module presented by 12 relations for 4 generators>
  -1:	<A non-zero left module presented by 18 relations for 8 generators>
   0:	<A non-zero left module presented by 11 relations for 10 generators>
of
<A left module presented by 21 relations for 12 generators>>
\end{verbatim}
{\color{blue}\verb+gap>+}{\color{OrangeRed}\verb+ m := IsomorphismOfFiltration( filt );+}
\begin{verbatim}
<An isomorphism of left modules>
\end{verbatim}
}
\end{exmp}


\begin{exmp}[$\Tor$-$\Ext$ spectral sequence]\label{TorExt:Bifunctor}
Here we compute the $\Tor$-$\Ext$ spectral sequence of the bicomplex $B :=\Hom(P^W,D)\otimes P^W$. This is an example for Subsection~\ref{HPP}.

\medskip
\noindent
{\small
{\color{blue}\verb+gap>+}{\color{OrangeRed}\verb+ P := Resolution( W );+}
\begin{verbatim}
<A right acyclic complex containing 3 morphisms of left modules at degrees 
[ 0 .. 3 ]>
\end{verbatim}
{\color{blue}\verb+gap>+}{\color{OrangeRed}\verb+ GP := Hom( P );+}
\begin{verbatim}
<A cocomplex containing 3 morphisms of right modules at degrees [ 0 .. 3 ]>
\end{verbatim}
{\color{blue}\verb+gap>+}{\color{OrangeRed}\verb+ FGP := GP * P;+}
\begin{verbatim}
<A cocomplex containing 3 morphisms of left complexes at degrees [ 0 .. 3 ]>
\end{verbatim}
{\color{blue}\verb+gap>+}{\color{OrangeRed}\verb+ BC := HomalgBicomplex( FGP );+}
\begin{verbatim}
<A bicocomplex containing left modules at bidegrees [ 0 .. 3 ]x[ -3 .. 0 ]>
\end{verbatim}
{\color{blue}\verb+gap>+}{\color{OrangeRed}\verb+ p_degrees := ObjectDegreesOfBicomplex( BC )[1];+}
\begin{verbatim}
[ 0 .. 3 ]
\end{verbatim}
{\color{blue}\verb+gap>+}{\color{OrangeRed}\verb+ II_E := SecondSpectralSequenceWithFiltration( BC, p_degrees );+}
\begin{verbatim}
<A stable cohomological spectral sequence with sheets at levels [ 0 .. 4 ]
each consisting of left modules at bidegrees [ -3 .. 0 ]x[ 0 .. 3 ]>
\end{verbatim}
{\color{blue}\verb+gap>+}{\color{OrangeRed}\verb+ homalgRingStatistics(Qxyz);+}
\begin{verbatim}
rec( BasisOfRowModule := 109, BasisOfColumnModule := 1, 
  BasisOfRowsCoeff := 48, BasisOfColumnsCoeff := 0, DecideZeroRows := 190, 
  DecideZeroColumns := 1, DecideZeroRowsEffectively := 49, 
  DecideZeroColumnsEffectively := 0, SyzygiesGeneratorsOfRows := 166, 
  SyzygiesGeneratorsOfColumns := 2 )
\end{verbatim}
{\color{blue}\verb+gap>+}{\color{OrangeRed}\verb+ Display( II_E );+}
\begin{verbatim}
The associated transposed spectral sequence:

a cohomological spectral sequence at bidegrees
[ [ 0 .. 3 ], [ -3 .. 0 ] ]
---------
Level 0:

 * * * *
 * * * *
 * * * *
 * * * *
---------
Level 1:

 * * * *
 . . . .
 . . . .
 . . . .
---------
Level 2:

 s s s s
 . . . .
 . . . .
 . . . .

Now the spectral sequence of the bicomplex:

a cohomological spectral sequence at bidegrees
[ [ -3 .. 0 ], [ 0 .. 3 ] ]
---------
Level 0:

 * * * *
 * * * *
 * * * *
 * * * *
---------
Level 1:

 * * * *
 * * * *
 * * * *
 * * * *
---------
Level 2:

 * * s s
 * * * *
 . * * *
 . . . *
---------
Level 3:

 * s s s
 . s s s
 . . s *
 . . . s
---------
Level 4:

 s s s s
 . s s s
 . . s s
 . . . s
\end{verbatim}
{\color{blue}\verb+gap>+}{\color{OrangeRed}\verb+ filt := FiltrationBySpectralSequence( II_E, 0 );+}
\begin{verbatim}
<A descending filtration with degrees [ -3 .. 0 ] and graded parts:
  -3:	<A non-zero cyclic left module presented by 
         3 relations for a cyclic generator>
  -2:	<A non-zero left module presented by 17 relations for 7 generators>
  -1:	<A non-zero left module presented by 25 relations for 12 generators>
   0:	<A non-zero left module presented by 13 relations for 10 generators>
of
<A left module presented by 38 relations for 24 generators>>
\end{verbatim}
{\color{blue}\verb+gap>+}{\color{OrangeRed}\verb+ ByASmallerPresentation( filt );+}
\begin{verbatim}
<A descending filtration with degrees [ -3 .. 0 ] and graded parts:
  -3:	<A non-zero cyclic left module presented by 
3 relations for a cyclic generator>
  -2:	<A non-zero left module presented by 12 relations for 4 generators>
  -1:	<A non-zero left module presented by 21 relations for 8 generators>
   0:	<A non-zero left module presented by 11 relations for 10 generators>
of
<A left module presented by 23 relations for 12 generators>>
\end{verbatim}
{\color{blue}\verb+gap>+}{\color{OrangeRed}\verb+ m := IsomorphismOfFiltration( filt );+}
\begin{verbatim}
<An isomorphism of left modules>
\end{verbatim}
}
\end{exmp}


\begin{exmp}[{\tt CodegreeOfPurity}]\label{CodegreeOfPurity}
For two torsion-free $D$-modules $V$ and $W$ of rank $2$ compute the three homological invariants
\begin{itemize}
  \item projective dimension,
  \item {\sc Auslander}'s degree of torsion-freeness, and
  \item codegree of purity
\end{itemize}
mentioned in Subsection~\ref{codegree} are computed. The codegree of purity is able to distinguish the two modules.

\medskip
\noindent
{\small
{\color{blue}\verb+gap>+}{\color{OrangeRed}\verb+ vmat := HomalgMatrix( "[ \+
\begin{verbatim}
0,  0,  x,-z, \
x*z,z^2,y,0,  \
x^2,x*z,0,y   \
]", 3, 4, Qxyz );
\end{verbatim}
}
\begin{verbatim}
<A homalg external 3 by 4 matrix>
\end{verbatim}
{\color{blue}\verb+gap>+}{\color{OrangeRed}\verb+ V := LeftPresentation( vmat );+}
\begin{verbatim}
<A non-zero left module presented by 3 relations for 4 generators>
\end{verbatim}
{\color{blue}\verb+gap>+}{\color{OrangeRed}\verb+ wmat := HomalgMatrix( "[ \+
\begin{verbatim}
0,  0,  x,-y, \
x*y,y*z,z,0,  \
x^2,x*z,0,z   \
]", 3, 4, Qxyz );
\end{verbatim}
}
\begin{verbatim}
<A homalg external 3 by 4 matrix>
\end{verbatim}
{\color{blue}\verb+gap>+}{\color{OrangeRed}\verb+ W := LeftPresentation( wmat );+}
\begin{verbatim}
<A non-zero left module presented by 3 relations for 4 generators>
\end{verbatim}
{\color{blue}\verb+gap>+}{\color{OrangeRed}\verb+ Rank( V );+}
\begin{verbatim}
2
\end{verbatim}
{\color{blue}\verb+gap>+}{\color{OrangeRed}\verb+ Rank( W );+}
\begin{verbatim}
2
\end{verbatim}
{\color{blue}\verb+gap>+}{\color{OrangeRed}\verb+ ProjectiveDimension( V );+}
\begin{verbatim}
2
\end{verbatim}
{\color{blue}\verb+gap>+}{\color{OrangeRed}\verb+ ProjectiveDimension( W );+}
\begin{verbatim}
2
\end{verbatim}
{\color{blue}\verb+gap>+}{\color{OrangeRed}\verb+ DegreeOfTorsionFreeness( V );+}
\begin{verbatim}
1
\end{verbatim}
{\color{blue}\verb+gap>+}{\color{OrangeRed}\verb+ DegreeOfTorsionFreeness( W );+}
\begin{verbatim}
1
\end{verbatim}
{\color{blue}\verb+gap>+}{\color{OrangeRed}\verb+ CodegreeOfPurity( V );+}
\begin{verbatim}
[ 2 ]
\end{verbatim}
{\color{blue}\verb+gap>+}{\color{OrangeRed}\verb+ CodegreeOfPurity( W );+}
\begin{verbatim}
[ 1, 1 ]
\end{verbatim}
{\color{blue}\verb+gap>+}{\color{OrangeRed}\verb+ filtV := PurityFiltration( V );+}
\begin{verbatim}
<The ascending purity filtration with degrees [ -2 .. 0 ] and graded parts:
   0:	<A codegree-[ 2 ]-pure rank 2 left module presented by
         3 relations for  4 generators>
  -1:	<A zero left module>
  -2:	<A zero left module>
of
<A codegree-[ 2 ]-pure rank 2 left module presented by
3 relations for 4 generators>>
\end{verbatim}
{\color{blue}\verb+gap>+}{\color{OrangeRed}\verb+ filtW := PurityFiltration( W );+}
\begin{verbatim}
<The ascending purity filtration with degrees [ -2 .. 0 ] and graded parts:
   0:	<A codegree-[ 1, 1 ]-pure rank 2 left module presented by 
         3 relations for 4 generators>
  -1:	<A zero left module>
  -2:	<A zero left module>
of
<A codegree-[ 1, 1 ]-pure rank 2 left module presented by
3 relations for 4 generators>>
\end{verbatim}
{\color{blue}\verb+gap>+}{\color{OrangeRed}\verb+ II_EV := SpectralSequence( filtV );+}
\begin{verbatim}
<A stable homological spectral sequence with sheets at levels [ 0 .. 4 ]
each consisting of left modules at bidegrees [ -3 .. 0 ]x[ 0 .. 2 ]>
\end{verbatim}
{\color{blue}\verb+gap>+}{\color{OrangeRed}\verb+ Display( II_EV );+}
\begin{verbatim}
The associated transposed spectral sequence:

a homological spectral sequence at bidegrees
[ [ 0 .. 2 ], [ -3 .. 0 ] ]
---------
Level 0:

 * * *
 * * *
 * * *
 . * *
---------
Level 1:

 * * *
 . . .
 . . .
 . . .
---------
Level 2:

 s . .
 . . .
 . . .
 . . .

Now the spectral sequence of the bicomplex:

a homological spectral sequence at bidegrees
[ [ -3 .. 0 ], [ 0 .. 2 ] ]
---------
Level 0:

 * * * *
 * * * *
 . * * *
---------
Level 1:

 * * * *
 * * * *
 . . * *
---------
Level 2:

 * . . .
 * . . .
 . . * *
---------
Level 3:

 * . . .
 . . . .
 . . . *
---------
Level 4:

 . . . .
 . . . .
 . . . s
\end{verbatim}
{\color{blue}\verb+gap>+}{\color{OrangeRed}\verb+ II_EW := SpectralSequence( filtW );+}
\begin{verbatim}
<A stable homological spectral sequence with sheets at levels [ 0 .. 4 ]
each consisting of left modules at bidegrees [ -3 .. 0 ]x[ 0 .. 2 ]>
\end{verbatim}
{\color{blue}\verb+gap>+}{\color{OrangeRed}\verb+ Display( II_EW );+}
\begin{verbatim}
The associated transposed spectral sequence:

a homological spectral sequence at bidegrees
[ [ 0 .. 2 ], [ -3 .. 0 ] ]
---------
Level 0:

 * * *
 * * *
 . * *
 . . *
---------
Level 1:

 * * *
 . . .
 . . .
 . . .
---------
Level 2:

 s . .
 . . .
 . . .
 . . .

Now the spectral sequence of the bicomplex:

a homological spectral sequence at bidegrees
[ [ -3 .. 0 ], [ 0 .. 2 ] ]
---------
Level 0:

 * * * *
 . * * *
 . . * *
---------
Level 1:

 * * * *
 . * * *
 . . . *
---------
Level 2:

 * . . .
 . * . .
 . . . *
---------
Level 3:

 * . . .
 . . . .
 . . . *
---------
Level 4:

 . . . .
 . . . .
 . . . s
\end{verbatim}
}
\end{exmp}


\bigskip
An alternative title for this work could have been "Squeezing spectral sequences".


\def\cprime{$'$} \def\cprime{$'$}
\providecommand{\bysame}{\leavevmode\hbox to3em{\hrulefill}\thinspace}
\providecommand{\MR}{\relax\ifhmode\unskip\space\fi MR }
\providecommand{\MRhref}[2]{%
  \href{http://www.ams.org/mathscinet-getitem?mr=#1}{#2}
}
\providecommand{\href}[2]{#2}



\end{document}